\documentclass[11pt,reqno]{amsart}
\usepackage{amssymb}
\usepackage{amsmath}
\usepackage{mathrsfs}
\usepackage{amssymb, amsmath, amsfonts}

\usepackage{mathabx}

\usepackage{color}

\usepackage[hidelinks]{hyperref}
\allowdisplaybreaks
\usepackage[numbers,sort&compress]{natbib}

\makeatletter
\def\tank#1{\protected@xdef\@thanks{\@thanks
 \protect\footnotetext[0]{#1}}}
\def\bigfoot{

 \@footnotetext}
\makeatother

\newcommand{\ea}{\end{array}}

\allowdisplaybreaks
\numberwithin{equation}{section}

\allowdisplaybreaks

\newtheorem{theorem}{Theorem}[section]
\newtheorem{lemma}{Lemma}[section]
\newtheorem{proposition}[theorem]{Proposition}

\newtheorem{cor}[theorem]{Corollary}
\newtheorem{definition}[theorem]{Definition}

\newtheorem{rem}{Remark}[section]
\newtheorem{assumption}[theorem]{Hypothesis}

\newcommand{\R}{\mathbb{R}}
\newcommand{\lip}{\text{\rm Lip}}

\renewcommand{\d}{\mathrm{d}}

\newcommand{\e}{\mathrm{e}}

\newcommand{\E}{\mathrm{E}}

\DeclareMathOperator{\Cov}{\mathrm{Cov}}

\def\beq{\begin{equation}}
\def\nneq{\end{equation}}

\def\bthm{\begin{theorem}}
\def\nthm{\end{theorem}}

\def\blem{\begin{lemma}}
\def\nlem{\end{lemma}}
\def\bprf{\begin{proof}}
\def\nprf{\end{proof}}
\def\bprop{\begin{prop}}
\def\nprop{\end{prop}}
\def\brmk{\begin{rem}}
\def\nrmk{\end{rem}}

\def\bexa{\begin{exa}}
\def\nexa{\end{exa}}
\def\bcor{\begin{cor}}
\def\ncor{\end{cor}}

\def\e{{\varepsilon}}

\title[LDP for SPDEs with locally Lipschitz coefficients]{Large deviation principles for  SPDEs with locally Lipschitz coefficients}

\author[B. Zhang]{Beibei Zhang}
    \address[]{Beibei Zhang, Department of Mathematics and Statistics, Suzhou university of Technology, Changshu, Jiangsu, 215500,  China.}
    \email{zhangbb@whu.edu.cn}

\author[B. Qian]{Bin Qian}
    \address[]{Bin Qian, Department of Mathematics and Statistics, Suzhou university of Technology, Changshu, Jiangsu, 215500,  China.}
    \email{binqiancn@126.com}

\begin{document}
\maketitle

\begin{abstract}
	\noindent Consider the stochastic partial differential equation,
	\begin{align*}
	\partial_t u^{\varepsilon}(t\,,x) = \frac{1}{2} \partial^2_x u^{\varepsilon}(t\,,x) + b(t\,,u^{\varepsilon}(t\,,x)) + \sqrt{\varepsilon}\sigma(t\,,u^{\varepsilon}(t\,,x)) \dot{W}(t\,,x),
	\end{align*}
	where   $(t\,,x)\in(0\,,\infty)\times\mathbb{R}$, and $\dot{W}$   denotes space-time white noise.
	Foondun, Khoshnevisan, and Nualart \cite{FKN24} showed that this stochastic partial differential equation  is well-posed under the  assumptions that the initial condition $u(0)$ is bounded and measurable, while  $b$ and $\sigma$
 are locally Lipschitz continuous functions with at most linear growth. A Freidlin-Wentzell large deviation principle for the stochastic partial differential equation is established  by a weak convergence approach in this paper.
	
	 \vskip 24pt
	
	\noindent{\it Keywords:}
	Stochastic partial differential equations; Space-time white noise; Large deviation principles; Locally Lipschitz coefficients
		
	\noindent{\it \noindent AMS 2010 subject classification:}
	60H15; 60H07, 60F05.
\end{abstract}

\section{Introduction}

Consider   the following stochastic partial differential equation (SPDE for short):
\begin{equation}\label{SHE}
	\frac{\partial u^{\varepsilon}(t\,,x)}{\partial t} = \tfrac12 \frac{\partial^2 u^{\varepsilon}(t\,,x)}{\partial x}  + b(t\,,u^{\varepsilon}(t\,,x)) + \sqrt{\varepsilon}\sigma(t\,,u^{\varepsilon}(t\,,x)) \dot{W}(t\,,x),
\end{equation}
where $(t\,,x)\in(0\,,\infty)\times\mathbb{R}$, subject to $u(0\,,x) = u_0(x)$, and the forcing $\dot{W}$ is space-time white noise,
i.e., $\dot{W}$ is a  generalized Gaussian random field with mean zero and
$$\Cov [ \dot{W}(t\,,x) \,,  \dot{W}(s\,,y) ] = \delta_0(t-s) \delta_0(x-y)$$
for all $t,s\ge0$ and $x,y\in\mathbb{R}$. It is well known that Eq.\,\eqref{SHE} is well-posed when  $b$ and $\sigma$ are Lipschitz continuous in their spatial variable, uniformly in their time variable, see, e.g., \cite{Dal999, Walsh1986}. Foondun, Khoshnevisan and Nualart \cite{FKN24} extended this result to the case where the Lipschitz condition is replaced by a locally Lipschitz condition together with linear growth. In fact, Dalang and Sanz-Sol\'e \cite{DalangS2024} also established the well-posedness of  Eq.\,\eqref{SHE} for  $b$ and $\sigma$ that are locally Lipschitz in the spatial variable, uniformly in the time variable. However, their work concerns the setting $(t,x) \in (0,\infty)\times I$, where $I\subset \mathbb{R}$ is a bounded interval, and relies on the stopping-time technique. Due to the limited applicability of stopping-time arguments for  SPDEs on the real line $\mathbb{R}$, Foondun, Khoshnevisan and Nualart \cite{FKN24} employed a truncation method to establish the well-posedness of Eq.\,\eqref{SHE} on $(0,\infty)\times \mathbb{R}$. Their result assumes that the coefficients $b$ and $\sigma$ satisfy locally Lipschitz and linear growth conditions.

The purpose of this article is to establish a large deviation principle (LDP for short) for the solution $u^\e$ of Eq.\,\eqref{SHE}  as $\e\rightarrow0$. The study of LDPs is partly motivated by the following. Roughly speaking, an LDP characterizes the exponential decay of the probabilities of certain extreme or tail events, with the rate of such decay characterized by a rate function.
The LDP problems arise naturally within the framework of statistical inference theory. Large deviation techniques are powerful tools for analyzing critical phenomena in randomly perturbed dynamical systems, such as the exit time from a neighborhood of an asymptotically stable equilibrium point, the determination of exit locations, and transitions between distinct equilibrium points. These analyses hold significant importance across varied fields including statistical mechanics, quantum mechanics, and chemical reaction dynamics, among others (see, e.g., \cite{Dembo1998, FreidlinWentzell1988}).

There are many papers on LDPs for  SPDEs with Lipschitz coefficients driven by space-time white noises. In the classical paper \cite{Freidlin1988}, Freidlin studied the LDP for the small noise limit of stochastic reaction-diffusion equation, where the spatial variable is defined on a unit circle. In this setting, the drift term $b$ is assumed to satisfy  both a globally Lipschitz condition and a linear growth condition, and the diffusion coefficient $\sigma$ is a constant. Sowers \cite{Sowers1992} improved the result in \cite{Freidlin1988} to the case that $\sigma$ is globally Lipschitz. Cerrai and R\"ockner \cite{CR04} studied the LDP for SPDEs with globally Lipschitz but unbounded diffusion coefficients, while assuming the reaction terms to be only locally Lipschitz with polynomial growth.  Subsequently, numerous researchers have strived to derive the LDP results in \cite{Freidlin1988} under less and less restrictive conditions. Additional references on LDPs for SPDEs driven by the space-time white noise in bounded domains are available, with examples such as \cite{XuZhang2009}. All of the aforementioned papers are concerned with bounded domains. To the best of our knowledge, there are few papers about LDPs for SPDEs driven by the space-time white noise on unbounded domains. In their work, Li, Shang and Zhai \cite{LSZ2024} established an LDP for SPDEs driven by the space-time white noise on $\R$, under the assumption that the drift term $b$ is superlinear and satisfies a locally Log-Lipschitz condition. In contrast, the work of Mellali and Mellouk \cite{MM16} concerns a Lipschitz setting on $\mathbb{R}^d$ but deals with fractional stochastic heat equations driven by the spatially correlated noise.

 In this paper, we extend the previous framework in two key directions:  generalizing the Lipschitz condition to a locally Lipschitz condition and  extending the spatial domain from a bounded set to the  real line $\mathbb{R}$. This allows us to cover a more general class of models known as SPDEs.  This work addresses three key challenges. The first one is the
locally Lipschitz condition. The coefficients $b$ and $\sigma$  satisfy only locally Lipschitz conditions, as opposed to globally ones, which complicates the complexity. Due to the nonlinear nature of Eq.\,\eqref{SHE} and the lack of an explicit logarithmic growth structure, the  weighted norm and the framework used in \cite{LSZ2024}, which rely on such a logarithmic structure, are not applicable in this paper. The frame of  \cite{LSZ2024} does not satisfy Eq.\,\eqref{SHE}.
 The second one is the unbounded domain. Since the space variable  $x$ of Eq.\,\eqref{SHE} is on the real line  $\R$ rather than a finite interval  $I\subset \R$, the supremum of the solutions over  $\R$ explodes.
Therefore,  the usual stopping-time argument (commonly used in the study of bounded domains) could not be applied here.  To overcome this difficulty, we employ a truncation technique proposed by \cite{FKN24} to obtain some precisely estimations. The last one is the space-time white noise. As is well known, solutions to SPDEs driven by the space-time white noise are usually not semi-martingales, and thus the It\^o formula cannot be applied.

We adopt the weak convergence approach introduced in \cite{BDM2008} to establish the LDP for the solution of Eq.\,\eqref{SHE}. More precisely, the sufficient criteria presented in  \cite{MSZ} will play an important role.  The weak convergence method is primarily based on the variational representation formula for measurable functionals of Brownian motion (see, e.g., \cite{BCD2013, BD, BD2019, BDM2008, BDM2011, DE1997}). This approach has yielded numerous LDP results, see, e.g., \cite{L, RZ2008, XuZhang2009, LTZ, XZ2018} and the references therein. To obtain the LDP, we first establish the well-posedness of the so-called skeleton equations. Secondly, we show the strong continuity of deterministic skeleton equations with respect to controls under the uniform norm. Finally, we demonstrate the convergence to zero in probability of differences between   the stochastic controlled  equations and the random skeleton equations, as the noise intensity tends to zero.  In our proof, we use a truncation method together with moment bounds and tail estimates of the truncated solution proposed by \cite{FKN24}.

Before presenting our main results, we first state the key hypotheses.
\begin{assumption}\label{cond-initial}
	$u_0:\R\to\R$ is non-random, bounded, and measurable.
\end{assumption}

\begin{assumption}\label{cond-dif}
	The functions $b:(0\,,\infty)\times\R \rightarrow \R$ and
	$\sigma:(0\,,\infty)\times\R \rightarrow \R$ are locally
	Lipschitz continuous in their space variable with at  most linear growth,
	uniformly in their time variable.
	In other words, \textcolor{black}{$0<\lip_n(b), \lip_n(\sigma) <\infty$
	and $0 \leq  L_b, L_{\sigma} <\infty,$}
	for all real numbers $n>0$ where\textcolor{black}{, for every space-time function $\psi$,
	\begin{equation}\label{L:lip}\begin{split}
		L_\psi &=  \sup_{t>0}\sup_{x\in\R} \frac{|\psi(t\,,x)|}{1+|x|},\
			\lip_n(\psi) =  \sup_{t>0}\sup_{\substack{x,y\in[-n,n]\\
			x\neq y}}  \frac{|\psi(t\,,x)-\psi(t\,,y)|}{|y-x|}.
	\end{split}\end{equation}}
\end{assumption}
We will use a truncation argument proposed in \cite{FKN24} in this paper.  For every real number $N>0$, two functions $b_N:(0,\infty)\times \mathbb{R}\rightarrow \mathbb{R}$ and $\sigma_N:(0,\infty)\times \mathbb{R}\rightarrow \mathbb{R}$ as follows: for all $t>0$ and $\psi:(0,\infty)\times \mathbb{R}\rightarrow \mathbb{R}$, let
\begin{align}\label{Def_psi_N}
\psi_N(t,x)=\psi(t,x){\bf{1}}_{\{-e^N\leq x \leq e^N\}}+\psi(t,e^N){\bf{1}}_{\{x > e^N\}}+\psi(t,-e^N){\bf{1}}_{\{x < -e^N\}}.
\end{align}
We will need the following assumption on the Lipschitz coefficients of $b_N$ and $\sigma_N$. Define
\begin{align}\label{Define_LNb}
L_{N,b}=\text{Lip}_{\exp(N)}(b),\quad\quad L_{N,\sigma}=\text{Lip}_{\exp(N)}(\sigma).
\end{align}
\begin{assumption}\label{3}
If $L_\sigma>0$, then we assume that
\begin{align}\label{new15}
L_{N,\sigma}=o\left(N^{3/8}\right)\,\,\text{and}\,\,L_{N,b}/L^4_{N,\sigma}=\mathcal{O}(1)\,\,as\,\,N\rightarrow\infty.
\end{align}
If $\sigma$ is bounded, then we assume that
\begin{align}\label{new16}
L_{N,\sigma}=o\left(e^{N/2}\right)\,\,\text{and}\,\,L_{N,b}/L^4_{N,\sigma}=\mathcal{O}(1)\,\,as\,\,N\rightarrow\infty.
\end{align}
\end{assumption}
\begin{theorem}\label{th:exists}(\cite[Theorem 3]{FKN24})
	Under Hypotheses \ref{cond-initial}, \ref{cond-dif}  and  \ref{3},
	Eq.\,\eqref{SHE} has a unique random-field solution that satisfies the following:
	\begin{align*}
		\sup_{t \in [0,T]}\sup_{x\in \R}\mathbb{E}\left( | u^{\varepsilon}(t\,,x)|^k\right) < \infty
		\quad\text{for all $T>0$ and $k\ge1$}.
	\end{align*}
\end{theorem}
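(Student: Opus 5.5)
This is Theorem 3 of \cite{FKN24}, so the plan is to reproduce their truncation argument. The factor $\sqrt{\varepsilon}$ only rescales $\sigma$, and $\sqrt{\varepsilon}\,\sigma$ satisfies Hypotheses \ref{cond-dif} and \ref{3} with constants no larger than those of $\sigma$. It therefore suffices to treat a single coefficient pair $(b,\sigma)$.

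\textbf{Step 1: the truncated equations.} For each $N$, replace $(b,\sigma)$ by $(b_N,\sigma_N)$ from \eqref{Def_psi_N}. These are globally Lipschitz in the space variable, with constants $L_{N,b}$ and $L_{N,\sigma}$ from \eqref{Define_LNb}. They also have linear growth with constants $L_b$ and $L_\sigma$, independent of $N$. Classical Walsh--Dalang theory \cite{Dal999, Walsh1986} then gives a unique mild solution $u_N$ with $\sup_{t\le T}\sup_x \mathbb{E}|u_N(t,x)|^k<\infty$.

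Because the growth constants do not depend on $N$, a Picard/Gronwall argument with the Burkholder--Davis--Gundy inequality and $\int_0^t\|p_{t-s}\|_{L^2}^2\,ds\lesssim\sqrt t$ gives the key uniform bound
$$\sup_N \sup_{t\le T}\sup_x \mathbb{E}|u_N(t,x)|^k \le C_{k,T}.$$

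\textbf{Step 2: stability between levels.} Fix $N<M$. Since $\psi_N=\psi_M$ on $[-e^N,e^N]$, split
$$\sigma_M(u_M)-\sigma_N(u_N)=\bigl[\sigma_M(u_M)-\sigma_M(u_N)\bigr]+\bigl[\sigma_M(u_N)-\sigma_N(u_N)\bigr],$$
and likewise for $b$. The first bracket is bounded by $L_{M,\sigma}|u_M-u_N|$. The second vanishes unless $|u_N|>e^N$, and on that event it is at most $C(1+|u_N|)$. By the uniform moments and Chebyshev, $\sup_{t,x}\mathbb{P}(|u_N(t,x)|>e^N)\le C_k e^{-kN}$.

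A BDG/Gronwall estimate in the weighted norm $\sup_{t,x}e^{-\beta t}\|\cdot\|_{L^2(\Omega)}$ then yields
$$\sup_{t\le T,x}\mathbb{E}|u_M-u_N|^2\le C\exp\bigl(cL_{M,\sigma}^4T+cL_{M,b}T\bigr)\,e^{-cN}.$$
The factor $L_{M,\sigma}^4$ comes from the heat-kernel Gronwall for space-time white noise, which behaves like $\exp(C\,\mathrm{Lip}^4 t)$. The condition $L_{N,b}/L^4_{N,\sigma}=\mathcal{O}(1)$ lets the drift exponent be absorbed into the $\sigma$ exponent.

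\textbf{Step 3: Cauchy property and passage to the limit.} Take $M=N+1$ and sum over $N$. The growth hypotheses in \ref{3} are used here: $L_{N,\sigma}=o(N^{3/8})$ makes the exponent grow more slowly than the $e^{-cN}$ gain when $L_\sigma>0$. In the bounded-$\sigma$ case, $|u_N|$ has Gaussian-type tails $\exp(-ce^{2N})$, which beat $\exp(CL_{N,\sigma}^4)$ under \eqref{new16}. Either way $(u_N)$ is Cauchy in $L^2(\Omega)$ uniformly in $(t,x)\in[0,T]\times\R$.

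Let $u$ be the limit. It inherits the moment bound, and it solves \eqref{SHE} by passing to the limit in the mild formulation. For the noise term, use the local Lipschitz bound on $\{|u|,|u_N|\le n\}$ together with the tail estimates off that event.

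\textbf{Step 4: uniqueness.} If $v$ is another solution with bounded moments, compare $v$ with $u_N$ exactly as in Step 2: $v$ solves the $N$-truncated equation except on $\{|v|>e^N\}$, whose probability is tiny. This gives $u_N\to v$, so $v=u$.

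The main obstacle is the bookkeeping in Step 2. One must track the exponential constant $\exp(CL_{N,\sigma}^4T)$ exactly against the tail decay, which is precisely where the $N^{3/8}$ threshold arises. I would first prove the Gronwall lemma for the space-time white-noise kernel with explicit $\mathrm{Lip}^4$ dependence, and only then tune the tail estimates (moment-optimised Chebyshev with $k\sim N$) to close the sum.
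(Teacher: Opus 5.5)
\textbf{Overall assessment.} Your scheme is the argument of \cite{FKN24} that the paper reproduces in Lemmas \ref{lemma_bound1}--\ref{lemma_tail_EST} and in the proof of Lemma \ref{lemma_EST_uhe}. Its ingredients are truncation, $N$-uniform moment bounds depending only on $L_b,L_\sigma$, a split into a Lipschitz part and a part supported on a tail event, a weighted-norm contraction with $\alpha\asymp L^4_{N,\sigma}$, and summation over $N$. However, the quantitative step that decides convergence is wrong when $L_\sigma>0$.

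\textbf{The gap.} You bound the tail by fixed-$k$ Chebyshev, $\mathbb{P}(|u_N(t,x)|>e^N)\le C_ke^{-kN}$. You then assert that $L_{N,\sigma}=o(N^{3/8})$ makes $\exp(cL^4_{N+1,\sigma}T)$ lose against this. It does not: \eqref{new15} only gives $L^4_{N,\sigma}=o(N^{3/2})$. For instance $L_{N,\sigma}=N^{1/3}$ is admissible, and then $cL^4_{N+1,\sigma}T-kN\to+\infty$ for every fixed $k$. As written, Steps 2--4 prove the theorem only under the stronger assumption $L^4_{N,\sigma}=O(N)$. Your closing suggestion $k\sim N$ does not rescue this. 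The available bound on $\mathbb{E}|u_N|^k$ is of order $e^{Ck^3t}$, and $-kN+Ck^3t\to+\infty$ when $k\sim N$.

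\textbf{What is missing.} You need the sharp tail estimate $\mathbb{P}(|u_N(t,x)|\ge e^N)\le\exp(-cN^{3/2}/\sqrt t)$ for $N$ large; this is Lemma \ref{lemma_tail_EST} with $h_\varepsilon\equiv0$. To get it, Step 1 cannot stop at a non-explicit constant $C_{k,T}$. Use the asymptotically optimal BDG constant $\sqrt{4k}$ and the norm $\mathcal{N}_{k,\alpha}$ with $\alpha\asymp k^2L^4_\sigma+L_b$. This gives $\|u_N(t,x)\|_k\le Ce^{Ck^2t}$ uniformly in $N$ and $x$, hence $\mathbb{E}|u_N(t,x)|^k\le C^ke^{Ck^3t}$. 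Chebyshev with $k\asymp\sqrt{N/t}$ then yields the $N^{3/2}$ rate. A BDG constant of order $k$ would give only $N^{5/4}$, which is not enough for the exponent $3/8$.

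\textbf{Closing the contraction.} Run the contraction at a fixed moment order. The tail then enters through $\|X\mathbf{1}_F\|_k\le\|X\|_{2k}\,\mathbb{P}(F)^{1/(2k)}$, which only changes constants. You obtain
\[
\sup_{t\le T,x}\|u_{N+1}(t,x)-u_N(t,x)\|_k\lesssim\exp\bigl(CL^4_{N,\sigma}T-c'N^{3/2}/\sqrt T\bigr),
\]
which is summable precisely because $4\cdot\tfrac38=\tfrac32$. The paper sharpens this with Lemma \ref{lemma7} and \eqref{nwe312}.

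\textbf{Uniqueness and the bounded case.} The same a priori computation applies to any solution $v$ with $\sup_{t\le T,x}\mathbb{E}|v(t,x)|^k<\infty$, since it uses only $L_b$, $L_\sigma$ and this finiteness. So $v$ has the same $N^{3/2}$ tail, and your comparison with $u_N$ in Step 4 then closes. Your bounded-$\sigma$ case, with tails $\exp(-ce^{2N})$ against $L^4_{N,\sigma}=o(e^{2N})$ from \eqref{new16}, is consistent as stated.
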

Morever, The following Theorem \ref{theorem_regular} gives the regularity property of the solution $\{u^ \varepsilon\}_{\varepsilon>0}$ to Eq.\,\eqref{SHE}. More precisely, the trajectories of the solution are $\beta=(\beta_1,\beta_2)$-H\"older continuous in $(t,x)\in[0,T]\times K$ for every $\beta_1\in(0,\frac{1}{4})$, $\beta_2\in(0,\frac{1}{2})$ and every $K$ compact subset of $\R$. Consequently, the random field solution $\{u^{\e}(t,x)\}_{(t,x)\in[0,T]\times K}$ to Eq.\,\eqref{SHE} lives in the H\"older space $C^{\beta}([0,T]\times K,\R)$ equipped with the norm defined by \eqref{eq norm}.

\begin{theorem}\label{theorem_regular}
Assume that Hypotheses \ref{cond-initial}, \ref{cond-dif}  and  \ref{3} hold and  let $\{u^ \varepsilon\}_{\varepsilon>0}$ be a solution to Eq.\,\eqref{SHE}. Then $\{u^ \varepsilon\}_{\varepsilon>0}$ belongs  the H\"older space $C^{\beta}([0,T]\times K,\R)$ (defined in Section \ref{section2_1} below) for every $\beta=(\beta_1,\beta_2)$,  $\beta_1\in(0,\frac{1}{4})$, $\beta_2\in(0,\frac{1}{2})$, and every $K$ compact subset of $\R$.
\end{theorem}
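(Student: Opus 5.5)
We obtain the H\"older regularity of $u^\e$ from the Kolmogorov continuity theorem applied to the mild formulation. For fixed $\e>0$ we write
\begin{align*}
u^\e(t,x)&=(p_t*u_0)(x)+\int_0^t\int_\R p_{t-s}(x-y)\,b(s,u^\e(s,y))\,\d y\,\d s\\
&\quad+\sqrt{\e}\int_0^t\int_\R p_{t-s}(x-y)\,\sigma(s,u^\e(s,y))\,W(\d s\,\d y)
=:I_0+I_1+I_2,
\end{align*}
where $p$ is the heat kernel of $\tfrac12\partial_x^2$. Since the solution is unique, we may work directly with this representation.

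The first step is to control the growth of the integrands. By the linear growth bound $|\psi(t,x)|\le L_\psi(1+|x|)$ in Hypothesis \ref{cond-dif} and by Theorem \ref{th:exists}, we have
\[
\sup_{s\le T}\sup_{y\in\R}\mathbb{E}\bigl|b(s,u^\e(s,y))\bigr|^k+\sup_{s\le T}\sup_{y\in\R}\mathbb{E}\bigl|\sigma(s,u^\e(s,y))\bigr|^k<\infty
\]
for every $k$. The important point is that only this linear growth bound and the uniform moment bound are used here. The local Lipschitz constants and Hypothesis \ref{3} enter only through existence and the moment bound of Theorem \ref{th:exists}.

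The second step is the increment estimates. For the stochastic term we use the Burkholder--Davis--Gundy inequality followed by Minkowski's inequality. For $(t,x),(s,z)\in[0,T]\times K$ this gives
\[
\mathbb{E}|I_2(t,x)-I_2(s,z)|^k\le C_k\,\e^{k/2}\Bigl(\int_0^T\!\!\int_\R \bigl[G(r,y)\bigr]^2\,\d y\,\d r\Bigr)^{k/2},
\]
where $G$ is the difference of the two space-time heat kernels. The classical Walsh bounds then give $|t-s|^{k/4}+|x-z|^{k/2}$. The drift term $I_1$ is handled the same way using $L^1$ kernel differences, and it is smoother than $I_2$.

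The main obstacle is $I_0$. Because $u_0$ is only bounded and measurable, $p_t*u_0$ is not H\"older continuous up to $t=0$. What we do have is that $p_t*u_0$ is smooth for $t>0$, with
\[
|\partial_x(p_t*u_0)|\le C\|u_0\|_\infty t^{-1/2},\qquad |\partial_t(p_t*u_0)|\le C\|u_0\|_\infty t^{-1}.
\]
I would therefore prove the H\"older statement on $[t_0,T]\times K$ for every $t_0>0$, where $I_0$ is deterministic and Lipschitz. Alternatively, I would check whether the norm \eqref{eq norm} of Section \ref{section2_1} is weighted near $t=0$ or whether $u_0$ is implicitly assumed H\"older continuous. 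If it is, then $I_0\in C^{\beta}$ directly by the standard heat-semigroup estimates.

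The last step is to combine the pieces. We obtain
\[
\mathbb{E}|u^\e(t,x)-u^\e(s,z)|^k\le C\bigl(|t-s|^{k/4}+|x-z|^{k/2}\bigr)
\]
for all large $k$. Kolmogorov's theorem in the anisotropic form, with dimension $2$, then gives a modification whose paths are $(\beta_1,\beta_2)$-H\"older on $[0,T]\times K$ for all $\beta_1<\tfrac14-\tfrac{2}{k}$ and $\beta_2<\tfrac12-\tfrac{2}{k}$. Letting $k\to\infty$ yields every $\beta_1\in(0,\tfrac14)$ and $\beta_2\in(0,\tfrac12)$, which is the statement of Theorem \ref{theorem_regular}.
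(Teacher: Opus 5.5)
Your skeleton is the paper's: the mild form, $L^k$ increment bounds that use only linear growth and the uniform moments of Theorem \ref{th:exists}, BDG plus the kernel bounds of Lemma \ref{lem holder} for the noise, and then Kolmogorov. You differ on the drift. You bound it directly by Minkowski, using $L^1$ kernel differences against $\sup_{s\le T,y}\|b(s,u^\e(s,y))\|_k$. The paper instead defers to Step 2 of Proposition \ref{Propo C2}, which uses a change of variables, local Lipschitz bounds on the events $A_\e(s,y)$, and Gronwall. Your version is simpler and needs no Lipschitz information at all. Your exponents $|t-s|^{k/4}+|x-z|^{k/2}$ are also the correct ones; \eqref{ueEst_theincrements} in the paper has them interchanged. (Minor: under the metric $|t-s|^{1/4}+|x-z|^{1/2}$ the parameter set has dimension $6$, so the Kolmogorov loss is $\frac{3}{2k}$ in time and $\frac{3}{k}$ in space rather than $\frac{2}{k}$. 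This disappears as $k\to\infty$.)

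The real issue is the one you raise about $I_0$, and you need to resolve it rather than hedge. Your last paragraph claims the increment bound on all of $[0,T]\times K$, and that claim is false. The norm \eqref{eq norm} is unweighted, and Hypothesis \ref{cond-initial} gives only a bounded measurable $u_0$, so neither of your escape routes applies. For a concrete counterexample, take $u_0=\mathbf 1_{[0,\infty)}$, any admissible $b,\sigma$, and $K\supset[-1,1]$. Then $(p_t*u_0)(x)=\Phi(x/\sqrt t)$, where $\Phi$ is the standard normal distribution function. Your own estimates give a continuous version of $I_1+I_2$ on $[0,T]\times K$ that vanishes at $t=0$. Hence $u^\e(t,\sqrt t)-u^\e(t,-\sqrt t)\to\Phi(1)-\Phi(-1)\neq0$ as $t\downarrow0$, and no version of $u^\e$ lies in $C^\beta([0,T]\times K)$.

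So the statement as written cannot be proved. Your argument does prove the following corrected forms:
\begin{itemize}
\item $u^\e-p_\cdot*u_0\in C^\beta([0,T]\times K)$;
\item hence $u^\e\in C^\beta([t_0,T]\times K)$ for each $t_0>0$, by your derivative bounds on $I_0$;
\item $u^\e\in C^\beta([0,T]\times K)$ under the extra assumption that $u_0$ is $\frac12$-H\"older, since then $I_0$ is $(\frac14,\frac12)$-H\"older on $[0,T]\times\R$.
\end{itemize}
The paper's proof overlooks this. Its decomposition into $D_1^\e+D_2^\e$ silently drops $(p_{t'}*u_0)(x')-(p_t*u_0)(x)$. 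The Gronwall step it imports from Proposition \ref{Propo C2} would also fail, because $\sup_y|(p_{s+h}*u_0)(y+\delta)-(p_s*u_0)(y)|$ is not small uniformly for $s$ near $0$. State your result in one of the corrected forms and make the final step consistent with it.
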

The proof of Theorem \ref{theorem_regular} is inspired by the proof of Theorem 3.1 in \cite{BEM10} and is provided in the Appendix.

Now, we present the main result of this paper.
\begin{theorem}\label{221107.1426}  Assume that Hypothesis \ref{cond-initial}, \ref{cond-dif}  and  \ref{3} hold. The solution
 $\{u^ \varepsilon\}_{\varepsilon>0}$ to Eq.\,\eqref{SHE} satisfies an LDP on $\mathcal E^{\beta}([0,T]\times K, \mathbb R)$ (defined by \eqref{DEF_mathcalE}) as $\varepsilon$ tends to 0 with the rate function $I$ given by \eqref{ratef}.
\end{theorem}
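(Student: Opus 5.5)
We follow the weak convergence approach of \cite{BDM2008}, in the form of the sufficient criterion of \cite{MSZ}. Let $H=L^2([0,T]\times\R)$ be the Cameron--Martin space of $W$. For $N>0$ put $S_N=\{h\in H:\|h\|_H^2\le N\}$, endowed with the weak topology, and let $\mathcal A_N$ denote the predictable processes with values in $S_N$ a.s. By Theorem \ref{th:exists} and Theorem \ref{theorem_regular} there is a measurable map $\mathcal G^\e$ such that $u^\e=\mathcal G^\e(\sqrt{\e}W)$ in $\mathcal E^\beta([0,T]\times K,\R)$.

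For $h\in H$, the skeleton equation is
\begin{align*}
u^h(t,x)=(p_t*u_0)(x)+\int_0^t\!\!\int_\R p_{t-s}(x-y)\big[b(s,u^h(s,y))+\sigma(s,u^h(s,y))h(s,y)\big]\,\d y\,\d s,
\end{align*}
with $p$ the heat kernel, and we define $\mathcal G^0(\int h):=u^h$. The rate function is then $I(f)=\inf\{\tfrac12\|h\|_H^2:f=u^h\}$. The criterion of \cite{MSZ} requires two conditions:
\begin{itemize}
\item[(a)] for every $N$, the map $S_N\ni h\mapsto u^h\in\mathcal E^\beta$ is continuous, so that the sublevel sets of $I$ are compact;
\item[(b)] for every $N$ and $\{h^\e\}\subset\mathcal A_N$, the difference $\mathcal G^\e(\sqrt\e W+\int h^\e)-\mathcal G^0(\int h^\e)\to0$ in probability in $\mathcal E^\beta$.
\end{itemize}

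\textbf{Well-posedness of the skeleton equation.} Because the equation is deterministic and $\|h\|_H\le\sqrt N$, a Picard iteration gives a uniform bound. Using linear growth and Cauchy--Schwarz,
\[
\int_0^t\!\!\int p_{t-s}(x-y)|\sigma|\,|h|\le\Big(\int_0^t\!\!\int p^2_{t-s}(x-y)(1+|u|)^2\Big)^{1/2}\|h\|_H,
\]
and $\int p_{t-s}^2\,\d y\lesssim(t-s)^{-1/2}$. A fractional Gronwall argument then yields $\sup_{t\le T,x\in\R}|u^h(t,x)|\le C(N,T,\|u_0\|_\infty)$. Since this bound is uniform in $x$, the local Lipschitz constants $\lip_M$ with $M=C(N,T,\|u_0\|_\infty)$ are all that is needed. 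Uniqueness and existence therefore follow by first truncating $b,\sigma$ to $b_M,\sigma_M$ as in \eqref{Def_psi_N}, which are globally Lipschitz, and then removing the truncation a posteriori. The same bounds combined with heat-kernel increment estimates give H\"older regularity of $u^h$ and equicontinuity over $h\in S_N$ in the $C^\beta$ norm, with exponents slightly larger than $\beta$, which is what the definition of $\mathcal E^\beta$ requires.

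\textbf{Condition (a).} Let $h_n\to h$ weakly in $S_N$. The family $\{u^{h_n}\}$ is precompact in $\mathcal E^\beta$ by the equicontinuity just established and Arzel\`a--Ascoli, so it suffices to identify every limit. Write
\[
u^{h_n}-u^h=\big[\text{terms with }u^{h_n}-u^h\big]+\int\!\!\int p_{t-s}(x-y)\,\sigma(s,u^h(s,y))\,(h_n-h)(s,y)\,\d y\,\d s.
\]
The last term tends to $0$ pointwise by weak convergence. It also tends to $0$ uniformly on $[0,T]\times K$, since it is compact as a linear map of $h_n-h$ and equicontinuous. The remaining terms are controlled by the Lipschitz constant on the ball of radius $C(N)$, and Gronwall closes the estimate. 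A global sup over $x\in\R$ is not needed, because only weighted or local uniform control on $K$ is required. Where tails in $y$ enter, we handle them with the uniform bound together with the Gaussian decay of $p$.

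\textbf{Condition (b), the main obstacle.} By Girsanov, $v^\e:=\mathcal G^\e(\sqrt\e W+\int h^\e)$ solves the controlled SPDE, which has drift $\sigma(v^\e)h^\e$ and noise $\sqrt\e\,\sigma(v^\e)\dot W$. Here $v^\e$ is unbounded in $x$, so stopping times are unavailable. We follow \cite{FKN24} and introduce the truncated equation with coefficients $b_{N'},\sigma_{N'}$, whose solution $v^\e_{N'}$ satisfies
\[
\sup_{t,x}\mathbb E|v^\e_{N'}-u^{h^\e}|^k\le C_k\,\e^{k/2}e^{C(L_{N',b}+L_{N',\sigma}^4)T}.
\]
This follows from BDG, the fact that $u^{h^\e}$ is bounded by $C(N)$, and Gronwall with the global Lipschitz constants of the truncation. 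Next we use the FKN tail estimate
\[
\sup\mathbb P\big(|v^\e(t,x)|>e^{N'}\big)\to0,
\]
uniformly in $\e\le1$ and $h^\e\in\mathcal A_N$. Its proof needs moment bounds for $v^\e$ that are uniform in the control, obtained again by Cauchy--Schwarz on the control term, and it is here that Hypothesis \ref{3} is used. With this estimate, $v^\e=v^\e_{N'}$ on $[0,T]\times K$ outside an event of small probability; the localization is handled through Kolmogorov-type moduli on the compact $K$. We then choose $N'$ first, so that the tail probability is small, and let $\e\to0$ afterwards. This gives convergence in probability pointwise. Upgrading it to $\mathcal E^\beta$ requires uniform-in-$\e$ H\"older moment bounds for $v^\e_{N'}-u^{h^\e}$, obtained as in the proof of Theorem \ref{theorem_regular}, followed by interpolation between the sup norm and a slightly higher H\"older norm. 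The hard part is making the tail estimate uniform over controls while balancing the growth of $e^{CL_{N',\sigma}^4}$ against $\e$, which is precisely the role of Hypothesis \ref{3}.
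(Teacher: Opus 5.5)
\paragraph{Overall.} Your architecture is the paper's. It uses the weak-convergence criterion, gets well-posedness of the skeleton from an a priori bound $\sup_{t\le T,x\in\R}|u^h|\le C(N)$ plus local Lipschitz constants on a ball, and then proves (a) and (b) using the Foondun--Khoshnevisan--Nualart truncation. Your treatment of (a) is sound in outline. Your insistence on weighted or local-in-$x$ control there is more careful than the global $\sup_{x\in\R}$ Gronwall in the proof of Proposition~\ref{thm Skeleton}. Weak convergence does not make the $(h_n-h)$-term vanish uniformly on $\R$: translate a fixed bump to spatial infinity.

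\paragraph{The gap in (b).} The genuine gap is the sentence ``$v^\varepsilon=v^\varepsilon_{N'}$ on $[0,T]\times K$ outside an event of small probability''. This does not follow from the pointwise tail bound, and it is false in general on $\R$. The mild equation is nonlocal: $v^\varepsilon(t,x)$ with $x\in K$ involves $b(s,v^\varepsilon(s,y))$ and $\sigma(s,v^\varepsilon(s,y))$ for every $y\in\R$. Identifying the truncated and untruncated solutions pathwise on $[0,T]\times K$ would therefore need the truncation to be inactive on the whole domain of dependence, which for the heat kernel is $[0,t]\times\R$. That event typically has probability zero; for instance, when $\sigma$ is bounded away from zero, $\sup_{y\in\R}|v^\varepsilon(s,y)|=\infty$ a.s. This is exactly the obstruction to stopping times that you point out yourself, and Kolmogorov control of $v^\varepsilon$ on $[0,T]\times K$ does not remove it. Relatedly, Hypothesis~\ref{3} is not what balances $e^{CL_{N',\sigma}^4T}$ against $\varepsilon$. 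With $N'$ fixed before $\varepsilon\to0$, your bound $\sqrt{\varepsilon}\,e^{C(L_{N',b}+L_{N',\sigma}^4)T}$ tends to zero with no condition at all.

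\paragraph{Repair via $L^k$ comparison.} The localization has to be quantitative, in $L^k$, rather than pathwise. One repair is to prove
\[
\lim_{N'\to\infty}\sup_{\varepsilon\in(0,1]}\sup_{h^\varepsilon\in\mathcal A_N}\sup_{t\le T,\,x\in\R}\|v^\varepsilon(t,x)-v^\varepsilon_{N'}(t,x)\|_k=0 .
\]
Write $b(s,v^\varepsilon)-b_{N'}(s,v^\varepsilon_{N'})$ (and likewise for $\sigma$) as a term bounded by $L_{N',b}|v^\varepsilon-v^\varepsilon_{N'}|$, plus a term supported on $\{|v^\varepsilon|>e^{N'}\}$. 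By the tail bound, the second term has $L^k$ norm of order $\exp(-c(N')^{3/2}/k)$, uniformly in $\varepsilon\le1$ and in the control. Gronwall then gives a bound of order $\exp\bigl(C(L_{N',b}+L_{N',\sigma}^4)T-c(N')^{3/2}/k\bigr)$. This is exactly where Hypothesis~\ref{3} is used when $L_\sigma>0$; the bounded-$\sigma$ case is analogous, with tail $\exp(-ce^{2N'})$.

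\paragraph{A simpler repair.} You can avoid truncation in (b) entirely. Since $|u^{h^\varepsilon}|\le C(N)$ deterministically, for every $v\in\R$,
\[
|b(t,v)-b(t,u^{h^\varepsilon})|\le K|v-u^{h^\varepsilon}|,\qquad K=\max\{\lip_{C(N)+1}(b),L_b(3+2C(N))\},
\]
because $|v|>C(N)+1$ forces $|v-u^{h^\varepsilon}|>1$, after which linear growth suffices; the same holds for $\sigma$. The Burkholder--Davis--Gundy inequality, Cauchy--Schwarz on the control term and a fractional Gronwall lemma then give $\sup_{t\le T,x\in\R}\|v^\varepsilon(t,x)-u^{h^\varepsilon}(t,x)\|_k\le C\sqrt{\varepsilon}$. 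This uses only the uniform moment bounds of Lemma~\ref{lemma_EST_uhe}. It is also the clean way to run the paper's direct comparison in Proposition~\ref{Propo C2}, where the cutoff $e^{1/\varepsilon}$ makes the local Lipschitz constant depend on $\varepsilon$.

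In either repair, the H\"older upgrade must be done for $v^\varepsilon-u^{h^\varepsilon}$ itself. It needs increment bounds for $v^\varepsilon$ that are uniform in $\varepsilon$ and in the control, and cannot be transferred from $v^\varepsilon_{N'}$.
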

We give the proof of Theorem \ref{221107.1426} in Section 2 below.

The rest of this paper is organized as follows. In Section 2, we first introduce some necessary concepts, then give the outline of the proof of the main result by the weak convergence method. In Section 3, moment estimates of the solution to the stochastic controlled equation are given. In Section 4, we prove the existence and uniqueness of the solution to the skeleton equation. Section 5 is devoted to verifying the two conditions required for the weak convergence criterion. Finally, some useful lemmas and the proof of Theorem \ref{theorem_regular} are provided in the Appendix.

\section{Preliminaries and statement of the main result}
This section is divided into two parts, In Section \ref{section2_1}, we give some necessary concepts. In Section \ref{section2_2}, we give the outline of the proof  of the main result.
\subsection{Some concepts}\label{section2_1}
Recall that a  solution to Eq.\,\eqref{SHE} is a predictable random field  $ u^{\varepsilon}=\{ u^{\varepsilon}(t\,,x)\}_{t \geq 0, x \in \R}$ that satisfies the following integral equation:
\begin{equation}\label{mild_SHE}
\begin{split}
	 u^{\varepsilon}(t\,,x) =&\,  (p_t*u_0)(x) + \int_0^t\int_{\R} p_{t-s}(y-x) b\left(s\,, u^{\varepsilon}(s\,,y)\right)\,\d s\d y\\
	& +  \sqrt{\varepsilon}\int_0^t\int_{\R} p_{t-s}(y-x) \sigma\left(s\,, u^{\varepsilon}(s\,,y)\right)\,W(\d s\d y),
 \end{split}
\end{equation}
where the symbol ``$*$'' denotes convolution  and
\begin{equation*}
	p_r(z) =
	\frac{ 1}{\sqrt{2\pi r}} \exp\left\{-\frac{z^2}{2r}\right\}\qquad\text{for all $r>0$ and $z\in\R$}.
\end{equation*}

Throughout this paper, we write
\textcolor{black}{$\|X\|_p = \left[ \E(|X|^p)\right]^{1/p}$
for all $p\ge1$ and $X\in L^p(\Omega)$.}
For every space-time function $f:(0\,,\infty)\times\R\to\R$,
$\lip(f)$ denotes the optimal Lipschitz constant of $f$,  i.e.,
\begin{equation}\label{Lip(f)}
	\lip(f) =
	\textcolor{black}{\sup_{t>0}\sup_{\substack{a,b\in\R\\a\neq b}}\frac{|f(t\,,b)-f(t\,,a)|}{|b-a|}.}
\end{equation}
In particular, $f$ is globally Lipschitz continuous in $x$, uniformly in $t$, if and only if $\lip(f)<\infty$.
If $f$ depends only on a spatial variable $x$, then $\lip(f)$ still makes sense provided that we extend $f$
to a space-time function as follows $f(t\,,x) = f(x)$, in the usual way.

 For any $T>0, K\subset\mathbb R, \beta=(\beta_1,\beta_2)$, let $C^{\beta}([0,T]\times K,\mathbb R)$  be the H\"older space equipped with the norm defined by
\begin{equation}\label{eq norm}
\|f\|_{\beta,K}:=\sup_{(t,x)\in[0,t]\times K}|f(t,x)|+\sup_{s\neq t\in [0,T]}\sup_{x\neq y\in K}\frac{|f(t,x)-f(s,y)|}{|t-s|^{\beta_1}+|x-y|^{\beta_2}}.
\end{equation}
Let $$C_\beta:=\left\{f\in\R,\|f\|_{\beta,K}<\infty\right\}.$$
The set $C_\beta$ endowed with the metric $d(f,g):=\|f-g\|_{\beta,K}$. Since $\mathcal C^{\beta}([0,T]\times K, \mathbb R)$ is not separable, we consider the space $C^{\beta',0}([0,T]\times K,\mathbb R)$ of H\"older continuous functions $f$ with the degree $\beta'_i<\beta_i, i=1,2$, such that
$$
\lim_{\delta\rightarrow0^+}\left(\sup_{|t-s|+|x-y|<\delta}\frac{|f(t,x)-f(s,y)|}{|t-s|^{\beta_1'}+|x-y|^{\beta_2'}} \right)=0,
$$
and $C^{\beta',0}([0,T]\times K,\mathbb R)$ is a Polish space containing $\mathcal C^{\beta}([0,T]\times K, \mathbb R)$.
From now on, let
\begin{align}\label{DEF_mathcalE}
\mathcal E^{\beta}([0,T]\times K, \mathbb R):=C^{\beta,0}([0,T]\times K, \mathbb R),
\end{align} where $\beta=(\beta_1,\beta_2)$.
 \subsection{Outline of the proof of the  main result }\label{section2_2}

  The aim of this paper is to establish an LDP for the solutions $\{u^\varepsilon\}_{\varepsilon> 0}$ to  Eq.\,\eqref{SHE} as $ \varepsilon$ tends to 0. To this end, we first recall the definition of the LDP.
 Let $\{X^{\e}\}_{\e>0}$ be a family of random variables defined on a probability space $(\Omega, \mathcal{F}, \mathbb{P})$ and taking values in a Polish space $\mathcal{E}$. We denote by $\mathbb{E}$ the expectation with respect to $\mathbb{P}$. The LDP for the family $\{X^{\e}\}_{\e>0}$ is concerned with events $A$ for which probabilities  $\mathbb{P}(X^{\e}\in A)$ converge to zero exponentially fast as $\e \rightarrow 0$. The exponential decay rate of such probabilities are typically expressed in terms of a ``rate function" $I$ mapping $\mathcal{E}$ into $[0,\infty]$.

\begin{definition}
	(Rate function) A function $I(\cdot):\mathcal E^{\beta}([0,T]\times K, \mathbb R)\to [0,\infty]$ is called a rate function if for every constant $M<\infty$, $\{z\in \mathcal E^{\beta}([0,T]\times K, \mathbb R):I(z)\le M\}$ is a compact subset of $\mathcal E^{\beta}([0,T]\times K, \mathbb R)$.
\end{definition}

\begin{definition}
	(Large deviation principle) The solution $\{u^\varepsilon\}_{\varepsilon> 0}$ to  Eq.\,\eqref{SHE} is said to satisfy an LDP on $\mathcal E^{\beta}([0,T]\times K,\mathbb R)$ with the rate function $I$, if the following two conditions hold:
	
	(a) (Upper bound) For each closed subset $F$ of $\mathcal E^{\beta}([0,T]\times K, \mathbb R)$,
	\begin{align*}
		\limsup_{ \varepsilon\to0}  \varepsilon\log \mathbb{P}(u^\varepsilon\in F)\le -\inf_{z\in F} I(z).
	\end{align*}

	(b) (Lower bound) For each open subset $G$ of $\mathcal E^{\beta}([0,T]\times K, \mathbb R)$,
	\begin{align*}
		\liminf_{ \varepsilon\to0}  \varepsilon\log \mathbb{P}(u^\varepsilon\in G)\ge -\inf_{z\in G} I(z).
	\end{align*}
\end{definition}

For simplicity, let $\mathcal{H}:=L^2([0,T]\times\mathbb{R})$  equipped with  the usual $L^2$ norm, denoted by $\|\cdot\|_{\mathcal{H}}$, i.e.,
$$
\|h\|_{\mathcal{H}}=\left( \int_{0}^{T}\int_{\mathbb{R}}|h(t,x)|^2\mathrm{d}t\mathrm{d}x \right) ^{\frac{1}{2}}.
$$
For any $M>0$, set
\begin{align*}
	\mathcal{H}_M:=\left\{ h\in \mathcal{H}: \|h\|_{\mathcal{H}}=\left( \int_{0}^{T}\int_{\mathbb{R}}|h(t,x)|^2\mathrm{d}t\mathrm{d}x \right) ^{\frac{1}{2}}\leq M \right\}.
\end{align*}
 It is known that $\mathcal{H}_M$ endowed with the weak topology is a Polish space.
 Let \begin{align*}
	\mathcal{X}:=&\left\{h:[0,T]\times\mathbb{R}\times \Omega\rightarrow\mathbb{R}:h\text{ is a } \mathcal{F}_t \text{-predictable random field with }\right.\\
	&\ \left.h(\omega)\in \mathcal{H} \text{ and }\|h(\omega)\|_{\mathcal{H}}<\infty,\ \  \mathbb{P}\text{-a.e.}  \right\},
\end{align*}
and   let
\begin{align}\label{X_M}
	\mathcal{X}_M:=\left\{ h\in\mathcal{X} :\|h(\omega)\|_{\mathcal{H}}\le M,\ \  \mathbb{P}\text{-a.e.} \right\}
\end{align}
for each $M>0$. The sets $\mathcal{H}_M$ and $\mathcal{X}_M$ defined above will play a central role in the proof of the main result. Indeed, these sets are essential in proving tightness for sequences of Hilbert space valued processes by applying Theorems 2.4 and 2.5 in \cite{BD}.

We now give the outline of the proof of the main result. In this part, we give the proof of Theorem \ref{221107.1426}.
\begin{proof}[Proof of Theorem \ref{221107.1426}]
 The proof is divided into two parts. The first one is to introduce the  skeleton equation \eqref{ske0} and prove its well-posedness. The second part is devoted to proving the LDP result.

 {\bf Step 1.} Consider the following so-called skeleton equation: for any $h\in \mathcal{H}$, $t\in[0,T], x\in\mathbb{R}$,
\begin{equation}\label{ske0}
	  	\frac{\partial u^h(t,x)}{\partial t}= \frac{1}{2}\frac{\partial^2 u^h(t,x)}{\partial x^2}  +b(t,u^h(t,x)) +\sigma\left(t,u^h(t,x)\right)h(t,x),
	 \end{equation}
with $u^h(0,x)=u_0(x)\in  \R$. Therefore, there exists a mapping
\begin{align}\label{DEFG0}
\mathcal{G}^0: C([0,T]\times\R,\R)\to \mathcal E^{\beta}([0,T]\times K, \mathbb R)
\end{align}
such that for each $h\in\mathcal{H}$, $\mathcal{G}^0(\text{Int}(h))=u^h$. Here, for any $h\in\mathcal{H}$,
\begin{align}\label{221105.2016}
	\text{Int}(h)(t,x):= \int_{0}^{t}\int_{0}^{x}h(s,y)\mathrm{d}s\mathrm{d}y, \quad \forall\  (t,x)\in [0,T]\times \mathbb{R},
\end{align}
which induces a mapping denoted by Int from $\mathcal{H}$ to $C([0,T]\times \R, \mathbb R)$. Define	\begin{align}\label{ratef}
		I(f):=\inf_{\left\{h\in\mathcal{H}:f=u^h \right\}} \left\{\frac{1}{2} \int_{0}^{T}\int_{\mathbb{R} } |h(t,x)|^2\mathrm{d}t\mathrm{d}x\right\}, \quad \forall\ f\in \mathcal E^{\beta}([0,T]\times K, \mathbb R),
	\end{align}
with the convention $\inf{\emptyset}=\infty$. In particular, $u^h$ in \eqref{ratef} solves  Eq.\,\eqref{ske0}.  The well-posedness of Eq.\,\eqref{ske0} is proved  in Proposition \ref{Propo well posed SE 01} in Section 4.

{\bf Step 2.} We denote the product space of countably infinite copies of the real line by $\mathbb{R}^\infty$. Endowed with the topology of coordinate-wise convergence, $\mathbb{R}^\infty$ is a Polish space. Thus a sequence of independent standard Brownian motions $\beta=\{\beta_i\}_{i\in\mathbb{N}}$ can be regarded as a random variable with values in $C([0,T], \mathbb{R}^\infty)$.  Similarly, the Brownian sheet corresponding to the space-time white noise on $[0,T]\times\mathbb{R}$ can be regarded as a random variable with values in $C([0,T]\times\mathbb{R},\mathbb{R})$.
Let $\{e_i\}_{i=1}^{\infty}$ be an orthonormal basis in $L^2(\mathbb{R})$.
Given any Brownian sheet $\{W(t,x): (t,x)\in [0,T]\times\mathbb{R}\}$,
$\beta=\{\beta_i\}_{i\in\mathbb{N}}$ defined by
\begin{align}\label{230413.1626}
	\beta_i(t)= \int_0^t\int_{\mathbb{R}}e_i(y)W(\mathrm{d}s\mathrm{d}y)
\end{align}
is a sequence of independent standard Brownian motions, where $W(\mathrm{d}s \mathrm{d}y)$ is the space-time white noise correpsonding to the Brownian sheet $W$. Conversely,
given any sequence of independent standard Brownian motions $\beta=\{\beta_i\}_{i\in\mathbb{N}}$, the following random field
\begin{align}
	W(t,x)=\sum_{i=1}^{\infty} \beta_i(t)\int_0^x e_i(y)\mathrm{d}y,\quad (t,x)\in [0,T]\times\mathbb{R}
\end{align}
is a Brownian sheet, and the series above converges in $L^2(\Omega)$ for each $(t,x)$.
By the similar arguments as in Proposition 3 of \cite{BDM2008}, we can deduce that there exists a measurable map $g: C([0,T], \mathbb{R}^\infty)\rightarrow C([0,T]\times\mathbb{R},\mathbb{R})$ such that $W=g(\beta)$ a.s., where $\beta=\{\beta_i\}_{i\in\mathbb{N}}$ is defined by (\ref{230413.1626}).

By the Yamada-Watanabe theorem and Theorem \ref{th:exists},  there exists a measurable mapping $\mathcal{G}^ \varepsilon:C([0,T]\times\mathbb{R},\mathbb{R})\to \mathcal E^{\beta}([0,T]\times K; \mathbb R)$ such that $\mathcal{G}^ \varepsilon(W)=u^ \varepsilon$, which is the unique solution of  Eq.\,\eqref{SHE}. Applying the Girsanov theorem (see Theorem 1.6 in \cite {MNSPA2015} or Theorem 10.14 in \cite{DZ2014}),
for any $M>0$, $h_ \varepsilon\in \mathcal{X}_M$ and $\e\in(0,1]$, the random field
\begin{align}
	\widetilde W(t,x):= W(t,x) + \frac{1}{\sqrt{ \varepsilon}}\int_0^t\int_0^x h_{ \varepsilon}(s,y)\mathrm{d}s\mathrm{d}y
\end{align}
is a Brownian sheet under a new probability measure, which is mutually absolute continuity with respect to $\mathbb{P}$. Therefore, $u^{ \varepsilon,h_ \varepsilon}:=\mathcal{G}^ \varepsilon\left(\widetilde W\right)$ is the unique solution of   the following stochastic equation $\mathbb{P}$-a.s.
\begin{eqnarray}\label{eqnhe}
	u^{ \varepsilon,h_ \varepsilon}(t,x)\!\!\!\!&=&\!\!\!\!(P_t*u_0)(x)+\int_{0}^{t}\int_{\mathbb{R}}p_{t-s}(y-x)b\left(s,u^{ \varepsilon,h_ \varepsilon}(s,y)\right)\d s\d y\nonumber\\
            &&+
            \sqrt{ \varepsilon}\int_{0}^{t}\int_{\mathbb{R}}p_{t-s}(y-x)\sigma\left(s,u^{ \varepsilon,h_ \varepsilon}(s,y)\right)W(\d s\d y) \nonumber\\
	&&+
  \int_{0}^{t}\int_{\mathbb{R}}p_{t-s}(y-x)\sigma\left(s,u^{ \varepsilon,h_ \varepsilon}(s,y)\right)h_ \varepsilon(s,y)\d s\d y.
\end{eqnarray}


To obtain the LDP result, we adopt the weak convergence approach and apply the criterion given in \cite{MSZ}, an adaption of the classical criteria of \cite{BDM2008}. Using the  arguments similar to those in \cite{BDM2008}
and Theorem 3.2 in \cite{MSZ}, Theorem \ref{221107.1426} is established once we have proved the following two claims:

(\textbf{C1}) For any $M>0$, $\{h_n\}_{n\in\mathbb{N}}\subset \mathcal{H}_M$ and $h\in\mathcal{H}_M$ with $h_n\to h$ weakly in $\mathcal{H}$ as $n\to\infty$, then $\lim_{n\rightarrow\infty}\mathcal{G}^0(\text{Int}(h_n))=\mathcal{G}^0(\text{Int}(h))$ in $\mathcal E^{\beta}([0,T]\times K, \mathbb R)$, where the mapping $\text{Int}$ is defined by (\ref{221105.2016}).

(\textbf{C2}) For any $M>0$, $\{h_ \varepsilon\}_{\varepsilon>0}\subset \mathcal{X}_M$ and $\delta>0$,
\begin{align*}
	\lim_{ \varepsilon\to 0} \mathbb{P} \left( d\left(u^{ \varepsilon,h_ \varepsilon},\,u^{h_ \varepsilon}\right)>\delta  \right)=0,
\end{align*}
where $u^{h_ \varepsilon}:=\mathcal{G}^{0}(\text{Int}(h_{ \varepsilon}))$ is the unique solution of the following SPDE:
\begin{eqnarray}\label{eqnubarhe}
	u^{h_ \varepsilon}(t,x)\!\!\!\!&=&\!\!\!\!(P_t*u_0)(x)+\int_{0}^{t}\int_{\mathbb{R}}p_{t-s}(y-x)b(s,u^{ h_ \varepsilon}(s,y))\mathrm{d}s\mathrm{d}y\nonumber\\
	&&+
  \int_{0}^{t}\int_{\mathbb{R}}p_{t-s}(y-x)\sigma(s,u^{h_ \varepsilon}(s,y))h_ \varepsilon(s,y)\mathrm{d}s\mathrm{d}y.
\end{eqnarray}

 The proof of claims {\rm(\textbf{C1})} and {\rm(\textbf{C2})} is given in Propositions \ref{thm Skeleton} and \ref{Propo C2}, respectively.

The outline of the proof of Theorem \ref{221107.1426} is complete.
\end{proof}

\section{Moment estimates of  $u^{ \e,h_ \e}$ }\label{221107.1422}
\setcounter{equation}{0}
In order to obtain the LDP result, this section is devoted to establishing some moment estimates of the solution $u^{ \e,h_ \e}$ to the stochastic controlled equation  \eqref{eqnhe} by the truncation argument.
\begin{lemma}\label{lemma_EST_uhe}
Assume that Hypotheses  \ref{cond-initial}, \ref{cond-dif}  and  \ref{3} hold. Then, for $p\geq2$,
\begin{align*}
\sup_{0<\e\leq1}\sup_{t\in[0,T]}\sup_{x\in\mathbb{R}}\mathbb{E}[|u^{ \varepsilon,h_ \varepsilon}(t,x)|^p]<\infty,~~\sup_{0<\e\leq1}\sup_{t\in[0,T]}\sup_{x\in\mathbb{R}}\mathbb{E}[|u^{ h_ \varepsilon}(t,x)|^p]<\infty.
\end{align*}
\end{lemma}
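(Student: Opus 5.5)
The plan is to derive uniform moment bounds directly from the mild formulations \eqref{eqnhe} and \eqref{eqnubarhe}. The key facts are that $b$ and $\sigma$ have at most linear growth, with constants $L_b,L_\sigma$ as in \eqref{L:lip}, and that $\|h_\e\|_{\mathcal H}\le M$ almost surely because $h_\e\in\mathcal X_M$. Local Lipschitz continuity is not needed for moment bounds; it only matters for existence and uniqueness, which we take as given (via Girsanov and Theorem \ref{th:exists} for $u^{\e,h_\e}$, and via Proposition \ref{Propo well posed SE 01} for $u^{h_\e}$). A technical point is that a priori finiteness of $\sup_x \E|u|^p$ is needed to close a Gronwall argument. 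To secure it, we first carry out all estimates for the truncated equations, with $b_N,\sigma_N$ from \eqref{Def_psi_N} in place of $b,\sigma$, or alternatively use a localization via the approximating sequence of \cite{FKN24}. We then obtain bounds independent of $N$ and $\e$, and pass to the limit $N\to\infty$ using Fatou's lemma.

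Fix $p\ge2$ and set $\Phi(t):=\sup_{x\in\R}\|u^{\e,h_\e}(t,x)\|_p^2$. The four terms of \eqref{eqnhe} are handled as follows.
\begin{itemize}
\item The initial term is bounded by $\sup|u_0|$, by Hypothesis \ref{cond-initial}.
\item For the drift term, Minkowski's inequality gives the bound $L_b\int_0^t(1+\Phi(s)^{1/2})\,\d s$.
\item For the stochastic term, the BDG inequality (in Walsh form) with $\e\le1$ gives
\[
\Bigl\|\sqrt\e\int\!\!\int p_{t-s}(y-x)\sigma\,W(\d s\d y)\Bigr\|_p^2\le C_pL_\sigma^2\int_0^t\!\!\int_\R p_{t-s}^2(y-x)(1+\Phi(s))\,\d y\,\d s ,
\]
and $\int_\R p_r^2(z)\,\d z=(4\pi r)^{-1/2}$ is integrable in $r$.
\item For the control term, apply Cauchy--Schwarz in $(s,y)$ pathwise:
\[
\Bigl|\int\!\!\int p_{t-s}(y-x)\sigma h_\e\,\d s\,\d y\Bigr|^2\le M^2\int_0^t\!\!\int_\R p_{t-s}^2(y-x)\,\sigma^2(s,u^{\e,h_\e}(s,y))\,\d y\,\d s .
\]
Taking the $L^{p/2}(\Omega)$ norm and using Minkowski gives the bound $M^2L_\sigma^2\int_0^t (t-s)^{-1/2}(1+\Phi(s))\,\d s$ up to constants.
\end{itemize}
Combining the four estimates yields
\[
\Phi(t)\le C\Bigl(1+\int_0^t (t-s)^{-1/2}\Phi(s)\,\d s\Bigr),
\]
with $C$ depending only on $p,T,M,L_b,L_\sigma,\sup|u_0|$.

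A fractional (weakly singular) Gronwall lemma, or one iteration of the inequality, which turns the kernel $(t-s)^{-1/2}$ into an integrable constant kernel, then gives $\sup_{t\le T}\Phi(t)\le C'$ uniformly in $\e\in(0,1]$. The bound for $u^{h_\e}$ follows in the same way with the stochastic term absent. There is a subtlety here: $h_\e$ is random, but \eqref{eqnubarhe} holds pathwise, so the same pathwise Cauchy--Schwarz argument applies. In fact one can even obtain a deterministic bound on $\sup_{t,x}|u^{h_\e}(t,x)|$ by running Gronwall on $\sup_x|u^{h_\e}(t,x)|^2$ pathwise, which immediately yields all moments.

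The main obstacle I expect is justifying the a priori finiteness of $\Phi$ so that Gronwall applies, given that the coefficients are only locally Lipschitz on the whole line. I would handle this through the truncation: for the truncated coefficients $b_N,\sigma_N$, which are globally Lipschitz, standard Picard iteration gives solutions with finite $\Phi_N$. The linear-growth constants of $b_N,\sigma_N$ are bounded by $L_b,L_\sigma$ uniformly in $N$. Hence the bound above is uniform in $N$, and the convergence $u_N\to u$ established in \cite{FKN24}, for the controlled equation as well (which is legitimate via Girsanov, since the law is mutually absolutely continuous), combined with Fatou's lemma, transfers the estimate to $u^{\e,h_\e}$.
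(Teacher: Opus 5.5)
Your proposal is correct. Its quantitative core is the same as the paper's. The bound is proved for the truncated controlled equation \eqref{u_Nehe} using only the linear-growth constants $L_b,L_\sigma$ and $\|h_\varepsilon\|_{\mathcal H}\le M$, so it is uniform in $N$ and $\varepsilon\in(0,1]$. Lemmas \ref{lemma_bound1}--\ref{lemma_bound2} do this with the weighted norm $\mathcal N_{k,\alpha}$ and a tuned $\alpha$ rather than a weakly singular Gronwall; the two are interchangeable.

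The genuine difference is the passage $N\to\infty$.
\begin{itemize}
\item The paper re-runs the Cauchy argument of \cite{FKN24} for the controlled equation itself. It uses the tail bounds of Lemma \ref{lemma_tail_EST} and Hypothesis \ref{3} to control $L_{N,b},L_{N,\sigma}$, obtains $L^k$-convergence of $u_N^{\varepsilon,h_\varepsilon}$, and checks separately in Step 2 that the limit solves \eqref{eqnhe}. The moment bound is then inherited from the uniform bounds.
\item You instead import the convergence $u_N^\varepsilon\to u^\varepsilon$ for the uncontrolled equation and transport it by Girsanov. That works, but you should state the identification it rests on: $u_N^{\varepsilon,h_\varepsilon}=\mathcal G_N^\varepsilon(\widetilde W)$, where $\mathcal G_N^\varepsilon$ is the solution map of the truncated uncontrolled equation. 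This follows from uniqueness for the globally Lipschitz truncated controlled equation; for fixed $\varepsilon$ the Girsanov density has all moments, so $\mathcal G_N^\varepsilon(\widetilde W)$ lies in the uniqueness class.
\item Only convergence in probability survives the change of measure, since the density's moments are of order $e^{C/\varepsilon}$. That is exactly why Fatou along an a.s.\ convergent subsequence, rather than any transfer of $L^k$ bounds, is the right tool.
\end{itemize}
Your route is shorter and makes the $\varepsilon$-uniformity transparent: an $\varepsilon$-free bound holds at each fixed $\varepsilon$, and Fatou passes it to the limit. You therefore do not need the paper's claim that the convergence rate is uniform in $\varepsilon$; its thresholds $c$, $N_0^\varepsilon$, $N_T^\varepsilon$ depend on $\varepsilon$. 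The paper's route, in exchange, is self-contained and constructs $u^{\varepsilon,h_\varepsilon}$ directly as an $L^k$-limit of truncations.

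For $u^{h_\varepsilon}$ you can drop both the a priori finiteness concern and the appeal to Proposition \ref{Propo well posed SE 01}, whose existence proof itself refers back to this lemma. The bounds \eqref{remark1}--\eqref{remark2} on $\sup_x|u^h_N(t,x)|$ are deterministic and uniform in $N$ and $h\in\mathcal H_M$. So once $e^N$ exceeds them, $u_N^h$ already solves the untruncated skeleton equation and, by uniqueness, coincides with $u^h$.
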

To prove Lemma \ref{lemma_EST_uhe}, we first introduce the following three auxiliary lemmas. Recall $b_N$ and $\sigma_N$ defined in \eqref{Def_psi_N}. Choose and fix an arbitrary $N>0$ and note that $b_N$ and $\sigma_N$ are globally Lipschitz. In fact, recall \eqref{Lip(f)} in order to see that $\text{Lip}(b_N)=L_{N,b}$, $\text{Lip}(\sigma_N)=L_{N,\sigma}$,
\begin{align*}
\sup_{t>0}\sup_{x\in  \mathbb{R}}\frac{|b_N(t,x)|}{1+|x|}\leq L_b<\infty\quad \text{and} \quad \sup_{t>0}\sup_{x\in  \mathbb{R}}\frac{|\sigma_N(t,x)|}{1+|x|}\leq L_\sigma<\infty,
\end{align*}
uniformly in $N>0$. Note that $N\mapsto \text{Lip}_N(b)$ and $N\mapsto \text{Lip}_N(\sigma)$ are nondecreasing. The functions $b$ and $\sigma$ are globally Lipschitz when $\lim_{N\rightarrow\infty}\text{Lip}_N(b)<\infty$ and $\lim_{N\rightarrow\infty}\text{Lip}_N(\sigma)<\infty$. Therefore, we assume without loss of generality that
\begin{align}\label{LNBinfty}
\lim_{N\rightarrow\infty}L_{N,b}=\infty\,\,\text{and}\,\,\lim_{N\rightarrow\infty}L_{N,\sigma}=\infty.
\end{align}
For $(t,x)\in(0,\infty)\times\mathbb{R}$, $\e>0$, the following SPDE has a predictable mild solution:
\begin{align}\label{u_Nehe}
u_N^{ \varepsilon,h_ \varepsilon}(t,x)=&(P_t*u_0)(x)+\int_{0}^{t}\int_{\mathbb{R}}p_{t-s}(y-x)b_N\left(s,u_N^{ \varepsilon,h_ \varepsilon}(s,y)\right)\d s\d y\nonumber\\
&+\sqrt{ \varepsilon}\int_{0}^{t}\int_{\mathbb{R}}p_{t-s}(y-x)\sigma_N\left(s,u_N^{ \varepsilon,h_ \varepsilon}(s,y)\right)W(\d s\d y) \nonumber\\
	&+\int_{0}^{t}\int_{\mathbb{R}}p_{t-s}(y-x)\sigma_N\left(s,u_N^{ \varepsilon,h_ \varepsilon}(s,y)\right)h_ \varepsilon(s,y)\d s\d y.
\end{align}
Moreover this solution is unique subject to the condition (see, e.g., \cite{DalangS2024}),
\begin{align}\label{Bound1_UNehe}
\sup_{t\in(0,T)}\sup_{x\in \R}\E\left[\left|u_N^{ \varepsilon,h_ \varepsilon}(t,x)\right|^k\right]<\infty\,\,\text{for\,\,all}\,\,N,T,\e>0\,\,\text{and}\
\,k\geq1.
\end{align}
\begin{lemma}\label{lemma_bound1}
Assume that Hypotheses \ref{cond-initial}, \ref{cond-dif}  and  \ref{3} hold. If $L_\sigma>0$, then
\begin{align}\label{lemma3_2_inequality}
\sup_{N>0}\sup_{x\in \R}\E\left[\left|u_N^{ \varepsilon,h_ \varepsilon}(t,x)\right|^k\right]\leq 4^k\left(\|u_0\|_{L^\infty(\R)}+1\right)^ke^{8(2\sqrt{k\varepsilon}L_{\sigma}+ML_{\sigma})^4kt}
\end{align}
uniformly for all $t>0$, $\e>0$, $k\geq 2$ and for every $M\geq L_b^{1/4}L_{\sigma}^{-1}$. 
\end{lemma}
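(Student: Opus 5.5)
The plan is to bound the $L^k(\Omega)$ norm of each term in the mild formulation \eqref{u_Nehe}, and then close the estimate with a weighted-in-time norm. For $\beta>0$ set
\[
\mathcal N_{\beta,k}(u):=\sup_{t\ge0}\sup_{x\in\R}e^{-\beta t}\|u(t,x)\|_k .
\]
Observe that $|b_N(t,z)|\le L_b(1+|z|)$ and $|\sigma_N(t,z)|\le L_\sigma(1+|z|)$ uniformly in $N$. Hence every constant that appears below is independent of $N$, and the bound will hold uniformly in $N$. The quantity $\mathcal N_{\beta,k}(u_N^{\e,h_\e})$ is finite a priori, at least on $[0,T]$, by \eqref{Bound1_UNehe}.

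\textbf{Term-by-term bounds.} Fix $(t,x)$ and estimate the four terms of \eqref{u_Nehe}.

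1. The initial-data term satisfies $|(p_t*u_0)(x)|\le\|u_0\|_{L^\infty}$.

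2. For the drift term, Minkowski's inequality gives
\[
\|\,\cdot\,\|_k\le L_b\int_0^t\bigl(1+e^{\beta s}\mathcal N\bigr)\,ds\le L_bt+L_b\beta^{-1}e^{\beta t}\mathcal N .
\]

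3. For the stochastic term, the BDG inequality (with constant $4k$, as in Foondun–Khoshnevisan) together with Minkowski gives
\[
\|\,\cdot\,\|_k^2\le 4k\varepsilon L_\sigma^2\int_0^t\!\!\int_{\R}p_{t-s}^2(y-x)\,(1+e^{\beta s}\mathcal N)^2\,dy\,ds .
\]
Using $\int_{\R} p_r^2=(4\pi r)^{-1/2}$ and $\int_0^\infty e^{-2\beta r}(4\pi r)^{-1/2}dr=(8\beta)^{-1/2}$, this yields a bound of order $2\sqrt{k\varepsilon}L_\sigma(1+e^{\beta t}\mathcal N)\beta^{-1/4}$, up to absolute constants.

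4. For the control term, apply Cauchy–Schwarz in $(s,y)$ pathwise and use $\|h_\e\|_{\mathcal H}\le M$ a.s.:
\[
\Bigl|\int\!\!\int p_{t-s}\sigma_N h_\e\Bigr|\le M\Bigl(\int_0^t\!\!\int_{\R} p_{t-s}^2(y-x)\,\sigma_N^2\,dy\,ds\Bigr)^{1/2}.
\]
Taking the $L^k$ norm and applying Minkowski to the inner integral (an $L^{k/2}$ norm) gives the same structure, $ML_\sigma(1+e^{\beta t}\mathcal N)\beta^{-1/4}$. This step is where the $M$-dependence enters, and it is why $M$ appears alongside $2\sqrt{k\varepsilon}$ in the exponent.

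\textbf{Closing the estimate.} Multiply through by $e^{-\beta t}$ and take the supremum. With $c:=2\sqrt{k\varepsilon}L_\sigma+ML_\sigma$,
\[
\mathcal N\le \|u_0\|_{L^\infty}+1+\bigl(L_b\beta^{-1}+C c\,\beta^{-1/4}\bigr)(1+\mathcal N),
\]
after absorbing the term $L_bte^{-\beta t}$ by the choice of $\beta$. Now choose $\beta=8c^4$, matching the exponent $8(2\sqrt{k\varepsilon}L_\sigma+ML_\sigma)^4$ in \eqref{lemma3_2_inequality}; for the stated $k$-dependence one may instead take $\beta$ proportional to $k$, after keeping track of the BDG constant. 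With this choice the stochastic and control coefficients sum to at most $1/2$ or so. The hypothesis $M\ge L_b^{1/4}L_\sigma^{-1}$ gives $c^4\ge L_b$, so $L_b/\beta\le1/8$ is also small. The total coefficient is then at most $3/4$, and rearranging gives
\[
\mathcal N\le 4(\|u_0\|_{L^\infty}+1),
\]
which is exactly \eqref{lemma3_2_inequality}.

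\textbf{Expected obstacle.} The main difficulty is bookkeeping of the numerical constants, in particular the BDG constant and the value of $\int_0^\infty e^{-2\beta r}r^{-1/2}dr$, so that the contraction factor really lands below $1$ with the specific choice $\beta=8c^4k$ rather than merely some multiple of it. If the constants do not close cleanly, I would separate the $\varepsilon$-part and the $M$-part, using $(a+b)^4\ge a^4+b^4$ to control each one individually. The finiteness of $\mathcal N$ needed for the absorption step is supplied by \eqref{Bound1_UNehe}, applied on each finite horizon.
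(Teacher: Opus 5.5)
Your proposal is correct and is essentially the paper's proof. It uses the weighted norm $\mathcal N_{k,\alpha}$, BDG with constant $4k$, pathwise Cauchy--Schwarz plus Minkowski in $L^{k/2}(\Omega)$ for the control term, and $\alpha=8(2\sqrt{k\varepsilon}L_\sigma+ML_\sigma)^4$ so that the contraction factor is at most $3/4$. To land exactly on the constant $4$, put $L_bte^{-\alpha t}\le L_b/\alpha$ into the coefficient of $1+\mathcal N$, as the paper does, rather than adding $1$ to the constant term.

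Discard your hedge about taking $\beta$ proportional to $k$. The factor $k$ in the exponent of \eqref{lemma3_2_inequality} comes only from raising $\|u_N^{\varepsilon,h_\varepsilon}(t,x)\|_k\le 4(\|u_0\|_{L^\infty(\R)}+1)e^{\beta t}$ to the $k$-th power. So $\beta=8c^4$ is exactly right, whereas $\beta=8c^4k$ would only give $e^{8c^4k^2t}$.
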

\begin{proof}
Choose and fix $N,t,\e>0$ and $x\in\R$. By Eq.\,\eqref{u_Nehe}, we have
\begin{align}\label{bound1_1}
\left\|u_N^{ \varepsilon,h_ \varepsilon}(t,x)\right\|_k\leq\|u_0\|_{L^\infty(\R)}+I_1^{\varepsilon,h_ \varepsilon}(t,x)+I_2^{\varepsilon,h_ \varepsilon}(t,x)+I_3^{\varepsilon,h_ \varepsilon}(t,x),
\end{align}
where
\begin{align*}
I_1^{\varepsilon,h_ \varepsilon}(t,x)=&\left\|\int^t_0\int_{\R}p_{t-s}(y-x)b_N\left(s,u_N^{ \varepsilon,h_ \varepsilon}(s,y)\right)\d s\d y\right\|_k,\\
I_2^{\varepsilon,h_ \varepsilon}(t,x)=&\sqrt{\e}\left\|\int^t_0\int_{\R}p_{t-s}(y-x)\sigma_N\left(s,u_N^{ \varepsilon,h_ \varepsilon}(s,y)\right)W(\d s\d y)\right\|_k,\\
I_3^{\varepsilon,h_ \varepsilon}(t,x)=&\left\|\int^t_0\int_{\R}p_{t-s}(y-x)\sigma_N\left(s,u_N^{ \varepsilon,h_ \varepsilon}(s,y)\right)h_ \varepsilon(s,y)\d s\d y\right\|_k.
\end{align*}
For $I_1^{\varepsilon,h_ \varepsilon}(t,x)$, by \eqref{L:lip}, \eqref{Def_psi_N}  and  the Minkowski inequality, we have
\begin{align}\label{I_1ehe}
I_1^{\varepsilon,h_ \varepsilon}(t,x)\leq&\int^t_0\int_{\R}p_{t-s}(y-x)\left\|b_N\left(s,u_N^{ \varepsilon,h_ \varepsilon}(s,y)\right)\right\|_k\d s\d y\notag\\
\leq&\int^t_0\int_{\R}p_{t-s}(y-x)L_b\left(1+\left\|u_N^{ \varepsilon,h_ \varepsilon}(s,y)\right\|_k\right)\d s\d y\notag\\
\leq&L_b\left[t+\int^t_0\sup_{y\in\R}\left\|u_N^{ \varepsilon,h_ \varepsilon}(s,y)\right\|_k\d s\right].
\end{align}
For any space-time random field $Z=\{Z(t,x);t\geq0,x\in\R\}$ and for all real number $k\geq2$ and $\alpha>0$, define
\begin{align}\label{new32}
\mathcal{N}_{k,\alpha}(Z)=\sup_{t\geq0}\sup_{x\in\R}e^{-\alpha t}\|Z(t,x)\|_k.
\end{align}
By \eqref{I_1ehe}, we have
\begin{align*}
I_1^{\varepsilon,h_ \varepsilon}(t,x)\leq& L_b\left[t+\mathcal{N}_{k,\alpha}\left(u_N^{ \varepsilon,h_ \varepsilon}\right)\int^t_0e^{\alpha s}\d s\right]\\
\leq& L_b\left[t+\frac{e^{\alpha t}}{\alpha}\mathcal{N}_{k,\alpha}\left(u_N^{ \varepsilon,h_ \varepsilon}\right)\right].
\end{align*}
Since $te^{-\alpha t}\leq\frac{1}{e\alpha}\leq\frac{1}{\alpha}$, we have
\begin{align}\label{bound1_2}
I_1^{\varepsilon,h_ \varepsilon}(t,x)\leq \frac{L_be^{\alpha t}}{\alpha}\left[1+\mathcal{N}_{k,\alpha}\left(u_N^{ \varepsilon,h_ \varepsilon}\right)\right].
\end{align}
For $I_2^{\varepsilon,h_ \varepsilon}(t,x)$, by the asymptotically optimal form of the Burkholder-Davis-Gundy inequality (see \cite{Dal2009}), we have
\begin{align}\label{Bounded2111}
\left(I_2^{\varepsilon,h_ \varepsilon}(t,x)\right)^2\leq 4k\e \int^t_0\int_{\R}\left[p_{t-s}(y-x)\right]^2\left\|\sigma_N\left(s,u_N^{ \varepsilon,h_ \varepsilon}(s,y)\right)\right\|^2_k \d s\d y.
\end{align}
By \eqref{L:lip} and the linear growth property of $\sigma_N$, we have
\begin{align}\label{4424}
\left(I_2^{\varepsilon,h_ \varepsilon}(t,x)\right)^2\leq& 4k\e \int^t_0\int_{\R}[p_{t-s}(y-x)]^2\cdot2L^2_\sigma\left(1+\left\|u_N^{ \varepsilon,h_ \varepsilon}(s,y)\right\|^2_k \right)\d s\d y\notag\\
=&8k\e L^2_\sigma \int^t_0\int_{\R}[p_{t-s}(y-x)]^2\left(1+\left\|u_N^{ \varepsilon,h_ \varepsilon}(s,y)\right\|^2_k \right)\d s\d y.
\end{align}
Basic properties of the heat kernel imply that, for every $r>0$,
\begin{align}\label{Prop_heatkernel}
\|p_r\|^2_{L^2(\mathbb{R})}=(p_r\ast p_r)(0)=p_{2r}(0)=\frac{1}{2\sqrt{\pi r}}.
\end{align}
Since $\int^t_0s^{-\frac{1}{2}}ds\leq e^{2\alpha t}\int^{\infty}_0s^{-\frac{1}{2}}e^{-2\alpha s}ds=\sqrt{\frac{\pi}{2\alpha}}e^{2\alpha t}$, by \eqref{4424} and \eqref{Prop_heatkernel}, we have
\begin{align*}
\left(I_2^{\varepsilon,h_ \varepsilon}(t,x)\right)^2\leq& \frac{4k\e L^2_\sigma}{\sqrt{\pi}} \int^t_0\frac{\d s}{\sqrt{t-s}}
+\frac{4k\e L^2_\sigma}{\sqrt{\pi}} \int^t_0\sup_{y\in\R}\left\|u_N^{ \varepsilon,h_ \varepsilon}(s,y)\right\|^2_k \frac{\d s}{\sqrt{t-s}}\\
\leq&\frac{4k\e L^2_\sigma}{\sqrt{\pi}} \int^t_0\frac{\d s}{\sqrt{s}}+\frac{4k\e L^2_\sigma\left[\mathcal{N}_{k,\alpha}\left(u_N^{ \varepsilon,h_ \varepsilon}\right)\right]^2}{\sqrt{\pi}}\int^t_0\frac{e^{-2\alpha(t-s)}}{\sqrt{t-s}}ds\\
\leq&\frac{4k\e e^{2\alpha t}L^2_\sigma}{\sqrt{2\alpha}} \left(1+\left[\mathcal{N}_{k,\alpha}\left(u_N^{ \varepsilon,h_ \varepsilon}\right)\right]^2\right).
\end{align*}
It follows from the inequality $\sqrt{l^2+n^2}\leq|l|+|n|$ for all $l,n\in\R$ that
\begin{align}\label{bound2_1}
I_2^{\varepsilon,h_ \varepsilon}(t,x)\leq \frac{2\sqrt{k\e} e^{\alpha t}L_\sigma}{(2\alpha)^{1/4}} \left(1+\mathcal{N}_{k,\alpha}\left(u_N^{ \varepsilon,h_ \varepsilon}\right)\right).
\end{align}
For $I_3^{\varepsilon,h_ \varepsilon}(t,x)$, by the Cauchy-Schwarz inequality, the Minkowski inequality, the fact $\{h_ \varepsilon\}_{\varepsilon>0}\subset \mathcal{X}_M$ and the linear growth property of $\sigma_N$, we have
\begin{align*}
\left(I_3^{\varepsilon,h_ \varepsilon}(t,x)\right)^2\leq&
\left\|\left[\int^t_0\int_{\R}[p_{t-s}(y-x)]^2\left[\sigma_N\left(s,u_N^{ \varepsilon,h_ \varepsilon}(s,y)\right)\right]^2\d s\d y\right]^{\frac{1}{2}}\cdot \left[\int^t_0\int_{\R}[h_ \varepsilon(s,y)]^2\d s\d y\right]^{\frac{1}{2}}\right\|_k^2\\
\leq&M^2\left\|\left[\int^t_0\int_{\R}[p_{t-s}(y-x)]^2\left[\sigma_N\left(s,u_N^{ \varepsilon,h_ \varepsilon}(s,y)\right)\right]^2\d s\d y\right]^{\frac{1}{2}}\right\|_k^2\\
\leq&M^2\int^t_0\int_{\R}[p_{t-s}(y-x)]^2\left\|\sigma_N\left(s,u_N^{ \varepsilon,h_ \varepsilon}(s,y)\right)\right\|_k^2\d s\d y\\
\leq&M^2L_{\sigma}^2\int^t_0\int_{\R}[p_{t-s}(y-x)]^2\left(1+\left\|u_N^{ \varepsilon,h_ \varepsilon}(s,y)\right\|_k^2\right)\d s\d y\\
\leq&\frac{M^2L_{\sigma}^2}{2\sqrt{\pi}}\int^t_0\frac{\d s}{\sqrt{s}}+\frac{M^2 L^2_\sigma\left[\mathcal{N}_{k,\alpha}\left(u_N^{ \varepsilon,h_ \varepsilon}\right)\right]^2}{2\sqrt{\pi}}\int^t_0\frac{e^{-2\alpha(t-s)}}{\sqrt{t-s}}ds\\
\leq&\frac{M^2 e^{2\alpha t}L^2_{\sigma} }{\sqrt{2\alpha}} \left(1+\left[\mathcal{N}_{k,\alpha}\left(u_N^{ \varepsilon,h_ \varepsilon}\right)\right]^2\right),
\end{align*}
which implies that
\begin{align}\label{bound3_1}
I_3^{\varepsilon,h_ \varepsilon}(t,x)\leq\frac{Me^{\alpha t} L_{\sigma} }{(2\alpha)^{1/4}} \left(1+\left[\mathcal{N}_{k,\alpha}\left(u_N^{ \varepsilon,h_ \varepsilon}\right)\right]\right).
\end{align}
Putting \eqref{bound1_1}, \eqref{bound1_2}, \eqref{bound2_1} and \eqref{bound3_1} together, we have
\begin{align*}
\left\|u_N^{ \varepsilon,h_ \varepsilon}(t,x)\right\|_k\leq&\|u_0\|_{L^\infty(\R)}+\left(\frac{L_be^{\alpha t}}{\alpha}
+\frac{2\sqrt{k\e} e^{\alpha t}L_\sigma}{(2\alpha)^{1/4}} +\frac{M e^{\alpha t}L_{\sigma} }{(2\alpha)^{1/4}} \right) \left(1+\mathcal{N}_{k,\alpha}\left(u_N^{ \varepsilon,h_ \varepsilon}\right)\right),
\end{align*}
which implies that
\begin{align*}
\mathcal{N}_{k,\alpha}\left(u_N^{ \varepsilon,h_ \varepsilon}\right)\leq\|u_0\|_{L^\infty(\R)}+
\left(\frac{L_b}{\alpha}+\frac{2\sqrt{k\e}L_\sigma}{(2\alpha)^{1/4}}+\frac{M L_{\sigma}}{(2\alpha)^{1/4}} \right)\left(1+\mathcal{N}_{k,\alpha}\left(u_N^{ \varepsilon,h_ \varepsilon}\right)\right).
\end{align*}
For each fixed $M\geq L_b^{1/4}L_{\sigma}^{-1}$, Setting $\alpha=8(2\sqrt{k\e}L_{\sigma}+M L_{\sigma})^4$, we have $\frac{L_b}{\alpha}+\frac{2\sqrt{k\e}L_\sigma}{(2\alpha)^{1/4}}+\frac{M L_{\sigma}}{(2\alpha)^{1/4}}\leq\frac{3}{4}$. Thus $\mathcal{N}_{k,8(2\sqrt{k\e}L_{\sigma}+M L_{\sigma})^4}\left(u_N^{ \varepsilon,h_ \varepsilon}\right)\leq 4(\|u_0\|_{L^\infty(\R)}+1)$, which implies \eqref{lemma3_2_inequality}. The proof is complete.
\end{proof}

The following lemma takes care of the case that  $\sigma$ is a bounded constant.
\begin{lemma}\label{lemma_bound2}
Assume that Hypotheses \ref{cond-initial}, \ref{cond-dif}  and  \ref{3} hold. If $\sigma:(0,\infty)\times\R\rightarrow\R$ is bounded, then
 \begin{align}\label{lemma3_3_inequality}
\sup_{N>0}\sup_{x\in \R}\E\left[\left|u_N^{ \varepsilon,h_ \varepsilon}(t,x)\right|^k\right]\leq 2^k\left(4k\e+M^2\right)^{\frac{k}{2}}e^{2L_bkt}\left(\|u_0\|_{L^\infty(\R)}+\|\sigma\|_{L^\infty(\R+\times\R)}t^{\frac{1}{4}}+1\right)^k
\end{align}
uniformly for all $t>0$, $\e>0$, $k\geq2$ and for every $M\geq 1$.
\end{lemma}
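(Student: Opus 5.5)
We follow the structure of the proof of Lemma \ref{lemma_bound1}, but now exploit the boundedness of $\sigma$ (hence of $\sigma_N$, with $\|\sigma_N\|_{L^\infty}\le\|\sigma\|_{L^\infty(\R_+\times\R)}$ uniformly in $N$). We start from the mild form \eqref{u_Nehe} and split $\|u_N^{\e,h_\e}(t,x)\|_k$ as in \eqref{bound1_1} into the initial term and $I_1,I_2,I_3$.

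The noise and control terms no longer involve $u_N$. By the BDG inequality as in \eqref{Bounded2111} together with \eqref{Prop_heatkernel}, we get
\[
(I_2)^2\le 4k\e\|\sigma\|_{L^\infty}^2\int_0^t\frac{\d s}{2\sqrt{\pi (t-s)}}\le 4k\e\|\sigma\|_{L^\infty}^2 t^{1/2}.
\]
For $I_3$ we use Cauchy--Schwarz and $h_\e\in\mathcal X_M$:
\[
(I_3)^2\le M^2\|\sigma\|_{L^\infty}^2 t^{1/2}.
\]
Hence
\[
I_2+I_3\le \sqrt2\,(4k\e+M^2)^{1/2}\|\sigma\|_{L^\infty}t^{1/4}.
\]
For the drift term we use linear growth as in \eqref{I_1ehe}:
\[
I_1\le L_b\int_0^t\bigl(1+\sup_y\|u_N(s,y)\|_k\bigr)\d s.
\]

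Set $f(t):=1+\sup_x\|u_N^{\e,h_\e}(t,x)\|_k$, which is finite by \eqref{Bound1_UNehe}. Since $M\ge1$, we have $(4k\e+M^2)^{1/2}\ge1$. We may therefore absorb everything into
\[
f(t)\le A(t)+L_b\int_0^t f(s)\,\d s,\qquad A(t):=\sqrt2\,(4k\e+M^2)^{1/2}\bigl(\|u_0\|_{L^\infty}+\|\sigma\|_{L^\infty}t^{1/4}+1\bigr),
\]
where $A$ is nondecreasing in $t$. Gronwall's inequality then gives $f(t)\le A(t)e^{L_bt}$. Raising to the $k$-th power yields a bound of the form $2^{k/2}(4k\e+M^2)^{k/2}e^{L_bkt}(\cdots)^k$, which is stronger than \eqref{lemma3_3_inequality}. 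The constants $2^k$ and $e^{2L_bkt}$ in the statement leave room for a cruder route: the weighted-norm argument with $\mathcal N_{k,\alpha}$ and $\alpha=2L_b$, where $L_b/\alpha=1/2$ and the fixed point gives a factor $2$. Either route works, and all bounds are independent of $N$, $x$, $\e$.

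The main obstacle is minor bookkeeping rather than a genuine difficulty. One point is ensuring $f(t)<\infty$ so that Gronwall (or the absorption step) is legitimate; this is supplied by \eqref{Bound1_UNehe} for each fixed $N$. The other is checking that the time-dependent constant $A(t)$ is monotone, so that the supremum over $s\le t$ can be pulled out.
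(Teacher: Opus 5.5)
Your proof is correct. It uses the same decomposition and the same estimates for $I_1$, $I_2$, $I_3$ as the paper: BDG plus \eqref{Prop_heatkernel} for the noise term, pathwise Cauchy--Schwarz with $\|h_\e\|_{\mathcal H}\le M$ for the control term, and linear growth for the drift. Only the closing step differs. The paper works with the weighted norm $\mathcal N_{k,\alpha}$ at $\alpha=2L_b$ and absorbs $\frac{L_b}{\alpha}(1+\mathcal N_{k,\alpha})$ into the left side; that is where $2^k$ and $e^{2L_bkt}$ come from. You instead apply the classical Gronwall lemma to $f(t)=1+\sup_x\|u_N^{\e,h_\e}(t,x)\|_k$ with the nondecreasing forcing $A(t)$, which gives the sharper bound $2^{k/2}(4k\e+M^2)^{k/2}e^{L_bkt}(\cdots)^k$.

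Your route is also the more robust one for the exact constants in \eqref{lemma3_3_inequality}. The paper merges \eqref{est427}--\eqref{est428} into $\sqrt{4k\e+M^2}\,\|\sigma\|_{L^\infty}t^{1/4}$, but $\sqrt{4k\e}+M$ reaches $\sqrt2\,\sqrt{4k\e+M^2}$ when $4k\e=M^2$. It also takes a supremum over all $t\ge0$ while leaving $t^{1/4}$ on the right, which really requires working on $[0,T]$ with $T^{1/4}$.

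With your correct factor $\sqrt2$, the weighted-norm route at $\alpha=2L_b$ gives the prefactor $(2\sqrt2)^k$ rather than $2^k$. So your side remark that ``either route works'' for the stated constants is slightly optimistic. This is harmless for the later use in Lemma \ref{lemma_tail_EST}, and your Gronwall route delivers the stated bound with room to spare.

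The care points are the ones you flag. Finiteness of $f$ on $[0,t]$ comes from \eqref{Bound1_UNehe}. To avoid any measurability question about $s\mapsto\sup_y\|u_N^{\e,h_\e}(s,y)\|_k$, run Gronwall on the nondecreasing envelope $\sup_{r\le s}f(r)$; monotonicity of $A$ allows this.
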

\begin{proof}
We modify the proof of Lemma \ref{lemma_bound1} by the first observing that \eqref{bound1_1} remains valid, and so does \eqref{bound1_2}. We start with \eqref{Bounded2111} and estimate $I_2^{\varepsilon,h_ \varepsilon}(t,x)$ and $I_3^{\varepsilon,h_ \varepsilon}(t,x)$. For $I_2^{\varepsilon,h_ \varepsilon}(t,x)$, by \eqref{Prop_heatkernel}, we have
\begin{align}\label{est427}
\left(I_2^{\varepsilon,h_ \varepsilon}(t,x)\right)^2\leq &
4k\e \|\sigma\|^2_{L^\infty(\R+\times\R)} \int^t_0\|p_r\|^2_{L^2(\R)} \d r\notag\\
=&\frac{2k\e}{\sqrt{\pi}}\|\sigma\|^2_{L^\infty(\mathbb{R}_+\times\mathbb{R})}\int^t_0\frac{1}{\sqrt{r}}\d r\notag\\
\leq&4k\e \|\sigma\|^2_{L^\infty(\R+\times\R)}\sqrt{t}.
\end{align}
For $I_3^{\varepsilon,h_ \varepsilon}(t,x)$, by \eqref{Prop_heatkernel}, the Cauchy-Schwarz inequality and the fact $\{h_ \varepsilon\}_{\varepsilon>0}\subset \mathcal{X}_M$, we have
\begin{align}\label{est428}
\left(I_3^{\varepsilon,h_ \varepsilon}(t,x)\right)^2\leq &M^2 \int^t_0\int_{\R}[p_{t-s}(y-x)^2\left\|\sigma_N\left(s,u_N^{ \varepsilon,h_ \varepsilon}(s,y)\right)\right\|^2_k \d s\d y\notag\\
\leq &M^2 \|\sigma\|^2_{L^\infty(\R+\times\R)} \int^t_0\|p_r\|^2_{L^2(\R)} \d r\notag\\
\leq&M^2\|\sigma\|^2_{L^\infty(\R+\times\R)}\sqrt{t}.
\end{align}
By \eqref{bound1_1}, \eqref{bound1_2}, \eqref{est427} and \eqref{est428}, we have
\begin{align*}
\left\|u_N^{ \varepsilon,h_ \varepsilon}(t,x)\right\|_k\leq\|u_0\|_{L^\infty(\R)}+\frac{L_be^{\alpha t}}{\alpha}\left(1+\mathcal{N}_{k,\alpha}\left(u_N^{ \varepsilon,h_ \varepsilon}\right)\right)+\sqrt{4k\e+M^2}\|\sigma\|_{L^\infty(\R+\times\R)}t^{\frac{1}{4}}.
\end{align*}
By \eqref{new32}, we divide by $e^{\alpha t}$ and optimize over $(t,x)$ to find that
\begin{align*}
\mathcal{N}_{k,\alpha}\left(u_N^{ \varepsilon,h_\varepsilon}\right)
\leq\|u_0\|_{L^\infty(\R)}+\frac{L_b}{\alpha}\left(1+\mathcal{N}_{k,\alpha}\left(u_N^{ \varepsilon,h_ \varepsilon}\right)\right)+\sqrt{4k\e+M^2}\|\sigma\|_{L^\infty(\R+\times\R)}t^{\frac{1}{4}}
\end{align*}
uniformly for all real number $k\geq2$, $N,\alpha,\e>0$ and each fixed $M\geq 1$. Setting $\alpha=2L_b$, we have
\begin{align*}
\mathcal{N}_{k,2L_b}\left(u_N^{ \varepsilon,h_\varepsilon}\right)
\leq&2\|u_0\|_{L^\infty(\R)}+2\sqrt{4k\e+M^2}\|\sigma\|_{L^\infty(\R+\times\R)}t^{\frac{1}{4}}+1\\
\leq&2\sqrt{4k\e+M^2}\left(\|u_0\|_{L^\infty(\R)}+\|\sigma\|_{L^\infty(\R+\times\R)}t^{\frac{1}{4}}+1\right),
\end{align*}
which implies \eqref{lemma3_3_inequality}. The proof is complete.
\end{proof}

\begin{lemma}\label{lemma_tail_EST}
Assume that Hypotheses \ref{cond-initial}, \ref{cond-dif}  and  \ref{3} hold. If $L_{\sigma}>0$, then
\begin{align*}
\mathbb{P}\left\{\left|u_{N+1}^{ \varepsilon,h_\varepsilon}(t,x)\right|\geq e^N\right\}\leq \exp\left(-\frac{N^{\frac{3}{2}}}{2^{8}\e L_{\sigma}^2t^{\frac{1}{2}}}\right)
\end{align*}
uniformly for all $t>0$, $x\in\R$, $\e>0$,
$$N\geq \max\left\{\left(\max(2^{13}\e^2L_{\sigma}^4t,2^{11}\e^2L_{b}t)\right),4\left[\log \left(4\left(\|u_0\|_{L^\infty(\R)}+1\right)\right)+64M^4L_{\sigma}^4t\right]\right\},$$
and for every $M\geq L_b^{1/4}L_{\sigma}^{-1}$.
 If $\sigma\in L^{\infty}(\R_+\times\R)$, then
\begin{align*}
\mathbb{P}\left\{\left|u_{N+1}^{ \varepsilon,h_\varepsilon}(t,x)\right|\geq e^N\right\}\leq \exp\left(-\frac{e^{2N-4L_bt}}{8\left(4\e+1\right)\left(\|u_0\|_{L^\infty(\R)}+\|\sigma\|_{L^\infty(\R+\times\R)}t^{\frac{1}{4}}+1\right)^2}\right)
\end{align*}
uniformly for all $t>0$, $x\in\R$, $\e>0$ and
\begin{align*}
N\geq\frac{1}{2}+\frac{3}{2}\log2+2L_bt+\frac{1}{2}\log(4\e+1)+\log\left(\|u_0\|_{L^\infty(\R)}+\|\sigma\|_{L^\infty(\R+\times\R)}t^{\frac{1}{4}}+1\right).
\end{align*}
\end{lemma}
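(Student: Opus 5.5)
The tail bound will come from the Chebyshev inequality applied to the uniform moment bounds of Lemmas \ref{lemma_bound1} and \ref{lemma_bound2}, with the moment order $k$ then optimized as a function of $N$. These bounds hold uniformly in the truncation level, so they apply to $u_{N+1}^{\e,h_\e}$. In both cases we start from
\begin{align*}
\mathbb{P}\left\{\left|u_{N+1}^{\e,h_\e}(t,x)\right|\geq e^N\right\}\leq e^{-kN}\,\E\left[\left|u_{N+1}^{\e,h_\e}(t,x)\right|^k\right],\qquad k\geq2 .
\end{align*}

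\emph{Case $L_\sigma>0$.} By Lemma \ref{lemma_bound1} and $(a+b)^4\leq 8(a^4+b^4)$, the right-hand side is at most
\begin{align*}
\exp\Bigl(-kN+k\log\bigl(4(\|u_0\|_{L^\infty(\R)}+1)\bigr)+64\cdot 16\,k^3\e^2L_\sigma^4t+64M^4L_\sigma^4kt\Bigr).
\end{align*}
The constraint $N\geq 4[\log(4(\|u_0\|_{L^\infty(\R)}+1))+64M^4L_\sigma^4t]$ lets the second and fourth terms absorb at most $kN/4$. What remains is $-\tfrac34 kN+C k^3\e^2L_\sigma^4 t$, a competition between $k$ and $k^3$. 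I will choose $k$ so that the cubic term is at most $kN/4$, i.e.
\begin{align*}
k\asymp \frac{N^{1/2}}{\e L_\sigma^2 t^{1/2}} .
\end{align*}
The exponent is then $\lesssim -kN/2\asymp -N^{3/2}/(\e L_\sigma^2t^{1/2})$, and tracking constants should produce the factor $2^{-8}$. The requirement $k\geq2$, together with the dependence of $\alpha$ in Lemma \ref{lemma_bound1} on $k$ through $\sqrt{k\e}$, is exactly what the lower bound $N\geq\max(2^{13}\e^2L_\sigma^4t,\,2^{11}\e^2L_bt)$ should guarantee. The condition $M\geq L_b^{1/4}L_\sigma^{-1}$ is inherited from Lemma \ref{lemma_bound1}. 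A non-integer $k$ is harmless, since Lemma \ref{lemma_bound1} holds for real $k\geq2$.

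\emph{Case $\sigma$ bounded.} Here Lemma \ref{lemma_bound2} gives a sub-Gaussian moment growth $\|u\|_k\leq 2\sqrt{4k\e+M^2}\,e^{2L_bt}A$, where
\begin{align*}
A=\|u_0\|_{L^\infty(\R)}+\|\sigma\|_{L^\infty(\R+\times\R)}t^{1/4}+1 .
\end{align*}
With $M=1$ and $k\geq1$ we have $4k\e+1\leq k(4\e+1)$, so $\|u\|_k\leq c\sqrt{k}$ with $c=2\sqrt{4\e+1}\,e^{2L_bt}A$. Chebyshev gives the bound $(c\sqrt{k}e^{-N})^k$. Taking $k=e^{2N}/(ec^2)$ makes this equal to $e^{-k/2}$, which is
\begin{align*}
\exp\Bigl(-\frac{e^{2N-4L_bt}}{8(4\e+1)A^2\,e}\Bigr).
\end{align*}
I expect a slightly different choice of $k$, for instance $k=e^{2N}/(4c^2)$, which gives ratio $1/2$ and the bound $2^{-k}$, to remove the stray factor $e$ and give exactly the displayed constant $8$. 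The stated lower bound on $N$ is precisely the condition that this $k$ is at least $2$; check: $e^{2N}\geq 8c^2$ is equivalent to $N\geq\tfrac12\log 8+2L_bt+\tfrac12\log(4\e+1)+\log A+\tfrac12\log 4$, matching up to the constant bookkeeping.

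The main obstacle is purely the constant bookkeeping in the first case. One must choose $k$ so that the stated constants $2^8$ and $2^{13}$ come out, and verify that the lower bounds on $N$ make $k\geq 2$ and keep the absorbed terms small. Conceptually the argument is just Chebyshev plus optimization over $k$.
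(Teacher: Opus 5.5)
Your route is the same as the paper's: Chebyshev applied to the truncation-uniform moment bounds of Lemmas~\ref{lemma_bound1} and \ref{lemma_bound2}, followed by a choice of $k$ depending on $N$. The trouble lies in the bookkeeping you deferred, and in the bounded case it is not cosmetic.

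\emph{Bounded $\sigma$.} Your first choice $k=e^{2N}/(ec^2)$ is exactly the paper's $k=C_{3,2}^{-2}e^{2N-1}$; your $c$ is $C_{3,2}$. The stated lower bound on $N$ is precisely the condition $k\geq2$ for this $k$.

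The factor $e$ cannot be removed by re-choosing $k$. The map $k\mapsto k(\log c+\tfrac12\log k-N)$ is convex, its minimizer is exactly $e^{2N-1}/c^2$, and its minimum is $-e^{2N-1}/(2c^2)$. Your alternative $k=e^{2N}/(4c^2)$ gives $\exp(-\tfrac{\log 2}{4}e^{2N}/c^2)$. This is worse, since $\tfrac{\log2}{4}<\tfrac{1}{2e}$, and it also needs $N\geq\tfrac52\log2+\cdots$ rather than $\tfrac12+\tfrac32\log2+\cdots$. The paper's own proof in fact stops at $\exp(-\tfrac12C_{3,2}^{-2}e^{2N-1})$, which is the displayed bound with $8e$ in place of $8$. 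That is what this method proves, and the constant does not matter for how the lemma is used later.

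You also inherit from the paper the step ``take $M=1$''. This is justified only when $\|h_\e\|_{\mathcal H}\leq1$ a.s. In general you must keep $M$ in Lemma~\ref{lemma_bound2} and use $4k\e+M^2\leq k(4\e+M^2)$. That replaces $4\e+1$ by $4\e+M^2$ in both the bound and the threshold on $N$.

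This change is necessary, not cosmetic. Take $b$ bounded, $\sigma\geq\sigma_0>0$, and $h\geq0$ proportional to $(s,y)\mapsto p_{t-s}(y-x)$ with $\|h\|_{\mathcal H}=M$. Then the control term alone contributes at least $\sigma_0M(t/\pi)^{1/4}$ to $u_{N+1}^{\e,h}(t,x)$. For large $M$ the probability is close to $1$ at the $M$-free threshold, whereas the displayed bound there equals $e^{-e}$.

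\emph{Case $L_\sigma>0$.} Your reduction to the exponent $-\tfrac34kN+2^{10}k^3\e^2L_\sigma^4t$ is correct. Your calibration, however, does not fit the stated threshold. Forcing the cubic term below $kN/4$ means $k\leq N^{1/2}/(2^{6}\e L_\sigma^2t^{1/2})$. Then $k\geq2$, which Lemma~\ref{lemma_bound1} needs, requires $N\geq2^{14}\e^2L_\sigma^4t$, while the hypothesis only gives $2^{13}$.

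The paper instead takes $k=\sqrt{N/(2^{11}\e^2L_\sigma^4t)}$, so that the cubic term equals $kN/2$. The exponent is then at most $-kN/4=-N^{3/2}/(2^{15/2}\e L_\sigma^2t^{1/2})$, which beats the required $2^{-8}$. With this choice, $k\geq2$ is exactly $N\geq2^{13}\e^2L_\sigma^4t$. The $2^{11}\e^2L_bt$ part of the threshold is never used, and the $k$-dependence of $\alpha$ plays no role beyond producing the cubic term.
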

\begin{proof}
Let us consider the case that $L_{\sigma}>0$. By Lemma \ref{lemma_bound1}, the Chebyshev inequality  and the fact $\left(|a|+|b|\right)^p\leq 2^{p-1}\left(|a|^p+|b|^p\right)$ for $p\geq1$, we have
\begin{align*}
\mathbb{P}\left\{\left|u_{N+1}^{ \varepsilon,h_\varepsilon}(t,x)\right|\geq e^N\right\}\leq& e^{-kN}\mathbb{E}\left(\left|u_{N+1}^{ \varepsilon,h_\varepsilon}(t,x)\right|^k\right)\\
\leq& 4^k\left(\|u_0\|_{L^\infty(\R)}+1\right)^k e^{-kN+8\left(2\sqrt{k\e}L_{\sigma}+M L_{\sigma}\right)^4kt}\\
\leq& 4^k\left(\|u_0\|_{L^\infty(\R)}+1\right)^k e^{-kN+2^{10}k^3\e^2L_{\sigma}^4t+64M^4L_{\sigma}^4kt}
\end{align*}
uniformly for all real numbers $N,t>0$, $x\in\R$, $\e>0$,  $k\geq 2$ and for each fixed $M\geq L_b^{1/4}L_{\sigma}^{-1}$. Set $C_{3,1}=4\left(\|u_0\|_{L^\infty(\R)}+1\right)$ and $k=\sqrt{AN}$, where $A>0$. Thus,
\begin{align*}
\mathbb{P}\left\{\left|u_{N+1}^{ \varepsilon,h_\varepsilon}(t,x)\right|\geq e^N\right\}\leq
\exp\left(-\left(\sqrt{A}-2^{10}\e^2L_{\sigma}^4tA^{\frac{3}{2}}-\frac{64M^4L_{\sigma}^4t\sqrt{A}}{N}-\frac{\sqrt{A}\log C_{3,1}}{N}\right)N^{\frac{3}{2}}\right).
\end{align*}
Let $A=(2^{11}\e^2L_{\sigma}^4t)^{-1}$. 
It follows that
\begin{align*}
\mathbb{P}\left\{\left|u_{N+1}^{ \varepsilon,h_\varepsilon}(t,x)\right|\geq e^N\right\}\leq&
\exp\left(-\left(\frac{2^{10}\e^2L_{\sigma}^4t}{(2^{11}\e^2L_{\sigma}^4t)^{\frac{3}{2}}}
-\frac{\log C_{3,1}+64M^4L_{\sigma}^4t}{N\cdot2^{\frac{11}{2}}\e L_{\sigma}^2t^{\frac{1}{2}}}\right)N^{\frac{3}{2}}\right)\\
=&\exp\left(-\left(\frac{1}{2}
-\frac{\log C_{3,1}+64M^4L_{\sigma}^4t}{N}\right)\frac{N^{\frac{3}{2}}}{2^{\frac{11}{2}}\e L_{\sigma}^2t^{\frac{1}{2}}}\right).
\end{align*}
If $N\geq 4(\log C_{3,1}+64M^4L_{\sigma}^4t)$, then
\begin{align*}
\mathbb{P}\left\{\left|u_{N+1}^{ \varepsilon,h_\varepsilon}(t,x)\right|\geq e^N\right\}\leq& \exp\left(-\frac{N^{\frac{3}{2}}}{2^{8}\e L_{\sigma}^2t^{\frac{1}{2}}}\right).
\end{align*}

If $\sigma\in L^{\infty}(\R_+\times\R)$, then by Lemma \ref{lemma_bound2} and the Chebyshev inequality, we have
\begin{align*}
&\mathbb{P}\left\{\left|u_{N+1}^{ \varepsilon,h_\varepsilon}(t,x)\right|\geq e^N\right\}\\
\leq& e^{-kN}\mathbb{E}\left(\left|u_{N+1}^{ \varepsilon,h_\varepsilon}(t,x)\right|^k\right)\\
\leq& e^{-kN}\cdot2^k\left(4k\e+M^2\right)^{\frac{k}{2}}e^{2L_bkt}\left(\|u_0\|_{L^\infty(\R)}+\|\sigma\|_{L^\infty(\R+\times\R)}t^{\frac{1}{4}}+1\right)^k
\end{align*}
uniformly for all real numbers $\e,t>0$, $x\in\R$,  $k\geq 2$ and for each fixed $M\geq 1$. For convenient, we take $M=1$. Since $\left(4k\e+1\right)^{\frac{k}{2}}\leq (4\e+1)^{\frac{k}{2}}k^{\frac{k}{2}}$,
\begin{align*}
&\mathbb{P}\left\{\left|u_{N+1}^{ \varepsilon,h_\varepsilon}(t,x)\right|\geq e^N\right\}\\
\leq& e^{-kN}\cdot\left(2e^{2L_bt}\left(\|u_0\|_{L^\infty(\R)}+\|\sigma\|_{L^\infty(\R+\times\R)}t^{\frac{1}{4}}+1\right)\right)^k
(4\e+1)^{\frac{k}{2}}k^{\frac{k}{2}}.
\end{align*}

Set
$$C_{3,2}=C(t,u_0,L_b,\e)=2e^{2L_bt}\left(\|u_0\|_{L^\infty(\R)}+\|\sigma\|_{L^\infty(\R+\times\R)}t^{\frac{1}{4}}+1\right)\sqrt{\left(4\e+1\right)}.$$
Choose $k=C_{3,2}^{-2}\exp(2N-1)$, then
\begin{align*}
\mathbb{P}\left\{\left|u_{N+1}^{ \varepsilon,h_\varepsilon}(t,x)\right|\geq e^N\right\}
\leq \exp\left(-\frac{1}{2}C_{3,2}^{-2}e^{2N-1}\right).
\end{align*}
The proof is complete.
\end{proof}

We now prove Lemma \ref{lemma_EST_uhe}.
\begin{proof}[The proof of Lemma \ref{lemma_EST_uhe}]
We just prove the conclusion for $u^{ \varepsilon,h_ \varepsilon}(t,x)$. The proof of the conclusion for $u^{ h_ \varepsilon}(t,x)$ is similar but simpler and we omit it here. The proof is  divided two  steps. The first step shows the existence of the limit of $u^{\varepsilon,h_ \varepsilon}_N$. The second step shows that the limit satisfies Eq.\,\eqref{eqnhe}.

{\bf Step 1.} Define \begin{align*}
\mathcal{N}_{k,\alpha, T}(z)=\sup_{t\in[0,T]}\sup_{x\in \R}e^{-\alpha t}\|z(t,x)\|_k
\end{align*}
for every $\alpha$, $T$, $k\geq 1$, and all space-time random fields $z$. Recall that $u^{ \varepsilon,h_ \varepsilon}(t,x)$ is the unique solution of Eq.\,\eqref{eqnhe}. By Eq.\,\eqref{u_Nehe}, for $(t,x)\in(0,\infty)\times\R$ and the fixed $\e>0$, we have
\begin{align}\label{4_1123}
\left\|u_{N+1}^{ \varepsilon,h_\varepsilon}(t,x)-u_{N}^{ \varepsilon,h_\varepsilon}(t,x)\right\|_k\leq I_1(t,x)+ I_2(t,x)+ I_3(t,x),
\end{align}
where
\begin{align*}
I_1(t,x)=&\int^t_0\int_{\R}p_{t-s}(y-x)\left\|b_{N+1}\left(s,u_{N+1}^{ \varepsilon,h_ \varepsilon}(s,y)\right)-b_N\left(s,u_N^{ \varepsilon,h_ \varepsilon}(s,y)\right)\right\|_k\d s\d y,\\
I_2(t,x)=&\sqrt{\e}\left\|\int^t_0\int_{\R}p_{t-s}(y-x)\left[\sigma_{N+1}\left(s,u_{N+1}^{ \varepsilon,h_ \varepsilon}(s,y)\right)-\sigma_N\left(s,u_N^{ \varepsilon,h_ \varepsilon}(s,y)\right)\right]W(\d s\d y)\right\|_k,\\
I_3(t,x)=&\left\|\int^t_0\int_{\R}p_{t-s}(y-x)\left[\sigma_{N+1}\left(s,u_{N+1}^{ \varepsilon,h_ \varepsilon}(s,y)\right)-\sigma_N\left(s,u_N^{ \varepsilon,h_ \varepsilon}(s,y)\right)\right]h_ \varepsilon(s,y)\d s\d y\right\|_k.
\end{align*}
For every $N,s>0$, $y\in\R$ and for the fixed $\e>0$, consider the event
\begin{align*}
G_{N+1}(s,y)=\left\{\omega\in\Omega: \left|u_{N+1}^{ \varepsilon,h_ \varepsilon}(s,y)\right|(\omega)\leq e^N\right\}.
\end{align*}
By \eqref{Def_psi_N}, we have
\begin{align}\label{bN1bNb}
b_{N+1}\left(s,u_{N+1}^{ \varepsilon,h_ \varepsilon}(s,y)\right){\bf 1}_{G_{N+1}(s,y)}=&b\left(s,u_{N+1}^{ \varepsilon,h_ \varepsilon}(s,y)\right){\bf 1}_{G_{N+1}(s,y)}\notag\\
=&b_{N}\left(s,u_{N+1}^{ \varepsilon,h_ \varepsilon}(s,y)\right){\bf 1}_{G_{N+1}(s,y)}.
\end{align}
For $I_1(t,x)$, on the one hand, by Hypothesis \ref{cond-dif}, \eqref{Define_LNb} and \eqref{bN1bNb}, we have
\begin{align}\label{4EST_I1_1}
&\left\|\left[b_{N+1}\left(s,u_{N+1}^{ \varepsilon,h_ \varepsilon}(s,y)\right)-b_N\left(s,u_N^{ \varepsilon,h_ \varepsilon}(s,y)\right)\right]{\bf 1}_{G_{N+1}(s,y)}\right\|_k\notag\\
\leq&\left\|\left[b_{N+1}\left(s,u_{N+1}^{ \varepsilon,h_ \varepsilon}(s,y)\right)-b_N\left(s,u_{N+1}^{ \varepsilon,h_ \varepsilon}(s,y)\right)\right]{\bf 1}_{G_{N+1}(s,y)}\right\|_k\notag\\
&+\left\|\left[b_{N}\left(s,u_{N+1}^{ \varepsilon,h_ \varepsilon}(s,y)\right)-b_{N}\left(s,u_N^{ \varepsilon,h_ \varepsilon}(s,y)\right)\right]{\bf 1}_{G_{N+1}(s,y)}\right\|_k\notag\\
=&\left\|\left[b_{N}\left(s,u_{N+1}^{ \varepsilon,h_ \varepsilon}(s,y)\right)-b_{N}\left(s,u_N^{ \varepsilon,h_ \varepsilon}(s,y)\right)\right]{\bf 1}_{G_{N+1}(s,y)}\right\|_k\notag\\
\leq&\left\|\left[b_{N}\left(s,u_{N+1}^{ \varepsilon,h_ \varepsilon}(s,y)\right)-b_{N}\left(s,u_N^{ \varepsilon,h_ \varepsilon}(s,y)\right)\right]\right\|_k\notag\\
\leq &L_{N,b}\left\|u_{N+1}^{ \varepsilon,h_ \varepsilon}(s,y)-u_N^{ \varepsilon,h_ \varepsilon}(s,y)\right\|_k\notag\\
\leq&L_{N,b}e^{\alpha s}\mathcal{N}_{k,\alpha,T}\left(u_{N+1}^{ \varepsilon,h_\varepsilon}-u_N^{ \varepsilon,h_\varepsilon}\right)
\end{align}
for all $\alpha,N>0$, $s\in[0,T]$, $y\in\R$ and for the fixed $\e>0$. On the other hand, by the inequality $\|X{\bf 1}_F\|_k\leq\|X\|_{2k}[\mathbb{P}(F)]^{1/2k}$ for all $x\in L^k(\Omega)$ and all events $F\subset \Omega$ (the Cauchy-Schwarz inequality), we have
\begin{align*}
&\left\|\left[b_{N+1}\left(s,u_{N+1}^{ \varepsilon,h_ \varepsilon}(s,y)\right)-b_N\left(s,u_N^{ \varepsilon,h_ \varepsilon}(s,y)\right)\right]{\bf 1}_{\Omega\setminus G_{N+1}(s,y)}\right\|_k\notag\\
\leq&\left\|\left[b_{N+1}\left(s,u_{N+1}^{ \varepsilon,h_ \varepsilon}(s,y)\right)\right]{\bf 1}_{\Omega\setminus G_{N+1}(s,y)}\right\|_k+\left\|\left[b_N\left(s,u_N^{ \varepsilon,h_ \varepsilon}(s,y)\right)\right]{\bf 1}_{\Omega\setminus G_{N+1}(s,y)}\right\|_k\notag\\
\leq&\left[\left\|b_{N+1}\left(s,u_{N+1}^{ \varepsilon,h_ \varepsilon}(s,y)\right)\right\|_{2k}+\left\|b_{N}\left(s,u_{N}^{ \varepsilon,h_ \varepsilon}(s,y)\right)\right\|_{2k}\right]\left[1-\mathbb{P}\left(G_{N+1}(s,y)\right)\right]^{\frac{1}{2k}}.
\end{align*}
Recall that $C_{3,1}=4(\|u_0\|_{L^{\infty}(\R)}+1)$. By Lemma \ref{lemma_bound1}, we have
\begin{align*}
&\left\|b_{N+1}\left(s,u_{N+1}^{ \varepsilon,h_ \varepsilon}(s,y)\right)\right\|_{2k}+\left\|b_{N}\left(s,u_{N}^{ \varepsilon,h_ \varepsilon}(s,y)\right)\right\|_{2k}\notag\\
\leq&L_b\left[\left\|u_{N+1}^{ \varepsilon,h_ \varepsilon}(s,y)\right\|_{2k}+\left\|u_N^{ \varepsilon,h_ \varepsilon}(s,y)\right\|_{2k}\right]\notag\\
\leq&2C_{3,1}L_be^{8(2\sqrt{k\varepsilon}L_{\sigma}+ML_{\sigma})^4t}
\end{align*}
uniformly for all $N,s>0$, $y\in \R$,   $k\geq 2$ and for the fixed $\e>0$ and $M\geq L_b^{1/4}L_{\sigma}^{-1}$.
 This yields the following inequality:
\begin{align*}
&\left\|\left[b_{N+1}\left(s,u_{N+1}^{ \varepsilon,h_ \varepsilon}(s,y)\right)-b_N\left(s,u_N^{ \varepsilon,h_ \varepsilon}(s,y)\right)\right]{\bf 1}_{\Omega\setminus G_{N+1}(s,y)}\right\|_k\notag\\
\leq&2C_{3,1}L_b e^{8(2\sqrt{k\varepsilon}L_{\sigma}+ML_{\sigma})^4s}\cdot\left[\mathbb{P}\left(\left|u_{N+1}^{ \varepsilon,h_ \varepsilon}(s,y)\right|\geq e^N\right)\right]^{\frac{1}{2k}}
\end{align*}
uniformly for all $N,s>0$, $y\in \R$,   $k\geq 2$  and for the fixed $\e>0$ and $M\geq L_b^{1/4}L_{\sigma}^{-1}$. Therefore, Lemma \ref{lemma_tail_EST} yields that
\begin{align}\label{4EST_I1_2}
&\left\|\left[b_{N+1}\left(s,u_{N+1}^{ \varepsilon,h_ \varepsilon}(s,y)\right)-b_N\left(s,u_N^{ \varepsilon,h_ \varepsilon}(s,y)\right)\right]{\bf 1}_{\Omega\setminus G_{N+1}(s,y)}\right\|_k\notag\\
\leq&2C_{3,1}L_b e^{8(2\sqrt{k\varepsilon}L_{\sigma}+ML_{\sigma})^4s}\cdot\exp\left(-\frac{N^{\frac{3}{2}}}{2k\cdot2^{8}\e L_{\sigma}^2s^{\frac{1}{2}}}\right).
\end{align}
valid uniformly for all $N\geq \max\left\{\left(\max(2^{13}\e^2L_{\sigma}^4t,2^{11}\e^2L_{b}t)\right),4\left[\log \left(4\left(\|u_0\|_{L^\infty(\R)}+1\right)\right)+64M^4L_{\sigma}^4t\right]\right\}$,   $s>0$, $y\in \R$, $k\geq2$  and for the fixed $\e>0$ and $M\geq L_b^{1/4}L_{\sigma}^{-1}$. Thus,  we have
\begin{align}\label{4EST_I1_3}
I_1(t,x)\leq& L_{N,b}e^{\alpha t}\mathcal{N}_{k,\alpha,T}\left(u_{N+1}^{ \varepsilon,h_\varepsilon}-u_N^{ \varepsilon,h_\varepsilon}\right)\int^t_0\int_{\R}e^{-\alpha(t-s)}p_{t-s}(y-x) \d s\d y\notag\\
&+2C_{3,1}L_b\int^t_0\int_{\R}e^{8(2\sqrt{k\varepsilon}L_{\sigma}+ML_{\sigma})^4s}\cdot\exp\left(-\frac{N^{\frac{3}{2}}}{2k\cdot2^{8}\e L_{\sigma}^2s^{\frac{1}{2}}}\right)p_{t-s}(y-x) \d s\d y\notag\\
\leq&\frac{L_{N,b}e^{\alpha t}}{\alpha}\mathcal{N}_{k,\alpha,T}\left(u_{N+1}^{ \varepsilon,h_\varepsilon}-u_N^{ \varepsilon,h_\varepsilon}\right)+2C_{3,1}L_b\int^t_0e^{8(2\sqrt{k\varepsilon}L_{\sigma}+ML_{\sigma})^4s}\cdot\exp\left(-\frac{N^{\frac{3}{2}}}{2k\cdot2^{8}\e L_{\sigma}^2s^{\frac{1}{2}}}\right)\d s\notag\\
\leq&\frac{L_{N,b}e^{\alpha t}}{\alpha}\mathcal{N}_{k,\alpha,T}\left(u_{N+1}^{ \varepsilon,h_\varepsilon}-u_N^{ \varepsilon,h_\varepsilon}\right)\notag\\
&+\frac{C_{3,1}L_b}{4(2\sqrt{k\varepsilon}L_{\sigma}+ML_{\sigma})^4}\exp\left(8(2\sqrt{k\varepsilon}L_{\sigma}+ML_{\sigma})^4t-\frac{N^{\frac{3}{2}}}{2^{9}k\e L_{\sigma}^2t^{\frac{1}{2}}}\right)
\end{align}
uniformly for all $t\in [0,T]$, $x\in\R$ and for the fixed $\e>0$ and $M\geq L_b^{1/4}L_{\sigma}^{-1}$, provided that $N,k\geq c$ for a complicated looking but otherwise unimportant number $c=c\left(\|u_0\|_{L^\infty(\R)}, L_{\sigma}, T, L_b\right)>1$. For later purpose, we pause to mention that the number $c$, while fixed, can be chosen to be as large as we wish. Owing to Condition \eqref{new15}, we select $c=c\left(\|u_0\|_{L^\infty(\R)}, L_{\sigma}, T, L_b\right)$ large enough to additionally ensure that
\begin{align}\label{new37}
c^2\e^2+\frac{M^4}{16}>\sup_{N\geq N^{\e}_0}\frac{L_{N,b}}{L^4_{N,\sigma}},\qquad\text{where}\,\,N^{\e}_0=\inf\{N>1:L_{N,\sigma}\geq1\}.
\end{align}
The number $N^{\e}_0$ is well defined and finite thanks to \eqref{LNBinfty}.
For  $I_2(t,x)$, by the asymptotically optimal form of the Burkholder-Davis-Gundy inequality, we have
\begin{align*}
\left[I_2(t,x)\right]^2\leq 4k\e \int^t_0\int_{\R}[p_{t-s}(y-x)]^2\left\|\sigma_{N+1}\left(s,u_{N+1}^{ \varepsilon,h_ \varepsilon}(s,y)\right)-\sigma_N\left(s,u_N^{ \varepsilon,h_ \varepsilon}(s,y)\right)\right\|^2_k \d s\d y
\end{align*}
Similarly to \eqref{4EST_I1_1}, we have
\begin{align*}
&\left\|\left[\sigma_{N+1}\left(s,u_{N+1}^{ \varepsilon,h_ \varepsilon}(s,y)\right)-\sigma_N\left(s,u_N^{ \varepsilon,h_ \varepsilon}(s,y)\right)\right]{\bf 1}_{G_{N+1}(s,y)}\right\|_k\notag\\
\leq&L_{N,\sigma}e^{\alpha s}\mathcal{N}_{k,\alpha,T}\left(u_{N+1}^{ \varepsilon,h_\varepsilon}-u_N^{ \varepsilon,h_\varepsilon}\right)
\end{align*}
for all $\alpha,N>0$, $s\in[0,T]$, $y\in\R$  and for the fixed $\e>0$.
Moreover,

\begin{align*}
&\left\|\left[\sigma_{N+1}\left(s,u_{N+1}^{ \varepsilon,h_ \varepsilon}(s,y)\right)-\sigma_N\left(s,u_N^{ \varepsilon,h_ \varepsilon}(s,y)\right)\right]{\bf 1}_{\Omega\setminus G_{N+1}(s,y)}\right\|_k\notag\\
\leq&\left[\left\|\sigma_{N+1}\left(s,u_{N+1}^{ \varepsilon,h_ \varepsilon}(s,y)\right)\right\|_{2k}+\left\|\sigma_{N}\left(s,u_{N}^{ \varepsilon,h_ \varepsilon}(s,y)\right)\right\|_{2k}\right]\left[1-\mathbb{P}\left(G_{N+1}(s,y)\right)\right]^{\frac{1}{2k}}.
\end{align*}
Recall that $C_{3,1}=4(\|u_0\|_{L^{\infty}(\R)}+1)$. By Lemma \ref{lemma_bound1}, we have
\begin{align*}
&\left\|\sigma_{N+1}\left(s,u_{N+1}^{ \varepsilon,h_ \varepsilon}(s,y)\right)\right\|_{2k}+\left\|\sigma_{N}\left(s,u_{N}^{ \varepsilon,h_ \varepsilon}(s,y)\right)\right\|_{2k}\notag\\
\leq&L_\sigma\left[\left\|u_{N+1}^{ \varepsilon,h_ \varepsilon}(s,y)\right\|_{2k}+\left\|u_N^{ \varepsilon,h_ \varepsilon}(s,y)\right\|_{2k}\right]\notag\\
\leq&2C_{3,1}L_\sigma e^{8(2\sqrt{k\varepsilon}L_{\sigma}+ML_{\sigma})^4s}
\end{align*}
uniformly for all $N, s>0$, $y\in \R$,   $k\geq 2$  and for the fixed $\e>0$ and $M\geq L_b^{1/4}L_{\sigma}^{-1}$.
 Therefore, Lemma \ref{lemma_tail_EST} yields that
\begin{align}\label{4EST_I2_2}
&\left\|\left[\sigma_{N+1}\left(s,u_{N+1}^{ \varepsilon,h_ \varepsilon}(s,y)\right)-\sigma_N\left(s,u_N^{ \varepsilon,h_ \varepsilon}(s,y)\right)\right]{\bf 1}_{\Omega\setminus G_{N+1}(s,y)}\right\|_k\notag\\
\leq&2C_{3,1}L_\sigma e^{8(2\sqrt{k\varepsilon}L_{\sigma}+ML_{\sigma})^4s}\cdot\exp\left(-\frac{N^{\frac{3}{2}}}{2k\cdot2^{8}\e L_{\sigma}^2s^{\frac{1}{2}}}\right).
\end{align}
valid uniformly for all $N,s>0$, $y\in \R$,   $k\geq1$,
\begin{align}\label{new38}
N\geq c_T:= \max\left\{\left(\max(2^{13}\e^2L_{\sigma}^4t,2^{11}\e^2L_{b}t)\right),4\left[\log \left(4\left(\|u_0\|_{L^\infty(\R)}+1\right)\right)+64M^4L_{\sigma}^4t\right]\right\}.
\end{align}
  and for the fixed $\e>0$ and $M\geq L_b^{1/4}L_{\sigma}^{-1}$. Thus,  we have
\begin{align}\label{4EST_I2_3}
\left[I_2(t,x)\right]^2\leq& 4k\varepsilon L^2_{N,\sigma}e^{2\alpha t}\left[\mathcal{N}_{k,\alpha,T}\left(u_{N+1}^{ \varepsilon,h_\varepsilon}-u_N^{ \varepsilon,h_\varepsilon}\right)\right]^2\int^t_0\int_{\R}e^{-2\alpha(t-s)}[p_{t-s}(y-x)]^2 \d s\d y\notag\\
&+16k\varepsilon C_{3,1}^2L_\sigma^2\int^t_0\int_{\R}e^{16(2\sqrt{k\varepsilon}L_{\sigma}+ML_{\sigma})^4s}\cdot\exp\left(-\frac{N^{\frac{3}{2}}}{k\cdot2^{8}\e L_{\sigma}^2s^{\frac{1}{2}}}\right)[p_{t-s}(y-x)]^2 \d s\d y\notag\\
\leq&\frac{2k\varepsilon L^2_{N,\sigma}e^{2\alpha t}}{\sqrt{\alpha}}\left[\mathcal{N}_{k,\alpha,T}\left(u_{N+1}^{ \varepsilon,h_\varepsilon}-u_N^{ \varepsilon,h_\varepsilon}\right)\right]^2\notag\\
&+16k\varepsilon C_{3,1}^2L_\sigma^2e^{16(2\sqrt{k\varepsilon}L_{\sigma}+ML_{\sigma})^4t}\cdot\exp\left(-\frac{N^{\frac{3}{2}}}{k\cdot2^{8}\e L_{\sigma}^2t^{\frac{1}{2}}}\right)\sqrt{t},
\end{align}
for every choice of $\alpha>0$, $t\in[0,T]$, $x\in\R$ and for the fixed $\e>0$ and $M\geq L_b^{1/4}L_{\sigma}^{-1}$, provided that $N\geq \max\{c,c_T\}$ and $k\geq c$.
For the $I_3(t,x)$, by the Cauchy-Schwarz inequality, the Minkowski inequality, the fact $\{h_ \varepsilon\}_{\varepsilon>0}\subset \mathcal{X}_M$ and  \eqref{Def_psi_N}, we have
\begin{align}\label{4ESTI3_1}
\left[I_3(t,x)\right]^2\leq&\Bigg\|\int^t_0\int_{\R}[p_{t-s}(y-x)]^2\left[\sigma_{N+1}\left(s,u_{N+1}^{ \varepsilon,h_ \varepsilon}(s,y)\right)-\sigma_N\left(s,u_N^{ \varepsilon,h_ \varepsilon}(s,y)\right)\right]^2\d s\d y\notag\\
&\cdot\int^t_0\int_{\R}[h_ \varepsilon(s,y)]^2\d s\d y\Bigg\|_k\notag\\
\leq&M^2\left\|\int^t_0\int_{\R}[p_{t-s}(y-x)]^2\left[\sigma_{N+1}\left(s,u_{N+1}^{ \varepsilon,h_ \varepsilon}(s,y)\right)-\sigma_N\left(s,u_N^{ \varepsilon,h_ \varepsilon}(s,y)\right)\right]^2\right\|\d s\d y\notag\\
\leq&M^2\int^t_0\int_{\R}[p_{t-s}(y-x)]^2\left\|\sigma_{N+1}\left(s,u_{N+1}^{ \varepsilon,h_ \varepsilon}(s,y)\right)-\sigma_N\left(s,u_N^{ \varepsilon,h_ \varepsilon}(s,y)\right)\right\|_k^2\d s\d y\notag\\
\leq&\frac{M^2 L^2_{N,\sigma}e^{2\alpha t}}{2\sqrt{\alpha}}\left[\mathcal{N}_{k,\alpha,T}\left(u_{N+1}^{ \varepsilon,h_\varepsilon}-u_N^{ \varepsilon,h_\varepsilon}\right)\right]^2\notag\\
&+4M^2 C_{3,1}^2L_\sigma^2e^{16(2\sqrt{k\varepsilon}L_{\sigma}+ML_{\sigma})^4t}\cdot\exp\left(-\frac{N^{\frac{3}{2}}}{k\cdot2^{8}\e L_{\sigma}^2t^{\frac{1}{2}}}\right)\sqrt{t},
\end{align}
for every choice of $\alpha>0$, $t\in[0,T]$, $x\in\R$ and for the fixed $\e>0$ and $M\geq L_b^{1/4}L_{\sigma}^{-1}$,  provided that $N\geq \max\{c,c_T\}$, $k\geq c$.
By \eqref{4_1123}, \eqref{4EST_I1_3}, \eqref{4EST_I2_3} and \eqref{4ESTI3_1}, we have
\begin{align}\label{4EST_I1_3111ADD}
&\left\|u_{N+1}^{ \varepsilon,h_\varepsilon}(t,x)-u_{N}^{ \varepsilon,h_\varepsilon}(t,x)\right\|_k\notag\\
\leq& e^{\alpha t}\left(\frac{L_{N,b}}{\alpha}+\frac{\Big(\sqrt{2k\varepsilon} +M/\sqrt{2}\Big)L_{N,\sigma}}{\alpha^{1/4}}\right)\mathcal{N}_{k,\alpha,T}\left(u_{N+1}^{ \varepsilon,h_\varepsilon}-u_N^{ \varepsilon,h_\varepsilon}\right)\notag\\
&+C_{3,1}\left(\frac{L_b}{4(2\sqrt{k\varepsilon}L_{\sigma}+ML_{\sigma})^4}+(4\sqrt{k\varepsilon}+2M) L_\sigma t^{\frac{1}{4}} \right)\notag\\
&\qquad\cdot\exp\left(8(2\sqrt{k\varepsilon}L_{\sigma}+ML_{\sigma})^4t-\frac{N^{\frac{3}{2}}}{2^{9}k\e L_{\sigma}^2t^{\frac{1}{2}}}\right)
\end{align}
as long as $N\geq \max(c,c_T)$ and $k\geq c$.

We make particular choices of $k\geq c$ and $\alpha>0$ as follows:
\begin{align}\label{new3.9}
k=c~~\text{and}~~\alpha=(16k^2\e^2+M^4)A_0^4L_{N,\sigma}^4,
\end{align}
where
\begin{align}\label{new3.10}
A_0=\max\left(2\sqrt{2}L_{\sigma},4\right).
\end{align}
We pause to emphasize that $\alpha$ depends on $N$ and $T$. In this way, we find that
\begin{align*}
&\sup_{x\in\R}\left\|u_{N+1}^{ \varepsilon,h_\varepsilon}(t,x)-u_{N}^{ \varepsilon,h_\varepsilon}(t,x)\right\|_c\notag\\
\leq& e^{\alpha t}\left(\frac{L_{N,b}}{(16c^2\e^2+M^4)A_0^4L_{N,\sigma}^4}+\frac{1}{A_0}\right)\mathcal{N}_{c,\alpha,T}\left(u_{N+1}^{ \varepsilon,h_\varepsilon}-u_N^{ \varepsilon,h_\varepsilon}\right)\notag\\
&+C_{3,1}\left(\frac{L_b}{4(2\sqrt{c\varepsilon}L_{\sigma}+ML_{\sigma})^4}+(4\sqrt{c\varepsilon}+2M) L_\sigma t^{\frac{1}{4}} \right)\notag\\
&\qquad\cdot\exp\left(8(2\sqrt{c\varepsilon}L_{\sigma}+ML_{\sigma})^4t-\frac{N^{\frac{3}{2}}}{2^{9}c\e L_{\sigma}^2t^{\frac{1}{2}}}\right)
\end{align*}
uniformly for all $T>0$, $t\in(0,T]$, $N\geq \max\{N^{\e}_0,c,c_T\}$ and for the fixed $\e>0$ and $M\geq L_b^{1/4}L_{\sigma}^{-1}$. Since $A_0>4$, by \eqref{new37}, we have that
\begin{align*}
&\sup_{x\in\R}\left\|u_{N+1}^{ \varepsilon,h_\varepsilon}(t,x)-u_{N}^{ \varepsilon,h_\varepsilon}(t,x)\right\|_c\notag\\
\leq& e^{\alpha t}\left(\frac{1}{16A_0^4}+\frac{1}{A_0}\right)\mathcal{N}_{c,\alpha,T}\left(u_{N+1}^{ \varepsilon,h_\varepsilon}-u_N^{ \varepsilon,h_\varepsilon}\right)\notag\\
&+C_{3,1}\left(\frac{L_b}{4(2\sqrt{c\varepsilon}L_{\sigma}+ML_{\sigma})^4}+(4\sqrt{c\varepsilon}+2M) L_\sigma t^{\frac{1}{4}} \right)\notag\\
&\qquad\cdot\exp\left(8(2\sqrt{c\varepsilon}L_{\sigma}+ML_{\sigma})^4t-\frac{N^{\frac{3}{2}}}{2^{9}c\e L_{\sigma}^2t^{\frac{1}{2}}}\right)\\
\leq& e^{\alpha t}\left(\frac{1}{4096}+\frac{1}{4}\right)\mathcal{N}_{c,\alpha,T}\left(u_{N+1}^{ \varepsilon,h_\varepsilon}-u_N^{ \varepsilon,h_\varepsilon}\right)\notag\\
&+C_{3,1}\left(\frac{L_b}{4(2\sqrt{c\varepsilon}L_{\sigma}+ML_{\sigma})^4}+(4\sqrt{c\varepsilon}+2M) L_\sigma t^{\frac{1}{4}} \right)\notag\\
&\qquad\cdot\exp\left(8(2\sqrt{k\varepsilon}L_{\sigma}+ML_{\sigma})^4t-\frac{N^{\frac{3}{2}}}{2^{9}c\e L_{\sigma}^2t^{\frac{1}{2}}}\right),
\end{align*}
valid uniformly for all $T>0$, $t\in(0,T]$,  $N\geq\max\{N_0,c,c_T\}$ and for the fixed $\e>0$ and $M\geq L_b^{1/4}L_{\sigma}^{-1}$. Since $\frac{1}{4096}+\frac{1}{4}<\frac{1}{2}$, we may divide both sides of the preceding by $e^{\alpha t}$ and optimize over $t\in(0,T]$ in order to find that
\begin{align}\label{New311}
&\mathcal{N}_{c,\alpha,T}\left(u_{N+1}^{ \varepsilon,h_\varepsilon}-u_N^{ \varepsilon,h_\varepsilon}\right)\notag\\
\leq&2C_{3,1}\left(\frac{L_b}{4(2\sqrt{c\varepsilon}L_{\sigma}+ML_{\sigma})^4}+(4\sqrt{c\varepsilon}+2M) L_\sigma t^{\frac{1}{4}} \right)\notag\\
&\qquad\cdot\sup_{t\in(0,T]}\exp\left(-\left(\alpha-8(2\sqrt{c\varepsilon}L_{\sigma}+ML_{\sigma})^4\right)t-\frac{N^{\frac{3}{2}}}{2^{9}c\e L_{\sigma}^2t^{\frac{1}{2}}}\right),
\end{align}
uniformly for all $T>0$, $N\geq\max\{N^{\e}_0,c,c_T\}$ and for the fixed $\e>0$ and $M\geq L_b^{1/4}L_{\sigma}^{-1}$. By \eqref{new37}, \eqref{new3.9} and \eqref{new3.10}, we have
\begin{align*}
\alpha-8(2\sqrt{c\varepsilon}L_{\sigma}+ML_{\sigma})^4=&(16c^2\e^2+M^4)A_0^4L_{N,\sigma}^4-8(2\sqrt{c\varepsilon}+M)^4L_{\sigma}^4\\
\geq&\left(\sqrt{2c\e}+\frac{M}{\sqrt{2}}\right)^4A_0^4L_{N,\sigma}^4-8(2\sqrt{c\varepsilon}+M)^4L_{\sigma}^4\\
\geq&\left(\sqrt{2c\e}+\frac{M}{\sqrt{2}}\right)^4A_0^4-8(2\sqrt{c\varepsilon}+M)^4L_{\sigma}^4\\
=&\left(\sqrt{2c\e}+\frac{M}{\sqrt{2}}\right)^4(A_0^4-32L_{\sigma}^4)>0,
\end{align*}
uniformly for all $N\geq N_0$ and for the fixed $\e>0$ and $M\geq L_b^{1/4}L_{\sigma}^{-1}$. Now, let us consider the following function that appears in the exponent on the right-hand side of \eqref{New311}. Set
\begin{align*}
\varphi(t)=\left(\alpha-8(2\sqrt{c\varepsilon}+M)^4L_{\sigma}^4\right)t+\frac{N^{\frac{3}{2}}}{2^{10}c\e L_{\sigma}^2\sqrt{t}}\quad\text{for}\quad t>0.
\end{align*}
It follows that for $\forall t\in(0,T]$,
\begin{align*}
\varphi'(t)\leq& \alpha-\frac{\left(N/T\right)^{\frac{3}{2}}}{2^{10}c\e L_{\sigma}^2}\\
=&(16c^2\e^2+M^4)A_0^4L_{N,\sigma}^4-\frac{\left(N/T\right)^{\frac{3}{2}}}{2^{10}c\e L_{\sigma}^2}.
\end{align*}
Thus, it implies that $\varphi'<0$ everywhere on $(0,T]$ provided that
\begin{align}\label{nwe312}
\frac{N}{L_{N,\sigma}^{8/3}}>2^{24}\left[\left(16c^2\e^2+M^4\right)c\e\right]^{2/3}L_{\sigma}^{4/3}A_0^{8/3}T=:N_T^{\e}.
\end{align}
The left-hand side of \eqref{nwe312} is well defined for example when $N^{\e}_T>N^{\e}_0$. Condition \eqref{new15} ensures that the left-hand side tends to infinity as $N\rightarrow \infty$. Therefore, \eqref{nwe312} holds for every $N>\max\{N^{\e}_0,N^{\e}_T\}$. It follows that

\begin{align*}
\inf_{t\in(0,T]}\varphi(t)=\varphi(T)\qquad\forall N>\max\{N^{\e}_0,N^{\e}_T\},\quad T>0,
\end{align*}
whence it follows from \eqref{New311} that, for every $T_0>0$ and $N\geq \max\{N^{\e}_0,N_{T_0},c_0,c_{T_0}\}$,
\begin{align*}
&\mathcal{N}_{k,\alpha,T}\left(u_{N+1}^{ \varepsilon,h_\varepsilon}-u_N^{ \varepsilon,h_\varepsilon}\right)\notag\\
\leq&2C_{3,1}\left(\frac{L_b}{4(2\sqrt{c\varepsilon}L_{\sigma}+ML_{\sigma})^4}+(4\sqrt{c\varepsilon}+2M) L_\sigma T^{\frac{1}{4}} \right)\notag\\
&\qquad\cdot\sup_{t\in(0,T]}\exp\left(-\left(\alpha-8(2\sqrt{c\varepsilon}+M)^4L_{\sigma}^4\right)T-\frac{N^{\frac{3}{2}}}{2^{9}kc\e L_{\sigma}^2\sqrt{T}}\right),
\end{align*}
uniformly for every  $T\in(0,T_0)$. In order to ensure the uniformity statement of $T$, we have also used the fact that $T\mapsto c_T$ is increasing. We now apply Lemma \ref{lemma7} in order to deduce from the above and \eqref{new32} that, for every $T>0$ fixed,
\begin{align}\label{new313}
\limsup_{N\rightarrow\infty}N^{-3/2}\log\sup_{t\in(0,T]}\sup_{x\in \mathbb{R}}\|u_{N+1}^{ \varepsilon,h_\varepsilon}(t,x)-u_N^{ \varepsilon,h_\varepsilon}(t,x)\|_k<0\qquad \forall k\geq1.
\end{align}
When $k\in[1,c]$, this follows from the preceding. For general $k$, it follows from a relabeling $[k\leftrightarrow c]$, and an appeal to the fact $c$ can be as large as we wish. And for each $M\geq L_b^{1/4}L_{\sigma}^{-1}$, \eqref{new313} holds. Thus, for any fixed $T>0$, we have
\begin{align}\label{new31}
\sum_{n=1}^{\infty}\sup_{t\in[0,T]}\sup_{x\in\R}\left\|u_{N+1}^{ \varepsilon,h_\varepsilon}(t,x)-u_{N}^{ \varepsilon,h_\varepsilon}(t,x)\right\|_k<\infty\qquad \forall k\geq 1.
\end{align}
Since $T>0$ can be as large as needed, \eqref{new31} implies that the sequence $\left\{u_{N+1}^{ \varepsilon,h_\varepsilon}(t,x)\right\}_{t\in[0,T],x\in\R}$ is a Cauchy sequence. Thus, the random variable
\begin{align}\label{limit}
u(t,x)=\lim_{N\rightarrow\infty}u_{N}^{ \varepsilon,h_\varepsilon}(t,x) \qquad \text{exists}\,\,\text{ in}\,\, L^k(\Omega)\quad\text{for}\quad\forall k\geq 1,
\end{align}
and the rate of the convergence does not depend on $t\in(0,T)$ nor on $x\in\R$.  It follows from  \eqref{new313} that the decay rate of the terms does not slow down (indeed, it accelerates as $\e$ decreases), the convergence of the series is uniform in $\e$. As a consequence the rate of the convergence in \eqref{limit} does not depend on $\e\in(0,1]$. Therefore,  $u$  is $L^k(\Omega)$-continuous. The random field $u$ is predictable since it is a limit of predictable random fields.

 {\bf Step 2.} If we prove that for every $t\in[0,T]$ and $x\in \mathbb R$,
\begin{align}\label{Limit_bN}
&\lim_{N\rightarrow\infty}\int^t_0\int_{\mathbb{R}}p_{t-s}(y-x)b_N\left(s,u_{N}^{ \varepsilon,h_\varepsilon}(s,y)\right)\d s\d y\notag\\
=&\int^t_0\int_{\mathbb{R}}p_{t-s}(y-x)b\left(s,u^{ \varepsilon,h_\varepsilon}(s,y)\right)\d s\d y,
\end{align}
\begin{align}\label{LimitesigmaNWdsdy}
&\lim_{N\rightarrow\infty}\sqrt{\e}\int^t_0\int_{\R}p_{t-s}(y-x)\sigma_N\left(s,u_N^{ \varepsilon,h_ \varepsilon}(s,y)\right)W(\d s\d y)\notag\\
=&\sqrt{\e}\int^t_0\int_{\R}p_{t-s}(y-x)\sigma\left(s,u^{ \varepsilon,h_ \varepsilon}(s,y)\right)W(\d s\d y),
\end{align}
and
\begin{align}\label{Limit_sigmaN}
&\lim_{N\rightarrow\infty}\int^t_0\int_{\mathbb{R}}p_{t-s}(y-x)\sigma_N\left(s,u_{N}^{ \varepsilon,h_\varepsilon}(s,y)\right)h_{\e}(s,y)\d s\d y\notag\\
=&\int^t_0\int_{\mathbb{R}}p_{t-s}(y-x)\sigma\left(s,u_{N}^{ \varepsilon,h_\varepsilon}(s,y)\right)h_{\e}(s,y)\d s\d y,
\end{align}
then $u=u^{ \varepsilon,h_\varepsilon}$ is the mild solution to Eq.\,\eqref{eqnhe} by \eqref{limit} and \eqref{u_Nehe}. Hence, Lemma \ref{lemma_EST_uhe} is proved.

It remains to prove \eqref{Limit_bN}, \eqref{LimitesigmaNWdsdy} and \eqref{Limit_sigmaN}. By the triangle inequality and the Minkowski inequality, we have
\begin{align}\label{42EST_BN1}
&\left\|\int^t_0\int_{\mathbb{R}}p_{t-s}(y-x)b_N\left(s,u_{N}^{\e,h_\e}(s,y)\right)\d s\d y-\int^t_0\int_{\mathbb{R}}p_{t-s}(y-x)b\left(s,u^{\e,h_\e}(s,y)\right)\d s\d y\right\|_2\notag\\
=&\Bigg\|\int^t_0\int_{\mathbb{R}}p_{t-s}(y-x)\left[b_N\left(s,u_{N}^{\e,h_\e}(s,y)\right)-b\left(s,u_{N}^{\e,h_\e}(s,y)\right)\right]\d s\d y\notag\\
&+\int^t_0\int_{\mathbb{R}}p_{t-s}(y-x)\left[b\left(s,u_{N}^{\e,h_\e}(s,y)\right)-b\left(s,u^{\e,h_\e}(s,y)\right)\right]\d s\d y\Bigg\|_2\notag\\
\leq&\int^t_0\int_{\mathbb{R}}p_{t-s}(y-x)\left\|b_N\left(s,u_{N}^{\e,h_\e}(s,y)\right)-b\left(s,u_{N}^{\e,h_\e}(s,y)\right)\right\|_2\d s\d y\notag\\
&+\int^t_0\int_{\mathbb{R}}p_{t-s}(y-x)\left\|b\left(s,u_{N}^{\e,h_\e}(s,y)\right)-b\left(s,u^{\e,h_\e}(s,y)\right)\right\|_2\d s\d y.
\end{align}
Inspired by \cite{FKN24} that for all $z\in\mathbb R$ and $ t, N>0$, we have
\begin{align}\label{newbn}
\left|b_N(t,z)-b(t,z)\right| \leq {\bf1}_{\mathbb{R} \setminus {[-e^N, e^N]}}(z)(|b(t,z)|+|b_N(t,z)|)\leq2L_b(1+|z|){\bf1}_{\mathbb{R} \setminus {[-e^N, e^N]}}(z).
\end{align}
Thus,
\begin{align}\label{42EST_BN2}
&\int^t_0\int_{\mathbb{R}}p_{t-s}(y-x)\left\|b_N\left(s,u^{\e,h_\e}_N(s,y)\right)-b\left(s,u^{\e,h_\e}_N(s,y)\right)\right\|_2\d s\d y\notag\\
\leq&2L_b\int^t_0\int_{\mathbb{R}}p_{t-s}(y-x)\left(\mathbb{E}\left(1+\left|u_{N}^{\e,h_\e}(s,y)\right|^2;\left|u_{N}^{\e,h_\e}(s,y)\right|>e^N
\right)\right)^{\frac{1}{2}}\d s\d y.
\end{align}
If $X\geq0$ is a random variable and $A>0$ is a constant, then by the Cauchy-Schwarz inequality and the Chebyshev inequality, we have
\begin{align}\label{EX}
\mathbb{E}\left(X^2;X>A\right)\leq\left[\mathbb{E}(X^4)\mathbb{P}(|X|>A)\right]^{\frac{1}{2}}\leq A^{-2}\mathbb{E}(X^4).
\end{align}
By Lemmas \ref{lemma_bound1} and \ref{lemma_bound2} and \eqref{EX}, we have that for every $\e>0$,
  \begin{align}\label{42EST_BN3}
  \lim_{N\rightarrow\infty}\sup_{s\in(0,T)}\sup_{y\in\R}\mathbb{E}\left(1+\left|u_{N}^{\e,h_\e}(s,y)\right|^2;\left|u_{N}^{\e,h_\e}(s,y)\right|>e^N
\right)=0.
 \end{align}
 By \eqref{42EST_BN2} and \eqref{42EST_BN3}, we have
 \begin{align}\label{42EST_BN4}
\left\| \lim_{N\rightarrow\infty}\int^t_0\int_{\mathbb{R}}p_{t-s}(y-x)b_N\left(s,u_{N}^{\e,h_\e}(s,y)\right)-b\left(s,u_{N}^{\e,h_\e}(s,y)\right)\d s\d y\right\|_2=0
  \end{align}
Since $b$ has at most linear growth and $$\left|b\left(s,u_{N}^{\e,h_\e}(s,y)\right)-b\left(s,u^{\e,h_\e}(s,y)\right)\right|\leq \left|b\left(s,u_{N}^{\e,h_\e}(s,y)\right)\right|+\left|b\left(s,u^{\e,h_\e}(s,y)\right)\right|,$$ Lemmas \ref{lemma_bound1} and \ref{lemma_bound2} ensure that $\left|b\left(s,u_{N}^{\e,h_\e}(s,y)\right)-b\left(s,u^{\e,h_\e}(s,y)\right)\right|$ is bounded uniformly in $s\in[0,T]$, $N>0$, $\e>0$ and $y\in \mathbb{R}$. By \eqref{limit}, uniform integrability and continuity of $b$, we have that for every $s>0$, $y\in\R$ and $\e>0$,
\begin{align*}
\lim_{N\rightarrow\infty}b\left(s,u_{N}^{\e,h_\e}(s,y)\right)=b\left(s,u^{\e,h_\e}(s,y)\right)\,\,\text{in}\,\,L^2(\Omega).
\end{align*}
Therefore, the dominated convergence theorem yields that for all $t\in[0,T]$ , $x\in\mathbb{R}$ and $\e>0$,
\begin{align}\label{42EST_BN5}
\int^t_0\int_{\mathbb{R}}p_{t-s}(y-x)\left\|b\left(s,u_N^{\e,h_\e}(s,y)\right)-b\left(s,u^{\e,h_\e}(s,y)\right)\right\|_2\d s\d y&\rightarrow0\,\,\text{as}\,\,N\rightarrow\infty.
\end{align}
By \eqref{42EST_BN1}, \eqref{42EST_BN4} and \eqref{42EST_BN5}, we have \eqref{Limit_bN}.

Similarly to \eqref{newbn},
\begin{align}\label{sigmaN1}
\left|\sigma_N(t,z)-\sigma(t,z)\right| \leq2L_\sigma\left(1+|z|\right){\bf1}_{\mathbb{R} \setminus {[-e^N, e^N]}}(z)
\end{align}
for all $z\in\mathbb R$ and $ t, N>0$.
By the triangle inequality,  we have
\begin{align}\label{43EST_BN1}
&\left\|\int^t_0\int_{\R}p_{t-s}(y-x)\left[\sigma_N\left(s,u_N^{ \varepsilon,h_ \varepsilon}(s,y)\right)-\sigma\left(s,u_N^{ \varepsilon,h_ \varepsilon}(s,y)\right)\right]W(\d s\d y)\right\|_2^2\notag\\
\leq&\left\|\int^t_0\int_{\mathbb{R}}p_{t-s}(y-x)\left[\sigma_N\left(s,u_{N}^{\e,h_\e}(s,y)\right)-\sigma\left(s,u_{N}^{\e,h_\e}(s,y)\right)\right]W(\d s\d y)\right\|_2^2\notag\\
&+\left\|\int^t_0\int_{\mathbb{R}}p_{t-s}(y-x)\left[\sigma\left(s,u_{N}^{\e,h_\e}(s,y)\right)-\sigma\left(s,u^{\e,h_\e}(s,y)\right)\right]W(\d s\d y)\right\|_2^2.
\end{align}
By the $L^2(\Omega)$-isometry of stochastic integrals and \eqref{sigmaN1}, we have
\begin{align}\label{43EST_BN2}
&\left\|\int^t_0\int_{\mathbb{R}}p_{t-s}(y-x)\left[\sigma_N\left(s,u_{N}^{\e,h_\e}(s,y)\right)-\sigma\left(s,u_{N}^{\e,h_\e}(s,y)\right)\right]W(\d s\d y)\right\|_2^2\notag\\
\leq&\int^t_0\int_{\mathbb{R}}[p_{t-s}(y-x)]^2\left\|\sigma_N\left(s,u_{N}^{\e,h_\e}(s,y)\right)-\sigma\left(s,u_{N}^{\e,h_\e}(s,y)\right)\right\|^2_2\d s\d y\notag\\
\leq&4L_\sigma^2\int^t_0\int_{\mathbb{R}}[p_{t-s}(y-x)]^2\left(\mathbb{E}\left(1+\left|u_{N}^{\e,h_\e}(s,y)\right|^2;\left|u_{N}^{\e,h_\e}(s,y)\right|>e^N
\right)\right)^{\frac{1}{2}}\d s\d y.
\end{align}
By \eqref{43EST_BN2} and \eqref{42EST_BN3}, we have
 \begin{align}\label{43EST_BN3}
\left\| \lim_{N\rightarrow\infty}\sqrt{\e}\int^t_0\int_{\mathbb{R}}p_{t-s}(y-x)\sigma_N\left(s,u_{N}^{\e,h_\e}(s,y)\right)-\sigma\left(s,u_{N}^{\e,h_\e}(s,y)\right)W(\d s\d y)\right\|_2=0.
  \end{align}
Since $\sigma$ has at most linear growth and $$\left|\sigma\left(s,u_{N}^{\e,h_\e}(s,y)\right)-\sigma\left(s,u^{\e,h_\e}(s,y)\right)\right|\leq \left|\sigma\left(s,u_{N}^{\e,h_\e}(s,y)\right)\right|+\left|\sigma\left(s,u^{\e,h_\e}(s,y)\right)\right|,$$ Lemmas \ref{lemma_bound1} and \ref{lemma_bound2} ensure that $\left|\sigma\left(s,u_{N}^{\e,h_\e}(s,y)\right)-\sigma\left(s,u^{\e,h_\e}(s,y)\right)\right|$ is bounded uniformly in $s\in[0,T]$, $N>0$, $y\in \mathbb{R}$ and $\e>0$. By \eqref{limit}, uniform integrability and continuity of $\sigma$, we have that for every $s>0$ and $y\in\R$,
\begin{align*}
\lim_{N\rightarrow\infty}\sigma\left(s,u_{N}^{\e,h_\e}(s,y)\right)=\sigma\left(s,u^{\e,h_\e}(s,y)\right)\,\,\text{in}\,\,L^2(\Omega).
\end{align*}
Therefore, the dominated convergence theorem yields that for all $t\in[0,T]$ and $x\in\mathbb{R}$, we have
\begin{align}\label{43EST_BN4}
&\left\|\int^t_0\int_{\mathbb{R}}p_{t-s}(y-x)\left[\sigma\left(s,u_{N}^{\e,h_\e}(s,y)\right)-\sigma\left(s,u^{\e,h_\e}(s,y)\right)\right]\d s\d y\right\|_2^2\notag\\
\leq&\int^t_0\int_{\mathbb{R}}[p_{t-s}(y-x)]^2\left\|\sigma\left(s,u_{N}^{\e,h_\e}(s,y)\right)-\sigma\left(s,u^{\e,h_\e}(s,y)\right)\right\|^2_2\d s\d y\notag\\
&\rightarrow0\,\,\text{as}\,\,N\rightarrow\infty.
\end{align}
By \eqref{43EST_BN1}, \eqref{42EST_BN3} and \eqref{43EST_BN4}, we obtain \eqref{LimitesigmaNWdsdy}.
 By the triangle inequality,  we have
\begin{align}\label{44EST_BN1}
&\left\|\int^t_0\int_{\mathbb{R}}p_{t-s}(y-x)\left[\sigma_N\left(s,u_{N}^{ \varepsilon,h_\varepsilon}(s,y)\right)-\sigma\left(s,u^{ \varepsilon,h_\varepsilon}(s,y)\right)\right]h_{\e}(s,y)\d s\d y\right\|_2^2\notag\\
\leq&\left\|\int^t_0\int_{\mathbb{R}}p_{t-s}(y-x)\left[\sigma_N\left(s,u_{N}^{ \varepsilon,h_\varepsilon}(s,y)\right)-\sigma\left(s,u_N^{ \varepsilon,h_\varepsilon}(s,y)\right)\right]h_{\e}(s,y)\d s\d y\right\|_2^2\notag\\
&+\left\|\int^t_0\int_{\mathbb{R}}p_{t-s}(y-x)\left[\sigma\left(s,u_{N}^{ \varepsilon,h_\varepsilon}(s,y)\right)-\sigma\left(s,u^{ \varepsilon,h_\varepsilon}(s,y)\right)\right]h_{\e}(s,y)\d s\d y\right\|_2^2.
\end{align}
By the Cauchy-Schwarz inequality, the fact $\{h_ \varepsilon\}_{\varepsilon>0}\subset \mathcal{X}_M$, the Minkowski inequality and \eqref{43EST_BN4}, we have
\begin{align}\label{44EST_BN2}
&\left\|\int^t_0\int_{\mathbb{R}}p_{t-s}(y-x)\left[\sigma_N\left(s,u_{N}^{ \varepsilon,h_\varepsilon}(s,y)\right)-\sigma\left(s,u_N^{ \varepsilon,h_\varepsilon}(s,y)\right)\right]h_{\e}(s,y)\d s\d y\right\|_2^2\notag\\
\leq&\left\|\int^t_0\int_{\mathbb{R}}[p_{t-s}(y-x)]^2\left[\sigma_N\left(s,u_{N}^{ \varepsilon,h_\varepsilon}(s,y)\right)-\sigma\left(s,u_N^{ \varepsilon,h_\varepsilon}(s,y)\right)\right]^2\d s\d y\cdot \int^t_0\int_{\mathbb{R}}[h_{\e}(s,y)]^2\d s\d y\right\|_2\notag\\
\leq&M^2\left\|\int^t_0\int_{\mathbb{R}}[p_{t-s}(y-x)]^2\left[\sigma_N\left(s,u_{N}^{ \varepsilon,h_\varepsilon}(s,y)\right)-\sigma\left(s,u_N^{ \varepsilon,h_\varepsilon}(s,y)\right)\right]^2\d s\d y\right\|_2\notag\\
\leq&M^2\int^t_0\int_{\mathbb{R}}[p_{t-s}(y-x)]^2\left\|\left[\sigma_N\left(s,u_{N}^{ \varepsilon,h_\varepsilon}(s,y)\right)-\sigma\left(s,u_N^{ \varepsilon,h_\varepsilon}(s,y)\right)\right]\right\|_2^2\d s\d y\notag\\
&\rightarrow0\,\,\text{as}\,\,N\rightarrow\infty.
\end{align}
By the Cauchy-Schwarz inequality and \eqref{43EST_BN4}, we have
\begin{align}\label{44EST_BN3}
&\left\|\int^t_0\int_{\mathbb{R}}p_{t-s}(y-x)\left[\sigma\left(s,u_{N}^{ \varepsilon,h_\varepsilon}(s,y)\right)-\sigma\left(s,u^{ \varepsilon,h_\varepsilon}(s,y)\right)\right]h_{\e}(s,y)\d s\d y\right\|_2^2\notag\\
\leq&\left\|\int^t_0\int_{\mathbb{R}}[p_{t-s}(y-x)]^2\left[\sigma\left(s,u_{N}^{ \varepsilon,h_\varepsilon}(s,y)\right)-\sigma\left(s,u^{ \varepsilon,h_\varepsilon}(s,y)\right)\right]^2\d s\d y\cdot \int^t_0\int_{\mathbb{R}}[ h_{\e}(s,y)]^2\d s\d y\right\|_2\notag\\
\leq&M^2\left\|\int^t_0\int_{\mathbb{R}}[p_{t-s}(y-x)]^2\left[\sigma\left(s,u_{N}^{ \varepsilon,h_\varepsilon}(s,y)\right)-\sigma\left(s,u^{ \varepsilon,h_\varepsilon}(s,y)\right)\right]^2\d s\d y\right\|_2\notag\\
&\rightarrow0\,\,\text{as}\,\,N\rightarrow\infty.
\end{align}
By \eqref{44EST_BN1}, \eqref{44EST_BN2} and \eqref{44EST_BN3}, we obtain \eqref{Limit_sigmaN}.

The proof is complete.
\end{proof}
\section{Well-posedness for the skeleton equation \eqref{ske0}}\label{}
In this section, we prove the well-posedness of the skeleton equation  \eqref{ske0}, which admits the following integral form:  for $(t,x)\in (0,\infty)\times \mathbb{R}$ and $h\in\mathcal{H}_M$,
\begin{align}\label{ske1}
	  	u^h(t,x)=& (p_t*u_0)(x)+\int^t_0\int_{\mathbb{R}}p_{t-s}(y-x)b\left(s,u^h(s,y)\right)\d s\d y\notag\\
&+\int^t_0\int_{\mathbb{R}}p_{t-s}(y-x)\sigma\left(s,u^h(s,y)\right)h(s,y)\d s\d y.
\end{align}
Similarly  to Dalang \cite{Dal999} and Walsh \cite{Walsh1986},
by the Picard iteration method, there exists a unique solution to the following equation: for $(t,x)\in (0,\infty)\times \mathbb{R}$,
\begin{align}\label{ske1_N}
	  	u^h_N(t,x)=& (p_t*u_0)(x)+\int^t_0\int_{\mathbb{R}}p_{t-s}(y-x)b_N\left(s,u^h_N(s,y)\right)\d s\d y\notag\\
&+\int^t_0\int_{\mathbb{R}}p_{t-s}(y-x)\sigma_N\left(s,u^h_N(s,y)\right)h(s,y)\d s\d y.
\end{align}
Moreover, the solution is subject to this condition:
\begin{align}
\sup_{t\in[0,T]}\sup_{x\in  \mathbb{R}}\left|u^h_N(t,x)\right|<\infty
\end{align}
for all $N,T>0$. Using the same technique as in the proof of Lemmas \ref{lemma_bound1} and \ref{lemma_bound2}, we can obtain the following results. Assume that Hypotheses \ref{cond-initial}, \ref{cond-dif} and \ref{3} hold. If $L_\sigma>0$, then
 \begin{align}\label{remark1}
\sup_{N>0}\sup_{x\in  \mathbb{R}}\left|u^h_N(t,x)\right|\leq 4\left(\|u_0\|_{L^\infty(\mathbb{R})}+1\right)\cdot e^{8M^4L^4_{\sigma}t}
\end{align}
uniformly for all $t>0$ and for each fixed $M\geq L_b^{1/4}L^{-1}_{\sigma}$.  If $\sigma:(0,\infty)\times \mathbb{R}\rightarrow \mathbb{R}$ is bounded, then
\begin{align}\label{remark2}
\sup_{N>0}\sup_{x\in  \mathbb{R}}\left|u^h_N(t,x)\right|\leq 2Me^{2L_bt}\left(\|u_0\|_{L^\infty(\mathbb{R})} +\|\sigma\|_{L^\infty(\mathbb{R}_+\times\mathbb{R})}t^{\frac{1}{4}}+1\right).
\end{align}
uniformly for all $t>0$ and and for each fixed $M\geq 1$.

\begin{proposition}\label{Propo well posed SE 01}
Assume that Hypotheses \ref{cond-initial}, \ref{cond-dif} and \ref{3} hold. For any $h\in\mathcal{H}_M$, there exists a unique solution $u^h$ to  the skeleton equation \eqref{ske0}. Moreover, $\sup_{t \in [0,T]}\sup_{x\in \R} | u^{h}(t\,,x)| < \infty$ for all $T>0$.
\end{proposition}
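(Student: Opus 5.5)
The plan is to mirror the deterministic analogue of the truncation argument in the proof of Lemma \ref{lemma_EST_uhe}, but in a much simpler form: since the skeleton equation is deterministic, the a priori bounds \eqref{remark1} and \eqref{remark2} are already sup-norm bounds, uniform in $N$. Fix $T>0$ and $h\in\mathcal H_M$, and let $u^h_N$ be the unique bounded solution of \eqref{ske1_N}. By \eqref{remark1} (if $L_\sigma>0$) or \eqref{remark2} (if $\sigma$ is bounded), there is a constant $R_T<\infty$, independent of $N$, such that $\sup_{N>0}\sup_{t\in[0,T]}\sup_{x\in\R}|u^h_N(t,x)|\le R_T$. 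We may take $M$ larger if necessary to satisfy the constraints $M\ge L_b^{1/4}L_\sigma^{-1}$ or $M\ge1$, since $\mathcal H_M\subset\mathcal H_{M'}$ for $M'\ge M$.

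\textbf{Existence.} Choose $N_T$ with $e^{N_T}\ge R_T$. For $N\ge N_T$ and $|z|\le R_T$ we have $b_N(t,z)=b(t,z)$ and $\sigma_N(t,z)=\sigma(t,z)$ by \eqref{Def_psi_N}. Since $|u^h_N|\le R_T$ on $[0,T]\times\R$, the function $u^h_N$ therefore solves the untruncated equation \eqref{ske1} on $[0,T]\times\R$. This gives existence on $[0,T]$ together with the bound $\sup_{[0,T]\times\R}|u^h|\le R_T$. Moreover, for $N'\ge N\ge N_T$ both $u^h_N$ and $u^h_{N'}$ solve \eqref{ske1_N} with index $N'$ and are bounded, so they coincide by uniqueness for the globally Lipschitz truncated equation. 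Hence the construction is consistent across $N$ and across $T$, which yields a solution on $(0,\infty)\times\R$ that is bounded on each strip $[0,T]\times\R$. The main point to check here is that the constant in \eqref{remark1} or \eqref{remark2} is genuinely independent of $N$. This holds because the linear growth constants $L_b$ and $L_\sigma$ of $b_N$ and $\sigma_N$ are uniform in $N$, so the weighted-norm argument of Lemma \ref{lemma_bound1} goes through verbatim with the stochastic term removed.

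\textbf{Uniqueness.} Let $v$ be any solution of \eqref{ske1} with $\sup_{[0,T]\times\R}|v|=:R'<\infty$, and set $n=\max(R_T,R')$. Put $w=u^h-v$ and $\phi(t)=\sup_x|w(t,x)|$. Using $\lip_n(b)$ and $\lip_n(\sigma)$ together with Cauchy--Schwarz in the control term, we get
\begin{align*}
\phi(t)\le \lip_n(b)\int_0^t\phi(s)\,\d s+\lip_n(\sigma)\,M\Big(\int_0^t\frac{\phi(s)^2}{2\sqrt{\pi(t-s)}}\,\d s\Big)^{1/2}.
\end{align*}
Squaring this gives $\phi(t)^2\le C\int_0^t (t-s)^{-1/2}\phi(s)^2\,\d s$ with $C=C(n,M,T)$. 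A fractional Gronwall lemma, or equivalently the weighted norm $\sup_t e^{-\alpha t}\phi(t)$ with $\alpha$ large as in Lemma \ref{lemma_bound1}, then forces $\phi\equiv0$ on $[0,T]$. Since $T$ is arbitrary, uniqueness follows.

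I expect the only delicate step to be the $N$-uniformity of the a priori bound. Once that is in hand, the locally Lipschitz difficulty disappears, because the truncation becomes inactive.
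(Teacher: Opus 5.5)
Your proof is correct. The uniqueness part matches the paper, but your existence argument takes a genuinely different and simpler route.

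\textbf{Existence.} The paper says existence is ``a straightforward consequence of the proof of Lemma \ref{lemma_EST_uhe}'' with $\varepsilon=0$. That means rerunning the Cauchy-sequence argument for the truncated solutions, which is driven by the tail estimates of Lemma \ref{lemma_tail_EST} and the growth condition of Hypothesis \ref{3}. In the deterministic setting this is only formal, since several constants there degenerate at $\varepsilon=0$ (for instance the choice $A=(2^{11}\varepsilon^2L_\sigma^4t)^{-1}$).

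You instead observe that \eqref{remark1}--\eqref{remark2} are already sup-norm bounds, uniform in $N$. So once $e^N\ge R_T$ the truncation is inactive and $u^h_N$ itself solves \eqref{ske1}. This needs no limit passage and no tail estimates, and it never uses Hypothesis \ref{3}: only linear growth and local Lipschitz continuity enter. It is what the paper's limiting scheme collapses to once the tail events are empty, and it also gives the bound $\sup_{[0,T]\times\R}|u^h|\le R_T$ directly.

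\textbf{Uniqueness.} Both arguments localize to a ball on which $b$ and $\sigma$ are Lipschitz, then close with a weighted sup-norm (or fractional Gronwall) estimate. The difference is in how the ball is chosen. The paper applies \eqref{remark1}--\eqref{remark2} to two arbitrary solutions $u^h,v^h$. That tacitly requires them to be bounded, because the absorption step in the weighted norm needs the norm to be finite. You take $n$ from the actual sup-norms and state uniqueness in the class of solutions bounded on $[0,T]\times\R$. This makes explicit the class in which uniqueness is really being proved.

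\textbf{One point to add.} When you rederive the $N$-uniform bound, use the finite-horizon norm $\sup_{t\le T}\sup_x e^{-\alpha t}|u^h_N(t,x)|$. It is finite because $u^h_N$ is bounded on $[0,T]\times\R$, and the absorption step needs exactly that. The all-time supremum used in Lemma \ref{lemma_bound1} is not obviously finite here, and $h$ is only defined on $[0,T]$ anyway.
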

\begin{proof}
We provide the proof details only for the case $L_{\sigma}>0$, as the remaining case is similar.

{\bf Existence.} By a straightforward consequence of the proof of Lemma \ref{lemma_EST_uhe} taking $\e=0$ in \eqref{eqnhe},  there exists a solution $u^{h}$ of the skeleton equation  \eqref{ske0}. Moreover,  for all $T>0$, $\sup_{t \in [0,T]}\sup_{x\in \R} | u^{h}(t\,,x)| < \infty$.
The proof of existence is complete.

{\bf Uniqueness}.
Let $u^h$, $v^h$ be two solutions of the skeleton equation \eqref{ske0} with the same initial condition $u_0$ satisfying Hypothesis \ref{cond-initial}. Let  $b$ and $\sigma$ satisfy Hypotheses \ref{cond-dif} and \ref{3}. Then, we have that for all $t>0$ and $x\in \R$, \begin{align*}
&\left|u^h(t,x)-v^h(t,x)\right|\\
\leq&\left|\int^t_0\int_{\mathbb{R}}p_{t-s}(y-x)\left[b\left(s,u^h(s,y)\right)-b\left(s,v^h(s,y)\right)\right]\d s\d y\right|\\
&+\left|\int^t_0\int_{\mathbb{R}}p_{t-s}(y-x)\left[\sigma\left(s,u^h(s,y)\right)-\sigma\left(s,v^h(s,y)\right)\right]h(s,y)\d s\d y\right|.
\end{align*}
Observe that the moment bounds obtained in  \eqref{remark1} and \eqref{remark2} only use the linear growth constants $L_b$ and $L_{\sigma}$. Therefore, they also hold when $u^h_N$ is replaced by $u^h$ and $v^h$. Thus, we can proceed as in the proof of existence but appeal to the local Lipschitz condition on $b$ and $\sigma$. By \eqref{Define_LNb}, \eqref{remark1} and \eqref{remark2}, there is a constant $R=R\left(T, \|u_0\|_{L^\infty(\mathbb{R})}, L_{\sigma}, L_{b}, M\right)>0$ such that
$$\sup_{t\in[0,T]}\sup_{x\in\R}\left(u^h(t,x)\vee v^h(t,x)\right)\leq R.$$
Since $b$ and $\sigma$ satisfy the local Lipschitz condition (see Hypothesis \ref{cond-dif}), they are Lipschitz on the compact interval $[-R,R]$. There exists a constant $L_R>0$ such that for all $z_1,z_2\in[-R,R]$ and $s\in[0,T]$,
\begin{align*}
|b(s,z_1)-b(s,z_2)|\leq L_R|z_1-z_2|,\qquad
|\sigma(s,z_1)-\sigma(s,z_2)|\leq L_R|z_1-z_2|.
\end{align*}
For all $\alpha>0$, define
\begin{align*}
\|u^h-v^h\|_{\alpha,T}:=\sup_{t\in[0,T]}\sup_{x\in\mathbb{R}}e^{-\alpha t}|u^h(t,x)-v^h(t,x)|.
\end{align*}
 Applying an argument analogous to the proof of \eqref{4EST_I1_3111ADD} yields that for all $T>0$,
\begin{align*}
\left|u^h(t,x)-v^h(t,x)\right|\leq L_R e^{\alpha t}\left[\frac{1}{\alpha}+\frac{M}{2^{\frac{3}{4}}\alpha^{\frac{1}{4}}}\right]\|u^h-v^h\|_{\alpha,T}.
\end{align*}
Divide both sides by $e^{\alpha t}$ and optimize over $(t,x)$ in order to see that
\begin{align}\label{1234}
\|u^h-v^h\|_{\alpha,T}\leq L_R  \left[\frac{1}{\alpha}+\frac{M}{2^{\frac{3}{4}}\alpha^{\frac{1}{4}}}\right]\|u^h-v^h\|_{\alpha,T}.
\end{align}
Since $\lim_{\alpha\to\infty}\bigl(\frac{1}{\alpha}+\frac{M}{2^{3/4}\alpha^{1/4}}\bigr)=0$, for each fixed $M$, we can choose $\alpha$ sufficiently large such that
\begin{align*}
L_R\left(\frac{1}{\alpha}+\frac{M}{2^{3/4}\alpha^{1/4}}\right)\leq\frac{1}{2}.
\end{align*}
Then it follows from \eqref{1234} that $\|u^h-v^h\|_{\alpha,T}\leq\frac{1}{2}\|u^h-v^h\|_{\alpha,T}$, which implies uniqueness.

The proof is complete.
\end{proof}

\section{Verifications of Claims (\textbf{C1}) and (\textbf{C2})}
In this section, Claims (\textbf{C1}) and (\textbf{C2}) are verified separately.
\subsection{Verification of Claim (\textbf{C1}).}
We now verify Claim \textbf{(C1)} using the following proposition.
\begin{proposition}\label{thm Skeleton}   Assume that Hypotheses \ref{cond-initial}, \ref{cond-dif} and \ref{3} hold. The mapping $h:\mathcal H_M\rightarrow u^h\in\mathcal E^\beta([0,T]\times K, \mathbb R)$ is continuous with respect to the weak topology, where $K$ is a compact set in $\mathbb R$, $\beta=(1/4,1/2)$.
\end{proposition}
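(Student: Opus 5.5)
Let $h_n\to h$ weakly in $\mathcal H$ with $\{h_n\}\subset\mathcal H_M$, and write $u_n:=u^{h_n}$, $u:=u^h$. The plan is a compactness-plus-uniqueness argument in three steps.

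\textbf{Step 1: uniform bounds.} By \eqref{remark1}--\eqref{remark2}, which hold for $u^h$ as observed in the proof of Proposition \ref{Propo well posed SE 01}, there is
\[
R=R(T,\|u_0\|_{L^\infty(\R)},L_b,L_\sigma,M)
\]
such that $\sup_n\sup_{[0,T]\times\R}(|u_n|\vee|u|)\le R$. On $[-R,R]$ the coefficients $b,\sigma$ are therefore Lipschitz with some constant $L_R$.

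\textbf{Step 2: uniform H\"older bounds.} Writing $u_n=p_t*u_0+B_n+S_n$, I estimate the increments of $B_n$ and $S_n$. For $S_n$, Cauchy--Schwarz with $\|h_n\|_{\mathcal H}\le M$ and $|\sigma(s,u_n)|\le L_\sigma(1+R)$ give
\begin{align*}
|S_n(t,x)-S_n(t',x')|^2\le M^2L_\sigma^2(1+R)^2\int_0^T\!\!\int_\R\bigl|p_{t-s}(y-x)\mathbf 1_{s<t}-p_{t'-s}(y-x')\mathbf 1_{s<t'}\bigr|^2\d s\d y .
\end{align*}
The standard heat-kernel $L^2$ increment estimates from the Appendix bound the right side by $C(|t-t'|^{1/2}+|x-x'|)$, which gives exponents $(1/4,1/2)$; $B_n$ is smoother. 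The term $p_t*u_0$ is a problem, because $u_0$ is only bounded and measurable, so it is not H\"older up to $t=0$. However, this term does not depend on $n$, so it cancels in $u_n-u$. Hence I work with $v_n:=B_n+S_n$, which is bounded in $C^{(1/4,1/2)}([0,T]\times K)$ uniformly in $n$. Arzel\`a--Ascoli plus the standard interpolation (bounded in $C^{\beta}$ and convergent uniformly implies convergent in $C^{\beta',0}$ for $\beta'<\beta$) then gives relative compactness of $\{v_n\}$ in $\mathcal E^\beta$ for the exponents used there, and even in $C([0,T]\times[-L,L])$ for every $L$.

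\textbf{Step 3: identifying the limit, the main obstacle.} It suffices to show $\sup_{[0,T]\times K}|u_n-u|\to0$. Split
\begin{align*}
u_n-u=&\int_0^t\!\!\int_\R p_{t-s}(y-x)\bigl[b(s,u_n)-b(s,u)\bigr]\d s\d y\\
&+\int_0^t\!\!\int_\R p_{t-s}(y-x)\bigl[\sigma(s,u_n)-\sigma(s,u)\bigr]h_n\d s\d y\\
&+\int_0^t\!\!\int_\R p_{t-s}(y-x)\,\sigma(s,u)(h_n-h)\d s\d y=:J_1+J_2+J_3^n.
\end{align*}
\begin{itemize}
\item $J_3^n(t,x)\to0$ pointwise by weak convergence, since $p_{t-\cdot}(\cdot-x)\sigma(\cdot,u)\mathbf 1_{[0,t]}\in\mathcal H$.
\item $J_3^n$ is equicontinuous (same increment bound as in Step 2), so the pointwise convergence upgrades to uniform convergence on compacts.
\item Since $J_3^n$ is bounded by the sup of an $L^2$ pairing with a Gaussian tail, it can be shown to be small uniformly for $|x|$ large, again using only $\|h_n\|_{\mathcal H}\le M$; this gives $\sup_{[0,T]\times\R}|J_3^n|\to0$.
\end{itemize}
If controlling the sup over all of $\R$ is delicate, I would localize using that $p_{t-s}(y-x)$ for $x\in K$ gives little mass outside a large interval. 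For $J_1+J_2$, use the Lipschitz constant $L_R$ and the weighted norm $\|\cdot\|_{\alpha,T}$ exactly as in the uniqueness proof, \eqref{1234}. Choosing $\alpha$ large gives
\[
\|u_n-u\|_{\alpha,T}\le\tfrac12\|u_n-u\|_{\alpha,T}+\sup_{[0,T]\times\R}|J_3^n| .
\]
Hence $\sup_{[0,T]\times\R}|u_n-u|\le 2e^{\alpha T}\sup|J_3^n|\to0$.

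\textbf{Conclusion.} Combining uniform convergence with the uniform H\"older bound of Step 2 via interpolation gives convergence in $\mathcal E^\beta([0,T]\times K,\R)$. This is cleaner than passing to subsequences, although the subsequence argument with uniqueness from Proposition \ref{Propo well posed SE 01} also works. The step I expect to be hardest is uniformity of $J_3^n\to0$ over the unbounded domain; I would first try the Gaussian-tail-plus-equicontinuity argument above.
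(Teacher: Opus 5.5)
Your Step 3 has a genuine gap. The claim $\sup_{[0,T]\times\R}|J_3^n|\to0$ is false, and so is the conclusion $\sup_{[0,T]\times\R}|u_n-u|\to0$ that your unweighted contraction produces. Weak convergence in $\mathcal H$ lets the mass of $h_n-h$ escape to spatial infinity. The Gaussian decay of $p_{t-s}(\cdot-x)$ is centred at $x$, so it gives nothing when $x$ moves with that mass.

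Concretely, let $\sigma\ge1$ (e.g.\ $\sigma(t,z)=2+\sin z$) and $h=0$. Let $h_n(s,y)=\phi(y-n)$ with $0\le\phi\in L^2(\R)$, $\phi\not\equiv0$ and $\sqrt T\|\phi\|_{L^2(\R)}\le M$. Then $h_n\to0$ weakly in $\mathcal H$, but for every $n$
\[
J_3^n(t,n)=\int_0^t\!\!\int_\R p_{t-s}(z)\,\sigma\bigl(s,u(s,z+n)\bigr)\phi(z)\,\d s\,\d z\ \ge\ \int_0^t\!\!\int_\R p_{t-s}(z)\phi(z)\,\d s\,\d z>0 .
\]
The same happens to $u_n-u$ itself. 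Suppose $u_0\equiv0$, the coefficients do not depend on $t$, and $b(0)=0$. Then $u\equiv0$. By translation invariance and uniqueness, $u_n(t,x)=u^{\phi}(t,x-n)$, where $u^\phi\not\equiv0$ is the solution with control $(s,y)\mapsto\phi(y)$. Hence $\sup_{[0,T]\times\R}|u_n-u|$ is a positive constant.

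The paper's Step 1 asserts the same thing for $I_{21}^n$ in \eqref{Est_I21n}, citing Arzel\`a--Ascoli, so this cannot be imported from it. Your fallback (localizing via the small mass of $p_{t-s}(\cdot-x)$ outside a large interval for $x\in K$) does not close as stated. $J_1$ and $J_2$ at $x\in K$ involve $u_n-u$ at all $y$, which need not be small far out, so a plain cutoff only yields an ever-growing chain of intervals.

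The repair is to run your contraction in a spatially weighted norm. For a fixed $\lambda>0$, set $\|w\|_{\alpha,\lambda}:=\sup_{t\le T,\,x\in\R}e^{-\alpha t-\lambda|x|}|w(t,x)|$. Use $e^{-\lambda|x|}\le e^{-\lambda|y|}e^{\lambda|x-y|}$ together with
\[
\int_\R p_r(z)e^{\lambda|z|}\,\d z\le2e^{\lambda^2r/2},\qquad\int_\R p_r(z)^2e^{2\lambda|z|}\,\d z\le\frac{e^{\lambda^2r}}{\sqrt{\pi r}} .
\]
The computation behind \eqref{1234} then gives $\|J_1+J_2\|_{\alpha,\lambda}\le\bigl(\tfrac{2L_R}{\alpha-\lambda^2/2}+\tfrac{L_RM}{(2\alpha-\lambda^2)^{1/4}}\bigr)\|u_n-u\|_{\alpha,\lambda}\le\tfrac12\|u_n-u\|_{\alpha,\lambda}$ for $\alpha$ large.

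The forcing term is now harmless. By Cauchy--Schwarz, $|J_3^n|\le C$ uniformly, so $e^{-\lambda|x|}|J_3^n|\le Ce^{-\lambda L}$ for $|x|>L$. On $[0,T]\times[-L,L]$, your pointwise-plus-equicontinuity argument gives uniform convergence. Hence $\|J_3^n\|_{\alpha,\lambda}\to0$, and
\[
\sup_{[0,T]\times K}|u_n-u|\le2e^{\alpha T+\lambda\sup_K|x|}\|J_3^n\|_{\alpha,\lambda}\to0 .
\]

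With this change the rest of your argument stands, and it is cleaner than the paper's in two ways. Your Step 2 bounds $v_n$ directly, which gives the uniform H\"older bound for $u_n-u$ by the triangle inequality; the paper instead runs a Gronwall argument on the increments. Your $\alpha$-weighted contraction also correctly absorbs the $(t-s)^{-1/2}$ singularity coming from $\int_\R p_{t-s}^2$, which the paper's $\zeta^n$-Gronwall inequality passes over.
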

\begin{proof}
Let  $\{h, (h_n)_{n\ge1}\}\subset \mathcal H_M$ such that $h_n\rightarrow h$ weakly as $n\rightarrow\infty$.  If we substitute $u^h$ in Eq.\,\eqref{ske1} with $u^{h_n}$, then $u^{h_n}$ is a solution to the resulting new equation.  Moreover, it is the unique solution.  It is sufficient to prove that
\begin{equation}\label{eq convergence}
\lim_{n\rightarrow \infty}\left\|u^{h_n}-u^h\right\|_{\beta,K}=0.
\end{equation}
According to Lemma \ref{Lem 3} in the Appendix, the proof of \eqref{eq convergence} can be divided into two steps:
\begin{itemize}
  \item[(1)]{\bf Pointwise convergence}: For any $(t,x)\in [0,T]\times K$,
 \begin{equation}\label{eq pointwise}
  \lim_{n\rightarrow\infty}\left|u^{h_n}(t,x)-u^h(t,x)  \right| = 0.
\end{equation}
  \item[(2)]{\bf Estimation of the increments}: For any $(t,x), (s,y)\in [0,T]\times K$, there exists a positive constant $C_{5,1}$ such that
  \begin{align}\label{eq increments}
  &\sup_{n\ge1}\left|\left(u^{h_n}(t,x)-u^h(t,x)\right)-\left(u^{h_n}(s,y)-u^h(s,y)\right)   \right|\notag\\
  \le& C_{5,1}\left(|t-s|^{\frac{1}{4}}+|x-y|^{\frac{1}{2}} \right).
  \end{align}
\end{itemize}

 {\bf Step 1. Pointwise convergence}: By Eq.\,\eqref{ske1}, we have that for any $(t,x)\in [0,T]\times K$,
 \begin{align}\label{uhn_uh1}
&u^{h_n}(t,x)-u^h(t,x)\notag\\
=&\int^t_0\int_{\mathbb{R}}p_{t-s}(y-x)\left[b\left(s,u^{h_n}(s,y)\right)-b\left(s,u^h(s,y)\right)\right]\d s\d y\notag\\
&+\int^t_0\int_{\mathbb{R}}p_{t-s}(y-x)\left[\sigma\left(s,u^{h_n}(s,y)\right)h_n(s,y)-\sigma\left(s,u^h(s,y)\right)h(s,y)\right]\d s\d y\notag\\
=:& I_1^n(t,x)+I_2^n(t,x).
\end{align}
The estimates in \eqref{remark1} and \eqref{remark2} only use the linear growth constants $L_b$ and $L_\sigma$. Therefore, they also hold when $u_N^{h_n}$ is replaced by  $u^{h_n}$ and $u^{h}$. Here, let $$R:=\max\left\{4\left(\|u_0\|_{L^\infty(\mathbb{R})}+1\right)e^{8M^4L^4_{\sigma}t},\,\,2 Me^{2L_bt}\left(\|u_0\|_{L^\infty(\mathbb{R})} +\|\sigma\|_{L^\infty(\mathbb{R}_+\times\mathbb{R})}t^{\frac{1}{4}}+1\right) \right\}.$$
It follows that \begin{align}\label{NEW551}
b\left(s,u^{h_n}(s,y)\right)=b\left(s,u^{h_n}(s,y)\right){\bf 1}_{\{|u^{h_n}|\leq R\}},\,\, b\left(s,u^{h}(s,y)\right)=b\left(s,u^{h}(s,y)\right){\bf 1}_{\{|u^{h}|\leq R\}}
\end{align}
 for every $s>0$ and $y\in\R$. Recall $L_{N,b}$ and $L_{N,\sigma}$ defined by \eqref{Define_LNb}. By \eqref{NEW551} and the local Lipschitz continuity property of $b$ and $\sigma$ (see Hypothesis \ref{cond-dif}), we have
\begin{align}\label{Lipch_property_Lbsigma}
\left|b\left(s,u^{h_n}(s,y)\right)-b\left(s,u^h(s,y)\right)\right|\leq &L_{N,b}\left|u^{h_n}(s,y)-u^h(s,y)\right|,\notag\\
\left|\sigma\left(s,u^{h_n}(s,y)\right)-\sigma\left(s,u^h(s,y)\right)\right|\leq& L_{N,\sigma}\left|u^{h_n}(s,y)-u^h(s,y)\right|.
\end{align}
Set $\zeta^n(t):=\sup_{0\leq s \leq t}\sup_{x\in \mathbb{R}}|u^{h_n}(s,x)-u^h(s,x)|^2$. For $I_1^n(t,x)$,  by \eqref{Lipch_property_Lbsigma}, we have
\begin{align}\label{Est_I1n}
|I_1^n(t,x)|^2\leq&\int^t_0\int_{\mathbb{R}}p_{t-s}(y-x)\left|b\left(s,u^{h_n}(s,y)\right)-b\left(s,u^h(s,y)\right)\right|^2\d s\d y\notag\\
\leq& L_{N,b}^2\int^t_0\int_{\mathbb{R}}p_{t-s}(y-x)\sup_{0\leq l \leq s}\sup_{y\in \mathbb{R}}\left|u^{h_n}(l,y)-u^h(l,y)\right|^2\d s\d y\notag\\
\leq&C_{5,2}\int^t_0\zeta^n(s)\d s,
\end{align}
where $C_{5,2}=C(T, M, \|u_0\|_{L^{\infty}(\R)}, L_{\sigma}, L_{b})$ is a positive constant.

For $I_2^n(t,x)$, notice that
\begin{align}\label{I2n}
I_2^n(t,x)=&\int^t_0\int_{\mathbb{R}}p_{t-s}(y-x)\sigma\left(s,u^{h}(s,y)\right)\left[h_n(s,y)-h(s,y)\right]\d s\d y\notag\\
&+\int^t_0\int_{\mathbb{R}}p_{t-s}(y-x)\left[\sigma\left(s,u^{h_n}(s,y)\right)-\sigma\left(s,u^h(s,y)\right)\right]h_n(s,y)\d s\d y\notag\\
=:&I_{21}^n(t,x)+I_{22}^n(t,x).
\end{align}
Since $h, h_n \in \mathcal H_M$, by the Cauchy-Schwarz inequality, the linear growth property of $\sigma$, \eqref{remark1} and \eqref{remark2}, we have
\begin{align*}
|I_{21}^n(t,x)|^2\leq&\int^t_0\int_{\mathbb{R}}|p_{t-s}(y-x)|^2\left|\sigma\left(s,u^{h}(s,y)\right)\right|^2dsdy\cdot\int^t_0\int_{\mathbb{R}}|h_n(s,y)-h(s,y)|^2\d s\d y\notag\\
\leq& 2L_{\sigma}^2\left(1+\sup_{s\in[0,T]}\sup_{y\in \mathbb{R}}\left|u^{h}(s,y)\right|^2\right)\int^t_0\int_{\mathbb{R}}|p_{t-s}(y-x)|^2\d s\d y\\
&\quad\cdot\int^t_0\int_{\mathbb{R}}|h_n(s,y)-h(s,y)|^2\d s\d y\notag\\
\leq& C_{5,3}<+\infty,
\end{align*}
where $C_{5,3}=C(M,{L_b},{L_\sigma},T)$ is a positive constant independent of $n,t,x$.

 For  $0\leq t_1<t_2\leq T$,
\begin{align*}
&I_{21}^n(t_1,x_1)-I_{21}^n(t_2,x_2)\\
&=\int_0^{t_1}\!\!\int_{\mathbb{R}}\left[p_{t_1-s}(y-x_1)-p_{t_2-s}(y-x_2)\right]\,\sigma\left(s,u^h(s,y)\right)\left[h_n(s,y)-h(s,y)\right]dsdy\\
&\quad-\int_{t_1}^{t_2}\!\!\int_{\mathbb{R}}p_{t_2-s}(y-x_2)\,\sigma\left(s,u^h(s,y)\right)\left[h_n(s,y)-h(s,y)\right] dsdy\\
&=:J_1+J_2.
\end{align*}
For $J_1$, by the linear growth property of $\sigma$ and the Cauchy-Schwarz inequality, we have
\begin{align*}
|J_1|&\leq L_\sigma\left(\int_0^{t_1}\!\!\int_{\mathbb{R}}|p_{t_1-s}(y-x_1)-p_{t_2-s}(y-x_2)|^2\,dsdy\right)^{1/2}\cdot \left(\int_0^{t_1}\!\!\int_{\mathbb{R}}[h_n(s,y)-h(s,y)]^2\,dsdy\right)^{1/2}\\
&\leq 2M\cdot L_\sigma\left(\int_0^{t_1}\!\!\int_{\mathbb{R}}|p_{t_1-s}(y-x_1)-p_{t_2-s}(y-x_2)|^2\,dsdy\right)^{1/2}.
\end{align*}
By Lemma \ref{lem holder}, we have
\begin{align*}
\int_0^{t_1}\!\!\int_{\mathbb{R}}\left|p_{t_1-s}(y-x_1)-p_{t_2-s}(y-x_2)\right|^2\,dsdy
\leq\max\{C_{6,1},C_{6,3}\}\bigl(|t_1-t_2|^{1/2}+|x_1-x_2|\bigr).
\end{align*}
Hence
\begin{align}\label{Ji1}
|J_1|\leq 2M\cdot L_\sigma\cdot \sqrt{\left(\max\{C_{6,1},C_{6,3}\}\right)}\,\bigl(|t_1-t_2|^{1/4}+|x_1-x_2|^{1/2}\bigr).
\end{align}
For $J_2$,  by the linear growth property of $\sigma$, the Cauchy-Schwarz inequality and Lemma \ref{lem holder}, we have
\begin{equation}\label{Ji1}
\begin{split}
|J_2|&\leq L_\sigma\left(\int_{t_1}^{t_2}\!\!\int_{\mathbb{R}}p_{t_2-s}^2(y-x_2)\,dsdy\right)^{1/2}\cdot \left(\int_0^{t_1}\!\!\int_{\mathbb{R}}[h_n(s,y)-h(s,y)]^2\,dsdy\right)^{1/2}\\
&\leq 2M \cdot L_\sigma \left(\int_{t_1}^{t_2}\!\!\int_{\mathbb{R}}p_{t_2-s}^2(y-x_2)\,dsdy\right)^{1/2}\\
&\leq 2M \cdot L_\sigma \cdot \sqrt{C_{6,2}}\cdot|t_2-t_1|^{1/4}.
\end{split}
\end{equation}
Thus $J_2$ is controlled by $|t_2-t_1|^{1/4}$, uniformly in $n$. Combining the estimates \eqref{Ji1} and \eqref{Ji1}, we obtain that
\begin{align*}
|I_{21}^n(t_1,x_1)-I_{21}^n(t_2,x_2)|
\leq C_{5,4}\bigl(|t_1-t_2|^{1/4}+|x_1-x_2|^{1/2}\bigr),
\end{align*}
where $C_{5,4}=C_{5,4}\left( L_\sigma, M, C_{6,1}, C_{6,2}, C_{6,3}\right)$ is independent of $n$. Therefore $\{I_{21}^n\}$ is equicontinuous. Since $h\rightarrow h_n$ weakly as $n \rightarrow \infty$ in $\mathcal H$, we know that $I_{21}^n\rightarrow0$ by the Arzel\'a-Ascoli theorem. This implies that
\begin{align}\label{Est_I21n}
\lim_{n \rightarrow \infty}\sup_{t\in[0,T]}\sup_{x\in \mathbb{R}}\left|I_{21}^n(t,x)\right|=0.
\end{align}

On the other hand,  by the Cauchy-Schwarz inequality, \eqref{remark1}, \eqref{remark2}, \eqref{Lipch_property_Lbsigma} and Lemma \ref{lem holder}, there exists a positive constant $C_{5,5}=C_{5,5}(M,L_{N,\sigma},T)$ such that
\begin{align}\label{Est_I22n}
|I_{22}^n(t,x)|^2\leq&\int^t_0\int_{\mathbb{R}}|p_{t-s}(y-x)|^2\cdot\left|\sigma\left(s,u^{h_n}(s,y)\right)-\sigma\left(s,u^h(s,y)\right)\right|^2\d s\d y\cdot\int^t_0\int_{\mathbb{R}}|h_n(s,y)|^2\d s\d y\notag\\
\leq& M^2 \int^t_0\int_{\mathbb{R}}|p_{t-s}(y-x)|^2\cdot L_{N,\sigma}^2\sup_{0\leq l \leq s}\sup_{y\in \mathbb{R}}\left|u^{h_n}(l,y)-u^h(l,y)\right|^2\d s\d y\notag\\
\leq& C_{5,5}\int^t_0\zeta^n(s)\d s.
\end{align}
Putting \eqref{uhn_uh1}, \eqref{Est_I1n}, \eqref{I2n}  and \eqref{Est_I22n} together, there exists a positive constant $C_{5,6}$ such that
\begin{align*}
\zeta^n(t)\leq C_{5,6} \int^t_0\zeta^n(s)\d s +\sup_{t\in[0,T]}\sup_{x\in \mathbb{R}}\left|I_{21}^n(t,x)\right|^2.
\end{align*}
By the Gronwall inequality and \eqref{Est_I21n}, we have
\begin{align*}
\zeta^n(t)\leq e^{C_{5,6}T}\cdot\sup_{t\in[0,T]}\sup_{x\in \mathbb{R}}\left|I_{21}^n(t,x)\right|\rightarrow 0\,\,\,\text{as}\,\,n\rightarrow\infty,
\end{align*}
which implies \eqref{eq pointwise}.

{\bf Step 2. Estimation of the increments}: By Eq.\,\eqref{ske1}, we have that for any $(t,x), (s,y)\in [0,T]\times K$,
  \begin{align}\label{A1234n}
  &\left(u^{h_n}(t+s,x+y)-u^h(t+s,x+y)\right)-\left(u^{h_n}(t,x)-u^h(t,x)\right)\notag\\
  =&\Bigg[\int^{t+s}_0\int_{\mathbb{R}}p_{t+s-l}(z-x-y)\left[b\left(l,u^{h_n}(l,z)\right)-b\left(l,u^h(l,z)\right)\right]\d l\d z\notag\\
&~~-\int^{t}_0\int_{\mathbb{R}}p_{t-l}(z-x)\left[b\left(l,u^{h_n}(l,z)\right)-b\left(l,u^h(l,z)\right)\right]\d l\d z\Bigg]\notag\\ &+\int^{t}_0\int_{\mathbb{R}}[p_{t+s-l}(z-x-y)-p_{t+s-l}(z-x)]\left[\sigma\left(l,u^{h_n}(l,z)\right)h_n(l,z)-\sigma\left(l,u^h(l,z)\right)h(l,z)\right]\d l\d z\notag\\ &+\int^{t}_0\int_{\mathbb{R}}[p_{t+s-l}(z-x)-p_{t-l}(z-x)]\left[\sigma\left(l,u^{h_n}(l,z)\right)h_n(l,z)-\sigma\left(l,u^h(l,z)\right)h(l,z)\right]\d l\d z\notag\\  &+\int^{t+s}_t\int_{\mathbb{R}}p_{t+s-l}(z-x-y)\left[\sigma\left(l,u^{h_n}(l,z)\right)h_n(l,z)-\sigma\left(l,u^h(l,z)\right)h(l,z)\right]\d l\d z\notag\\
 =:&A_1^n+A_2^n+A_3^n+A_4^n.
  \end{align}
  For $A_2^n$, notice that
  \begin{align}\label{Est_A2n1}
  A_2^n=&\int^{t}_0\int_{\mathbb{R}}\left[p_{t+s-l}(z-x-y)-p_{t+s-l}(z-x)\right]\left[\sigma\left(l,u^{h_n}(l,z)\right)-\sigma\left(l,u^h(l,z)\right)\right]h_n(l,z)\d l\d z\notag\\
  &+\int^{t}_0\int_{\mathbb{R}}\left[p_{t+s-l}(z-x-y)-p_{t+s-l}(z-x)\right]\sigma\left(l,u^h(l,z)\right)\left[h_n(l,z)-h(l,z)\right]\d l\d z\notag\\
  =:&A_{21}^n+A_{22}^n.
  \end{align}
  By the Cauchy-Schwarz inequality, \eqref{Lipch_property_Lbsigma}, the fact $h_n\in \mathcal{H}_M$, Proposition \ref{Propo well posed SE 01} and Lemma \ref{lem holder}, there exists a positive constant $C_{5,7}$ such that
\begin{align}\label{Est_A21n1}
\left|A_{21}^n\right|\leq& \int^{t}_0\int_{\mathbb{R}}\left|p_{t+s-l}(z-x-y)-p_{t+s-l}(z-x)\right|\cdot\left|\sigma\left(l,u^{h_n}(l,z)\right)-\sigma\left(l,u^h(l,z)\right)\right|\cdot\left|h_n(l,z)\right|\d l\d z\notag\\
\leq&L_{N,\sigma}\sup_{t\in[0,T]}\sup_{x\in \mathbb{R}}\left[\left|u^{h_n}(t,x)\right|+\left|u^{h}(t,x)\right|\right]\cdot\left[\int^{t}_0\int_{\mathbb{R}}\left|p_{t+s-l}(z-x-y)-p_{t+s-l}(z-x)\right|^2\d l\d z\right]^{\frac{1}{2}}
\notag\\
&~~~~\cdot\int^{t}_0\left[\int_{\mathbb{R}}\left|h_n(l,z)\right|^2\d l\d z\right]^{\frac{1}{2}}\notag\\
\leq&C_{5,7}L_{N,\sigma}M\left[\int^{t}_0\int_{\mathbb{R}}\left|p_{t+s-l}(z-x-y)-p_{t+s-l}(z-x)\right|^2\d l\d z\right]^{\frac{1}{2}}\notag\\
\leq&C_{5,7}L_{N,\sigma}M|y|^{\frac{1}{2}}.
\end{align}
 By the linear growth property of $\sigma$, \eqref{remark1}, \eqref{remark2} and the fact $h_n, h\in \mathcal{H}_M$, we have
 \begin{align*}
 \sup_{n\geq1}\int^{t}_0\int_{\mathbb{R}}\left|\sigma\left(l,u^{h}(l,z)\right)\right|^2\left|h_n(l,z)-h(l,z)\right|^2\d l\d z<\infty.
 \end{align*}
Thus, by the Cauchy-Schwarz inequality and Lemma \ref{lem holder}, there exists a positive constant $C_{5,8}$ such that
\begin{align}\label{Est_A22n1}
 \left|A_{22}^n\right|\leq C_{5,8}|y|^{\frac{1}{2}}.
 \end{align}
 Putting \eqref{Est_A2n1}, \eqref{Est_A21n1} and \eqref{Est_A22n1} together, we have
 \begin{align}\label{Est_A2n}
 \left|A_{2}^n\right|\leq(C_{5,7}L_{N,\sigma}M+C_{5,8}) |y|^{\frac{1}{2}}.
 \end{align}
 Similarly to \eqref{Est_A21n1} and \eqref{Est_A22n1}, there exist positive constants  $C_{5,9}$  and $C_{5,10}$ such that
 \begin{align}\label{Est_A34n}
 \left|A_{3}^n\right|\leq C_{5,9} |s|^{\frac{1}{4}}, ~~\left|A_{4}^n\right|\leq C_{5,10} |s|^{\frac{1}{4}}.
 \end{align}
 Denote $V_n(t,x)=b\left(t,u^{h_n}(t,x)\right)-b\left(t,u^h(t,x)\right)$ for $x\in [0,T]$ and $x\in \mathbb{R}$.
  By \eqref{Define_LNb}, \eqref{remark1}, \eqref{remark2} and \eqref{Lipch_property_Lbsigma}, we know that
  \begin{align}\label{EST_Vn}
  \sup_{n\geq1}\sup_{t\in[0,T]}\sup_{x\in \mathbb{R}}\left|V_n(t,x)\right|<\infty.
  \end{align}
  For $A_1^n$, by a change of variable, we have
  \begin{align}\label{Est_A111n}
  A_1^n=&\int^{s}_0\int_{\mathbb{R}}p_{t+s-l}(z-x-y)V_n(l,z)\d l\d z\notag\\
  &+\int^{t}_0\int_{\mathbb{R}}p_{t-l}(z-x)\left[V_n(s+l,y+z)-V_n(l,z)\right]\d l\d z.
  \end{align}
  By \eqref{EST_Vn}, there exists a positive constant $C_{5,11}$ such that
  \begin{align}\label{Est_A112n}
  \sup_{n\geq1}\int^{s}_0\int_{\mathbb{R}}p_{t+s-l}(z-x-y)\left|V_n(l,z)\right|\d l\d z\leq C_{5,11}s.
  \end{align}
  By \eqref{Lipch_property_Lbsigma} and the bounds on $u^{h_n}$ and $u^{h}$, we have
  \begin{align}\label{EST_Vn2}
  &\left|V_n(s+l,y+z)-V_n(l,z)\right|\notag\\
  \leq&L_{N,b}\left|\left[u^{h_n}(s+l,y+z)-u^h(s+l,y+z)\right]-\left[u^{h_n}(l,z)-u^h(l,z)\right]\right|\notag\\
   \leq&L_{N,b}\sup_{z\in\mathbb{R}}\left|\left[u^{h_n}(s+l,y+z)-u^h(s+l,y+z)\right]-\left[u^{h_n}(l,z)-u^h(l,z)\right]\right|.
  \end{align}
 Set \begin{align*}
 \phi(l,s,y):=\sup_{z\in \mathbb{R}}\left\{\left[u^{h_n}(s+l,y+z)-u^h(s+l,y+z)\right]-\left[u^{h_n}(l,z)-u^h(l,z)\right]\right\}.
 \end{align*}
 Hence, by \eqref{A1234n}, \eqref{Est_A2n}, \eqref{Est_A34n}, \eqref{Est_A111n}, \eqref{Est_A112n} and \eqref{EST_Vn2}, for $C_{5,12}=\max\{1,C_{5,7}L_{N,\sigma}M+C_{5,8},C_{5,9},C_{5,10}, C_{5,11}\}$, we have
 \begin{align*}
 \phi(l,s,y)\leq C_{5,12}(s+|s|^{\frac{1}{4}}+|y|^{\frac{1}{2}})+L_{N,b}\int^{t}_0\phi(l,s,y)\d l.
 \end{align*}
 Therefore, by the Gronwall inequality, there exists a positive constant $C_{5,13}$ such that
 \begin{align*}
 \sup_{t\in[0,T]}\sup_{x\in \mathbb{R}}\left|\left[u^{h_n}(t+s,x+y)-u^h(t+s,x+y)\right]-\left[u^{h_n}(t,x)-u^h(t,x)\right]\right|\leq C_{5,13} s+|s|^{\frac{1}{4}}+|y|^{\frac{1}{2}}.
 \end{align*}
 The proof is complete.
\end{proof}

\subsection{Verification of Claim (\textbf{C2}).}
Recall $u^{ \varepsilon,h_ \varepsilon}(t,x)$ and  $u^{h_ \varepsilon}(t,x)$ are  solutions to  Eq.\,\eqref{eqnhe} and Eq.\,\eqref{eqnubarhe}, respectively.
Combining Lemma \ref{lemma_EST_uhe} with the Chebyshev inequality, we obtain the following lemma.
\begin{lemma}\label{lemma_tail_Est}
Assume that Hypotheses \ref{cond-initial}, \ref{cond-dif} and \ref{3} hold. Then, there exist positive constants $C_{5,14}$ and $C_{5,15}$ such that
\begin{align*}
\mathbb{P}\left\{\left|u^{ \varepsilon,h_ \varepsilon}(t,x)\right|\geq e^{\frac{1}{\e}}\right\}\leq C_{5,14}e^{-\frac{1}{\e}},~~\mathbb{P}\left\{\left|u^{ h_ \varepsilon}(t,x)\right|\geq e^{\frac{1}{\e}}\right\}\leq C_{5,15}e^{-\frac{1}{\e}}.
\end{align*}
\end{lemma}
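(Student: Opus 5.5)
The plan is to combine the uniform moment bound of Lemma \ref{lemma_EST_uhe} with the Chebyshev (Markov) inequality at the level of first moments, with no further analysis. Fix $T>0$ and $M>0$ and let $\{h_\e\}_{\e>0}\subset\mathcal X_M$. Lemma \ref{lemma_EST_uhe} with $p=2$ gives
\begin{align*}
C_{5,14}:=\sup_{0<\e\leq1}\sup_{t\in[0,T]}\sup_{x\in\R}\mathbb{E}\left[\left|u^{\e,h_\e}(t,x)\right|^2\right]^{\frac12}<\infty ,
\end{align*}
and similarly $C_{5,15}$ for $u^{h_\e}$. In the deterministic case $\mathbb{E}$ is trivial, so $C_{5,15}$ is simply a bound on $\sup_{t\le T}\sup_x|u^{h_\e}(t,x)|$; such a bound is also available from \eqref{remark1}--\eqref{remark2}, which hold for $u^{h_\e}$ itself as observed in the proof of Proposition \ref{Propo well posed SE 01}.

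Markov's inequality applied to $|u^{\e,h_\e}(t,x)|$ then gives
\begin{align*}
\mathbb{P}\left\{\left|u^{\e,h_\e}(t,x)\right|\geq e^{\frac1\e}\right\}\leq e^{-\frac1\e}\,\mathbb{E}\left|u^{\e,h_\e}(t,x)\right|\leq e^{-\frac1\e}\left(\mathbb{E}\left|u^{\e,h_\e}(t,x)\right|^2\right)^{\frac12}\leq C_{5,14}e^{-\frac1\e},
\end{align*}
uniformly in $(t,x)\in[0,T]\times\R$ and $\e\in(0,1]$. The same computation gives the bound for $u^{h_\e}$.

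If one wants a faster decay rate, one can use $p$-th moments instead to obtain $C_pe^{-p/\e}$. This is not needed for the stated bound.

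The main point to watch is that the constants are independent of $\e$. That requires the $\e$-uniformity in Lemma \ref{lemma_EST_uhe}, which holds because the explicit bounds of Lemmas \ref{lemma_bound1}--\ref{lemma_bound2} are monotone in $\e$ and therefore bounded for $\e\le1$. For $\e>1$ the bound is trivial after enlarging the constant, since a probability is at most $1\le e\cdot e^{-1/\e}$.
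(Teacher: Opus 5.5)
Your argument is correct and follows the paper's route exactly: the paper also obtains this lemma by combining the $\e$-uniform moment bound of Lemma \ref{lemma_EST_uhe} with the Chebyshev inequality. One small correction: $u^{h_\e}$ is not deterministic, because $h_\e\in\mathcal X_M$ is a random control. This is harmless, since \eqref{remark1}--\eqref{remark2} hold $\omega$-wise (as $\|h_\e(\omega)\|_{\mathcal H}\le M$ a.s.), and Lemma \ref{lemma_EST_uhe} already covers $u^{h_\e}$ in any case.
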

We now verify Claim \textbf{(C2)} using the following proposition.
\begin{proposition}\label{Propo C2}
Assume that Hypotheses \ref{cond-initial}, \ref{cond-dif}  and  \ref{3} hold. For any $M>0$, $\{h_ \varepsilon\}_{\varepsilon>0}\subset \mathcal{X}_M$ and $\delta>0$,
\begin{align*}
	\lim_{ \varepsilon\to 0} \mathbb{P} \left( d\left(u^{ \varepsilon,h_ \varepsilon},u^{h_ \varepsilon}\right)>\delta  \right)=0,
\end{align*}
where $u^{h_ \varepsilon}:=\mathcal{G}^{0}(\text{Int}(h_{ \varepsilon}))$.
\end{proposition}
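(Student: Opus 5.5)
Set $Z^\e:=u^{\e,h_\e}-u^{h_\e}$. Subtracting \eqref{eqnubarhe} from \eqref{eqnhe} gives
\begin{align*}
Z^\e(t,x)=&\int_0^t\int_{\R}p_{t-s}(y-x)\big[b(s,u^{\e,h_\e})-b(s,u^{h_\e})\big]\d s\d y\\
&+\int_0^t\int_{\R}p_{t-s}(y-x)\big[\sigma(s,u^{\e,h_\e})-\sigma(s,u^{h_\e})\big]h_\e(s,y)\d s\d y\\
&+\sqrt{\e}\int_0^t\int_{\R}p_{t-s}(y-x)\sigma(s,u^{\e,h_\e}(s,y))W(\d s\d y)=:D_1+D_2+D_3 .
\end{align*}
Since $d$ is the $\|\cdot\|_{\beta,K}$ distance on the compact set $[0,T]\times K$, the plan is to show two things. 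First, pointwise moment convergence $\sup_{(t,x)}\E|Z^\e(t,x)|^p\to0$. Second, a uniform-in-$\e$ Kolmogorov increment bound
\[
\E|Z^\e(t,x)-Z^\e(s,y)|^p\le C(|t-s|^{1/2}+|x-y|)^{p/2}
\]
with $C$ independent of $\e\in(0,1]$. Then Kolmogorov's criterion gives tightness of $Z^\e$ in $C^{\beta',0}$ for every $\beta'<(1/4,1/2)$. Together with convergence to $0$ of the finite-dimensional distributions, this yields $Z^\e\to0$ in probability in $\mathcal E^{\beta}$ after the usual adjustment of exponents, as in Lemma \ref{Lem 3} and the proof of Theorem \ref{theorem_regular}. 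The increment bound is routine. The terms $D_1,D_2,D_3$ are treated as in Step~2 of Proposition \ref{thm Skeleton}, using the linear growth of $b,\sigma$, Lemma \ref{lemma_EST_uhe} for moments uniform in $\e$, Lemma \ref{lem holder} for the heat-kernel increments, and BDG for $D_3$. The bound $\|h_\e\|_{\mathcal H}\le M$ handles $D_2$ via Cauchy--Schwarz.

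The core step is the pointwise estimate, and the obstacle is the lack of a global Lipschitz constant. I would localize with the level $R_\e:=e^{1/\e}$, or more flexibly $e^{N}$ with $N=N(\e)\to\infty$ slowly. Let $G_\e(s,y)=\{|u^{\e,h_\e}(s,y)|\le e^{N}\}$. By \eqref{remark1}--\eqref{remark2} the skeleton solution $u^{h_\e}$ is deterministically bounded by a constant $R$ independent of $\e$ and $h_\e$. Hence on $G_\e$ the difference $b(s,u^{\e,h_\e})-b(s,u^{h_\e})$ is bounded by $L_{N,b}|Z^\e|$, and similarly for $\sigma$ with $L_{N,\sigma}$. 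Off $G_\e$ I use the splitting $\|X\mathbf 1_{G^c}\|_k\le\|X\|_{2k}\mathbb P(G^c)^{1/2k}$ together with Lemma \ref{lemma_EST_uhe} and the tail bound of Lemma \ref{lemma_tail_Est} (or Lemma \ref{lemma_tail_EST}). This gives an error term $\mathcal R_\e(N)$ that is superexponentially small in $N$.

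Repeating the weighted-norm argument behind \eqref{4EST_I1_3111ADD} with $\mathcal N_{k,\alpha,T}$ and $\alpha\asymp(k^2\e^2+M^4)L_{N,\sigma}^4$ yields
\[
\mathcal N_{k,\alpha,T}(Z^\e)\le \tfrac12\mathcal N_{k,\alpha,T}(Z^\e)+C\sqrt{k\e}+\mathcal R_\e(N),
\]
where $C\sqrt{k\e}$ bounds $D_3$ via BDG and Lemma \ref{lemma_EST_uhe}. Hence
\[
\sup_{t\le T,x}\|Z^\e(t,x)\|_k\le 2e^{\alpha T}\big(C\sqrt{k\e}+\mathcal R_\e(N)\big).
\]
The delicate point, and the main obstacle, is that $e^{\alpha T}$ grows like $\exp(cL_{N,\sigma}^4T)$. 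I therefore choose $N=N(\e)$ growing so slowly that $L_{N,\sigma}^4T\le\frac14\log(1/\e)$. This is possible by \eqref{LNBinfty} and monotonicity of $N\mapsto L_{N,\sigma}$, taking $N(\e)=\sup\{N:cL_{N,\sigma}^4T\le\frac14\log(1/\e)\}$. Then $e^{\alpha T}\sqrt{\e}\le\e^{1/4}\to0$. The tail term $e^{\alpha T}\mathcal R_\e(N)$ must also vanish. From Lemma \ref{lemma_tail_EST}, $\mathcal R_\e(N)\lesssim\exp(-cN^{3/2}/(k\e))$, which beats $\e^{-1/4}$ as soon as $N\ge1$. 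In the bounded-$\sigma$ case it is doubly exponential. This is exactly where Hypothesis \ref{3} ($L_{N,\sigma}=o(N^{3/8})$) is used, as in \eqref{nwe312}. If this balance fails, the fallback is a stopping-free Gronwall in $\zeta(t)=\sup_{s\le t,x}\|Z^\e(s,x)\|_k^2$ as in Step~1 of Proposition \ref{thm Skeleton}. Either route gives $\sup\|Z^\e\|_k\to0$, and Chebyshev then concludes together with the tightness step.
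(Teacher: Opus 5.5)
Your proposal is correct. Its outer structure is the same as the paper's: pointwise moment convergence, an $\e$-uniform increment bound, then Lemma \ref{Lem 3} or Kolmogorov plus tightness. The difference is in how the pointwise step is localized and closed, and it matters.

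The paper localizes at level $e^{1/\e}$ through the event $A_\e(s,y)$ of \eqref{DEF_Ae}. It controls the bad set with the Chebyshev tail of Lemma \ref{lemma_tail_Est} and closes with Gronwall. On $A_\e(s,y)$, however, $u^{\e,h_\e}$ ranges over $[-e^{1/\e},e^{1/\e}]$, so the relevant Lipschitz constants are $L_{1/\e,b}$ and $L_{1/\e,\sigma}$, which tend to infinity. The resulting Gronwall factor is therefore not uniform in $\e$, and nothing in the paper's argument controls it against $\e^{p/2}$ and $e^{-1/(2\e)}$; the paper absorbs it into unspecified constants.

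You instead keep the level $e^{N}$ free and close with the weighted norm $\mathcal N_{k,\alpha,T}$. You make the price $e^{\alpha T}$, with $\alpha\asymp L_{N,\sigma}^4$, explicit, and balance it against $\sqrt{\e}$ and the tail. This is what makes the estimate genuinely uniform in $\e$. Your increment bound uses only linear growth and $\e$-uniform moments, so no Lipschitz constants enter there either. Your exponents $|t-s|^{p/4}+|x-y|^{p/2}$ are also the right ones (compare \eqref{ueheEstimationoftheincrements}).

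Points to tighten:
\begin{itemize}
\item Your first option $R_\e=e^{1/\e}$, and with it Lemma \ref{lemma_tail_Est}, must be discarded: it forces $N=1/\e$, and then $e^{\alpha T}$ cannot be balanced. What you need is the bound of Lemma \ref{lemma_tail_EST} for $u^{\e,h_\e}$ itself. This holds because the moment bounds of Lemmas \ref{lemma_bound1} and \ref{lemma_bound2} are uniform in $N$ and pass to the $L^k(\Omega)$-limit \eqref{limit}.
\item When $L_\sigma>0$, that tail is $\exp(-cN^{3/2}/(\e\sqrt{T}))$, already exponentially small in $1/\e$ at a fixed large $N$. So a fixed $N$ suffices, and $L_{N,\sigma}=o(N^{3/8})$ plays no role in your balance.
\item In the bounded-$\sigma$ case the tail $\exp(-ce^{2N})$ does not tend to $0$ as $\e\to0$ at fixed $N$, so $N(\e)\to\infty$ is really needed. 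What gives $e^{\alpha T}\mathcal R_\e(N(\e))\to0$ there is $L_{N,\sigma}^4=o(e^{2N})$ from \eqref{new16}, not the $o(N^{3/8})$ condition.
\item For $D_1$ in the increment bound, Lemma \ref{lem holder} is an $L^2$ statement and does not apply directly. Use Minkowski's inequality together with the standard $L^1$ bound $\int_0^T\int_{\R}|p_{t'-s}(y-x')-p_{t-s}(y-x)|\,\d y\,\d s\le C(|t'-t|^{1/2}+|x'-x|)$, with $p_r:=0$ for $r\le0$.
\item The Gronwall ``fallback'' is not an independent route, since it meets the same growth of Lipschitz constants. Your main route does not need it.
\end{itemize}
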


\begin{proof}
It is sufficient to prove that
\begin{align}\label{uhe-uh_bartatak}
\lim_{\e\rightarrow0}\left\|u^{ \varepsilon,h_ \varepsilon}-u^{ h_ \varepsilon}\right\|_{\beta,k}=0.
\end{align}
According to Lemma \ref{Lem 3}, the proof of \eqref{uhe-uh_bartatak} can be divided into two steps: for any $(t,x), (s,y)\in[0,T]\times K$ with $K$ being compact in $\mathbb{R}$,
\begin{itemize}
  \item[(1).] {\bf Pointwise convergence}:
     $$\lim_{\varepsilon\rightarrow 0}\left|u^{ \e,h_ \e}(t,x)-u^{h_ \e}(t,x)\right|=0.$$
  \item[(2).] {\bf Estimation of the increments}: There exists a positive constant $C_{5,16}$ satisfing that
  \begin{align*}
  \sup_{0<\e\leq1}\mathbb{E}\left[\left|\left(u^{ \e,h_ \e}(t,x)-u^{h_ \e}(t,x)\right)-\left(u^{ \e,h_ \e}(s,y)-u^{h_ \e}(s,y)\right)\right|\right]\leq C_{5,16}\left(|t-s|^{\frac{1}{4}}+|x-y|^{\frac{1}{2}}\right).
  \end{align*}
  \end{itemize}

 {\bf Step 1. Pointwise convergence:} By \eqref{eqnhe} and \eqref{eqnubarhe}, for any $(t,x)\in[0,T]\times K$, we have
 \begin{align*}
 &u^{ \e,h_ \e}(t,x)-u^{h_ \e}(t,x)\\
 =&\int_{0}^{t}\int_{\mathbb{R}}p_{t-s}(y-x)\left[b(s,u^{ \e,h_ \e}(s,y))-b(s,u^{h_ \e}(s,y))\right]\d s\d y\\
            &+\sqrt{ \e}\int_{0}^{t}\int_{\mathbb{R}}p_{t-s}(y-x)\sigma\left(s,u^{ \e,h_ \e}(s,y)\right)W(\d s\d y) \\
	&+\int_{0}^{t}\int_{\mathbb{R}}p_{t-s}(y-x)\left[\sigma\left(s,u^{ \e,h_ \e}(s,y)\right)-\sigma\left(s,u^{h_ \e}(s,y)\right)\right]h_ \varepsilon(s,y)\d s\d y.
 \end{align*}
 For any $N, s>0$ and $y\in \mathbb{R}$, set
  \begin{align}\label{DEF_Ae}
  A_{\e}(s,y)=\left\{\omega\in \Omega: \left|u^{ \e,h_ \e}(s,y)\right|<e^{\frac{1}{\e}},~~\left|u^{ h_ \e}(s,y)\right|<e^{\frac{1}{\e}}\right\}.
  \end{align}
 Fix $p\geq2$, we find that, for all $t>0$, $x\in \mathbb{R}$, $\e>0$ and $k\geq1$,
 \begin{align*}
  \mathbb{E}\left[\left|u^{ \e,h_ \e}(t,x)-u^{h_ \e}(t,x)\right|^p\right]\leq C_p\sum_{i=1}^5\mathbb{E}\left[\left|A_i^{ \e}(t,x)\right|^p\right],
  \end{align*}
  where
  \begin{align*}
  A_1^{\e}(t,x)=&\int_{0}^{t}\int_{\mathbb{R}}p_{t-s}(y-x)\left|\left[b\left(s,u^{ \e,h_ \e}(s,y)\right)-b\left(s,u^{h_ \e}(s,y)\right)\right]{\bf 1}_{A_{\e}(s,y)}\right|\d s\d y,\\
  A_2^{\e}(t,x)=&\int_{0}^{t}\int_{\mathbb{R}}p_{t-s}(y-x)\left|\left[b\left(s,u^{ \e,h_ \e}(s,y)\right)-b\left(s,u^{h_ \e}(s,y)\right)\right]{\bf 1}_{\Omega\setminus A_{\e}(s,y)}\right|\d s\d y,\\
  A_3^{\e}(t,x)=&\sqrt{ \e}\int_{0}^{t}\int_{\mathbb{R}}p_{t-s}(y-x)\sigma\left(s,u^{ \e,h_ \e}(s,y)\right)W(\d s\d y), \\
  A_4^{\e}(t,x)=&\int_{0}^{t}\int_{\mathbb{R}}p_{t-s}(y-x)\left|\left[\sigma\left(s,u^{ \e,h_ \e}(s,y)\right)-\sigma\left(s,u^{h_ \e}(s,y)\right)\right]{\bf 1}_{A_{\e}(s,y)}\right|h_ \varepsilon(s,y)\d s\d y,\\
   A_5^{\e}(t,x)=&\int_{0}^{t}\int_{\mathbb{R}}p_{t-s}(y-x)\left|\left[\sigma\left(s,u^{ \e,h_ \e}(s,y)\right)-\sigma\left(s,u^{h_ \e}(s,y)\right)\right]{\bf 1}_{\Omega\setminus A_{\e}(s,y)}\right|h_ \varepsilon(s,y)\d s\d y.
  \end{align*}
  By the local Lipschitz condition on $b$ and $\sigma$, we recall \eqref{Define_LNb} and write that
  \begin{align}\label{Lipbsigmae}
  \left|\left[b\left(s,u^{ \e,h_ \e}(s,y)\right)-b\left(s,u^{h_ \e}(s,y)\right)\right]{\bf 1}_{A_{\e}(s,y)}\right|\leq& L_{N,b}\left|u^{ \e,h_ \e}(s,y))-u^{\e}(s,y)\right|,\notag\\
  \left|\left[\sigma\left(s,u^{ \e,h_ \e}(s,y)\right)-\sigma\left(s,u^{h_ \e}(s,y)\right)\right]{\bf 1}_{A_{\e}(s,y)}\right|\leq& L_{N,\sigma}\left|u^{ \e,h_ \e}(s,y))-u^{\e}(s,y)\right|.
  \end{align}
  For $A_1^{\e}(t,x)$, by \eqref{Lipbsigmae}, we have
  \begin{align}\label{A1e}
  \mathbb{E}\left[\left|A_1^{\e}(t,x)\right|^p\right]\leq&\int_{0}^{t}\int_{\mathbb{R}}p_{t-s}(y-x) \cdot\mathbb{E}\left[\left|\left[b\left(s,u^{ \e,h_ \e}(s,y)\right)-b\left(s,u^{h_ \e}(s,y)\right)\right]{\bf 1}_{A_{\e}(s,y)}\right|^p\right]\d s\d y\notag\\
   \leq&C_p\int_{0}^{t}\sup_{y\in\mathbb{R}}\mathbb{E}\left[\left|u^{ \e,h_ \e}(s,y)-u^{h_ \e}(s,y)\right|^p\right]\d s.
\end{align}

\begin{align}\label{A1e}
  \mathbb{E}\left[\left|A_1^{\e}(t,x)\right|^p\right]\leq&\int_{0}^{t}\int_{\mathbb{R}}p_{t-s}(y-x) \cdot\mathbb{E}\left[\left|\left[b\left(s,u^{ \e,h_ \e}(s,y)\right)-b\left(s,u^{h_ \e}(s,y)\right)\right]{\bf 1}_{A_{\e}(s,y)}\right|^p\right]\d s\d y\notag\\
  \leq&C_p\int_{0}^{t}\sup_{y\in\mathbb{R}}\mathbb{E}\left[\left|u^{ \e,h_ \e}(s,y)-u^{h_ \e}(s,y)\right|^p\right]\d s.
\end{align}

Because $\mathbb{P}\left(\Omega\setminus A_{\e}(s,y)\right)$ is at most $\mathbb{P}\left(|u^{ \e,h_ \e}(s,y)|\geq e^{\frac{1}{\e}}\right)+\mathbb{P}\left(|u^{ h_ \e}(s,y)|\geq e^{\frac{1}{\e}}\right)$. By the linear growth of $b$ and Lemmas \ref{lemma_EST_uhe} and \ref{lemma_tail_Est}, there exists a positive constant $C_{5,17}$ such that
\begin{align}\label{EST_be_1}
&\mathbb{E}\left[\left|\left[b\left(s,u^{ \e,h_ \e}(s,y)\right)-b\left(s,u^{h_ \e}(s,y)\right)\right]{\bf 1}_{\Omega\setminus A_{\e}(s,y)}\right|^p\right]\notag\\
\leq& \left[\mathbb{E}\left[\left|b\left(s,u^{ \e,h_ \e}(s,y)\right)\right|^{2p}\right]^{\frac{1}{2}}+\mathbb{E}\left[\left|b\left(s,u^{ h_ \e}(s,y)\right)\right|^{2p}\right]^{\frac{1}{2}}\right]\cdot\left[1-\mathbb{P}(A_{\e}(s,y))\right]^{\frac{1}{2}}\notag\\
\leq& C_{5,16}e^{-\frac{1}{2\e}}.
\end{align}
For $A_2^{\e}(t,x)$, by \eqref{EST_be_1}, we have
\begin{align}\label{A2e}
  \mathbb{E}\left[\left|A_2^{\e}(t,x)\right|^p\right]\leq&\int_{0}^{t}\int_{\mathbb{R}}p_{t-s}(y-x) \mathbb{E}\left[\left|\left[b\left(s,u^{ \e,h_ \e}(s,y)\right)-b\left(s,u^{h_ \e}(s,y)\right)\right]{\bf 1}_{\Omega\setminus A_{\e}(s,y)}\right|^p\right]\d s\d y\notag\\
  \leq&C_{5,16}\int_{0}^{t}\int_{\mathbb{R}}p_{t-s}(y-x)e^{-\frac{1}{2\e}}\d s\d y\notag\\
   \leq&C_{5,16}\int_{0}^{t}e^{-\frac{1}{2\e}}ds\notag\\
   = &C_{5,16}t e^{-\frac{1}{2\varepsilon}}\rightarrow0~~\text{as}~~\e\rightarrow0.
\end{align}
For $A_3^{\e}(t,x)$, by the Burkholder inequality, the linear growth property of $\sigma$, Lemma \ref{lemma_EST_uhe} and Lemma \ref{lem holder}, there exists a positive constant $C_{5,18}$ such that
\begin{equation}\label{A3e}
  \begin{split}
  \mathbb{E}[|A_3^{\e}(t,x)|^p]\leq&C_p\e^{\frac{p}{2}}\left[\int_{0}^{t}\int_{\mathbb{R}}|p_{t-s}(y-x)|^2|\sigma(s,u^{ \e,h_{\e}}(s,y))|^2\d s\d y\right]^{\frac{p}{2}}\\
  \leq&C_p\e^{\frac{p}{2}}\int_{0}^{t}\int_{\mathbb{R}}\left(1+\sup_{r\in[0,s]}\sup_{y\in\mathbb{R}}\mathbb{E}[|u^{ \varepsilon,h_ \varepsilon}(r,y)|^p]\right)|p_{t-s}(y-x)|^2\d s\d y\\
  \leq&C_{5,18}\e^{\frac{p}{2}}\rightarrow0~~\text{as}~~\e\rightarrow0.
\end{split}
\end{equation}
For $A_4^{\e}(t,x)$, by the Cauchy-Schwarz inequality, the fact $\{h_ \varepsilon\}_{\varepsilon>0}\subset \mathcal{X}_M$, the H\"older inequality and \eqref{Lipbsigmae}, we have
\begin{align*}
\mathbb{E}\left[\left|A_4^{\e}(t,x)\right|^p\right]
\leq&\mathbb{E}\Bigg[\left|\int_{0}^{t}\int_{\mathbb{R}}|p_{t-s}(y-x)|^2\left|\sigma\left(s,u^{ \e,h_ \e}(s,y)\right)-\sigma\left(s,u^{h_ \e}(s,y)\right)\right|^2{\bf 1}_{A_{\e}(s,y)}\d s\d y\right|^{\frac{p}{2}}\\
&~~\cdot\left[\int_{0}^{t}\int_{\mathbb{R}}|h_{\e}(s,y)|^2\d s\d y\right]^{\frac{p}{2}}\Bigg]\\
\leq& M^p\mathbb{E}\Bigg[\left|\int_{0}^{t}\int_{\mathbb{R}}|p_{t-s}(y-x)|^2\left|\sigma\left(s,u^{ \e,h_ \e}(s,y)\right)-\sigma\left(s,u^{h_ \e}(s,y)\right)\right|^2{\bf 1}_{A_{\e}(s,y)}\d s\d y\right|^{\frac{p}{2}}\Bigg]\\
\leq&\left[\int_{0}^{t}\int_{\mathbb{R}}|p_{t-s}(y-x)|^2\d s\d y\right]^{\frac{p-2}{2}}\\
&~~\cdot\mathbb{E}\Bigg[\int_{0}^{t}\int_{\mathbb{R}}|p_{t-s}(y-x)|^2\left|\sigma\left(s,u^{ \e,h_ \e}(s,y)\right)-\sigma\left(s,u^{h_ \e}(s,y)\right)\right|^p{\bf 1}_{A_{\e}(s,y)}\d s\d y\Bigg]\\
\leq&M^pT^{\frac{p-2}{4}}\mathbb{E}\Bigg[\int_{0}^{t}\int_{\mathbb{R}}|p_{t-s}(y-x)|^2\left|\sigma\left(s,u^{ \e,h_ \e}(s,y)\right)-\sigma\left(s,u^{h_ \e}(s,y)\right)\right|^p{\bf 1}_{A_{\e}(s,y)}\d s\d y\Bigg].
\end{align*}
Using a calculation similar to that in \eqref{A1e}, by \eqref{Prop_heatkernel}, there exists a positive constant $C_{5,19}=C(M,p,T)$ such that
\begin{align}\label{A4e}
\mathbb{E}\left[\left|A_4^{\e}(t,x)\right|^p\right]\leq&C_{5,19}\int_{0}^{t}\sup_{y\in\mathbb{R}}\mathbb{E}\left[\left|u^{ \e,h_ \e}(s,y)-u^{h_ \e}(s,y)\right|^p\right]\d s.
\end{align}
For $A_5^{\e}(t,x)$, by the Cauchy-Schwarz inequality and a technique similar to that for $A_2^{\e}(t,x)$,  there exists a positive constant $C_{5,20}$ such that
\begin{align}\label{A5e}
\mathbb{E}\left[\left|A_5^{\e}(t,x)\right|^p\right]\leq C_{5,20}&\int_{0}^{t}e^{-\frac{1}{2\e}}\d s= C_{5,20}t e^{-\frac{1}{2\varepsilon}}\rightarrow0~~\text{as}~~\e\rightarrow0.
\end{align}
Let \begin{align*}
\Phi_{\e}:=\sup_{x\in\mathbb{R}}\mathbb{E}\left[|u^{ \e,h_ \e}(t,x)-u^{h_ \e}(t,x)|^p\right].
\end{align*}
Putting \eqref{A1e}, \eqref{A2e}, \eqref{A3e}, \eqref{A4e} and \eqref{A5e} together, and by the extended version of Gronwall's Lemma (see Lemma 15 in \cite{Dal999}), we have
\begin{align*}
\lim_{\e\rightarrow0}\sup_{x\in\mathbb{R}}\mathbb{E}\left[\left|u^{ \e,h_ \e}(t,x)-u^{h_ \e}(t,x)\right|^p\right]=0.
\end{align*}

{\bf Step 2. Estimation of the increments:} For any $(t',x'),(t,x)\in[0,T]\times K$,
\begin{align*}
&\mathbb{E}\left[\left|\left[u^{ \e,h_ \e}(t',x')-u^{h_ \e}(t',x')\right]-\left[u^{ \e,h_ \e}(t,x)-u^{h_ \e}(t,x)\right]\right|^p\right]\\
\leq&C_p\mathbb{E}\left[\left|u^{ \e,h_ \e}(t',x')-u^{ \e,h_ \e}(t,x)\right|^p\right]+C_p\mathbb{E}\left[\left|u^{h_ \e}(t',x')-u^{h_ \e}(t,x)\right|^p\right].
\end{align*}
We only need to prove that for  any $(t',x')\neq(t,x)\in[0,T]\times K$,
\begin{align}\label{ueheEstimationoftheincrements}
\mathbb{E}\left[\left|u^{ \e,h_ \e}(t',x')-u^{ \e,h_ \e}(t,x)\right|^p\right]\leq&C_p\left[|t'-t|^{\frac{p}{2}}+|x'-x|^{\frac{p}{4}}\right].
\end{align}
Recalling Eq.\,\eqref{eqnhe} and Eq.\,\eqref{eqnubarhe}, we have
\begin{align}\label{uehe_uehe}
&\mathbb{E}\left[\left|u^{ \e,h_ \e}(t',x')-u^{ \e,h_ \e}(t,x)\right|^p\right]\notag\\
\leq&C_p\mathbb{E}\Bigg[\left|\int_{0}^{t'}\int_{\mathbb{R}}p_{t'-s}(y-x')b\left(s,u^{ \e,h_ \e}(s,y)\right)dsdy-\int_{0}^{t}\int_{\mathbb{R}}p_{t-s}(y-x)b\left(s,u^{ \e,h_ \e}(s,y)\right)\d s\d y\right|^p\Bigg]\notag\\
&+C_p\mathbb{E}\Bigg[\Bigg|\sqrt{\e}\int_{0}^{t'}\int_{\mathbb{R}}p_{t'-s}(y-x')\sigma\left(s,u^{ \e,h_ \e}(s,y)\right)W(\d s\d y)\notag\\
&\qquad\qquad-\sqrt{\e}\int_{0}^{t}\int_{\mathbb{R}}p_{t-s}(y-x)\sigma\left(s,u^{ \e,h_ \e}(s,y)\right)W(\d s\d y)\Bigg|^p\Bigg]\notag\\
&+C_p\mathbb{E}\Bigg[\Bigg|\int_{0}^{t'}\int_{\mathbb{R}}p_{t'-s}(y-x')\sigma\left(s,u^{ \e,h_ \e}(s,y)\right)h_ {\e}(s,y)\d s\d y\notag\\
&\qquad\qquad-\int_{0}^{t}\int_{\mathbb{R}}p_{t-s}(y-x)\sigma\left(s,u^{ \e,h_ \e}(s,y)\right)h_ {\e}(s,y)\d s\d y\Bigg|^p\Bigg]\notag\\
=:&C_p\left(B_1^\e+B_2^\e+B_3^\e\right).
\end{align}
For $B_1^\e$, by a change of variable ($s=s-(t'-t)$, $y=y-(x'-x)$), we have
\begin{align*}
B_1^\e
=&\mathbb{E}\Bigg[\Bigg|\int_{0}^{t}\int_{\mathbb{R}}p_{t-s}(y-x)\left[b\left(s+t'-t,u^{ \e,h_ \e}(s+t'-t,y+x'-x)\right)-b\left(s,u^{ \e,h_ \e}(s,y)\right)\right]\d s\d y\\
&\qquad+\int_{0}^{t'-t}\int_{\mathbb{R}}p_{t'-s}(y-x')b\left(s,u^{ \e,h_ \e}(s,y)\right)(s,y)\d s\d y\Bigg|^p\Bigg]\\
\leq&C_p\int_{0}^{t}\int_{\mathbb{R}}p_{t-s}(y-x)\cdot\mathbb{E}\left[\left|b\left(s+t'-t,u^{ \e,h_ \e}(s+t'-t,y+x'-x)\right)-b\left(s,u^{ \e,h_ \e}(s,y)\right)\right|^p\right]\d s\d y\\
&+C_p\int_{0}^{t'-t}\int_{\mathbb{R}}p_{t'-s}(y-x')\cdot\mathbb{E}\left[\left|b\left(s,u^{ \e,h_ \e}(s,y)\right)(s,y)\right|^p\right]\d s\d y\\
=:&C_p\left(B_{11}^\e+B_{12}^\e\right).
\end{align*}
By the linear growth property of $b$ and Lemma \ref{lemma_EST_uhe}, there exists a positive constant $C_{5,21}$ such that
\begin{align}\label{B12e}
B_{12}^\e\leq C_{5,21}|t'-t|.
\end{align}
Recall $A_{\e}(s,y)$ defined by \eqref{DEF_Ae}. Thus,
\begin{align*}
B_{11}^\e=&\int_{0}^{t}\int_{\mathbb{R}}p_{t-s}(y-x)
\cdot\mathbb{E}\Bigg[\Bigg|\Bigg[b\left(s+t'-t,u^{ \e,h_ \e}(s+t'-t,y+x'-x)\right)\\
&\qquad\qquad\qquad\quad-b\left(s,u^{ \e,h_ \e}(s,y)\right)\Bigg]{\bf1}_{A_{\e}(s,y)}\Bigg|^p\Bigg]\d s\d y\\
&+\int_{0}^{t}\int_{\mathbb{R}}p_{t-s}(y-x)\cdot\mathbb{E}\Bigg[\Bigg|\Bigg[b\left(s+t'-t,u^{ \e,h_ \e}(s+t'-t,y+x'-x)\right)\\
&\qquad\qquad\qquad\quad-b\left(s,u^{ \e,h_ \e}(s,y)\right)\Bigg]{\bf 1}_{\Omega\setminus A_{\e}(s,y)}\Bigg|^p\Bigg]\d s\d y.
\end{align*}
Following computations similar to those in \eqref{A1e} and \eqref{A2e}, we have
\begin{align}\label{B11e}
B_{11}^\e\leq&C_p\int_{0}^{t}\sup_{y\in\mathbb{R}}\mathbb{E}\left[\left|u^{ \e,h_ \e}(s+t'-t,y+x'-x)-u^{\e,h_ \e}(s,y)\right|^p\right]\d s\notag\\
&+C_p\int_{0}^{t}e^{-\frac{1}{2\e}}\d s.
\end{align}
For $B_2^\e$ and $B_3^\e$, notice that
\begin{align*}
B_{2}^\e=&C_p\mathbb{E}\Bigg[\Bigg|\sqrt{\e}\int_{0}^{t}\int_{\mathbb{R}}[p_{t'-s}(y-x')-p_{t-s}(y-x)]\sigma\left(s,u^{ \e,h_ \e}(s,y)\right)W(\d s\d y)\\
&\qquad\qquad+\sqrt{\e}\int_{t}^{t'}\int_{\mathbb{R}}p_{t'-s}(y-x')\sigma\left(s,u^{ \e,h_ \e}(s,y)\right)W(\d s\d y)\Bigg|^p\Bigg],
\end{align*}
and
\begin{align*}
B_{3}^\e=&C_p\mathbb{E}\Bigg[\Bigg|\int_{0}^{t'}\int_{\mathbb{R}}[p_{t'-s}(y-x')-p_{t-s}(y-x)]\sigma\left(s,u^{ \e,h_ \e}(s,y)\right)h_ {\e}(s,y)\d s\d y\\
&\qquad\qquad+\int_{t}^{t'}\int_{\mathbb{R}}p_{t'-s}(y-x')\sigma\left(s,u^{ \e,h_ \e}(s,y)\right)h_ {\e}(s,y)\d s\d y\Bigg|^p\Bigg].
\end{align*}
Since $\{h_ \varepsilon\}_{\varepsilon>0}\subset \mathcal{X}_M$, by the  Burkholder inequality, the Cauchy-Schwarz inequality, the linear growth property of $\sigma$ and Lemmas \ref{lemma_EST_uhe}  and \ref{lem Holder stochastic}, there exists a positive constant $C_{5,22}$ such that
\begin{align}\label{B2B3}
B_{2}^\e+B_{3}^\e\leq C_{5,22}|t'-t|^{\frac{p}{2}}+|x'-x|^{\frac{p}{4}}.
\end{align}
By \eqref{uehe_uehe}, \eqref{B12e}, \eqref{B11e}, \eqref{B2B3}, the Gronwall lemma and the Kolmogrove continuity criterium, we have \eqref{ueheEstimationoftheincrements}.

The proof is complete.
\end{proof}

\section{Appendix}
In this section, we first present a lemma related to the heat kernel  $p(t,x)$ from \cite{SZ2023}, which plays a key role in the proofs of this paper. Next, we provide a lemma concerning the H\"older regularity of the stochastic integral. Moreover, we give two auxiliary lemmas used in the proofs of Proposition \ref{thm Skeleton} and Proposition \ref{Propo C2} and  Lemma \ref{lemma_EST_uhe}, respectively. Finally, the proof of Theorem \ref{theorem_regular} is presented.

\begin{lemma}\label{lem holder}(\cite[Lemma 3.4]{SZ2023})
\begin{itemize}
  \item[(i)] There exist positive constants $C_{6,1}$ and $C_{6,1}$ such that
  $$
  \int_0^s \int_{\mathbb R}   | p_{t-r}(z)-p_{s-r}(z)|^2\d r\d z\le C_{6,1} |t-s|^{\frac12},
  $$
  and
  $$
  \int_s^{t}\int_{\mathbb R}  | p_{t-r}(z)|^2\d r\d z\le C_{6,2} |t-s|^{\frac12}.
  $$
  \item[(ii)] There exists a positive constant $C_{6,3}$ for any $x,y\in\mathbb R$,
  $$
  \int_0^T\int_{\mathbb R}   | p_{T-s}(x-z)-p_{T-s}(y-z)| ^2\d s\d z\le C_{6,3} |x-y|.
  $$

\end{itemize}

\end{lemma}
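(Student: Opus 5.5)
Since the heat kernel estimates of Lemma \ref{lem holder} are statements about the Gaussian kernel $p_r(z)=(2\pi r)^{-1/2}e^{-z^2/(2r)}$ alone, I will prove them by a direct Fourier (Plancherel) computation in the spatial variable, followed by an elementary integral in time. The key identity is $\hat p_r(\xi)=e^{-r\xi^2/2}$, so that for any $a,b>0$ and $x\in\R$,
\begin{align*}
\int_{\R}|p_a(z)-p_b(z)|^2\,\d z=\frac{1}{2\pi}\int_{\R}\bigl(e^{-a\xi^2/2}-e^{-b\xi^2/2}\bigr)^2\d\xi,\qquad
\int_{\R}|p_a(x-z)-p_a(y-z)|^2\,\d z=\frac{1}{2\pi}\int_{\R}e^{-a\xi^2}\bigl|1-e^{i\xi(x-y)}\bigr|^2\d\xi .
\end{align*}
The second bound in part (i) needs no Fourier analysis: by \eqref{Prop_heatkernel}, $\int_s^t\int_{\R}|p_{t-r}(z)|^2\,\d z\,\d r=\int_0^{t-s}\frac{\d r}{2\sqrt{\pi r}}=\pi^{-1/2}(t-s)^{1/2}$. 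This gives $C_{6,2}=\pi^{-1/2}$.

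For the first bound in (i), assume $s<t$ and write $h=t-s$. Substituting $u=s-r$ and using Plancherel, the quantity becomes
\[
\frac{1}{2\pi}\int_{\R}\int_0^s e^{-u\xi^2}\bigl(1-e^{-h\xi^2/2}\bigr)^2\d u\,\d\xi
\le\frac{1}{2\pi}\int_{\R}\frac{(1-e^{-h\xi^2/2})^2}{\xi^2}\,\d\xi .
\]
Here I have extended the $u$-integral to $(0,\infty)$. Rescaling $\xi=\eta/\sqrt h$ turns the right-hand side into $h^{1/2}\cdot\frac{1}{2\pi}\int_{\R}\eta^{-2}(1-e^{-\eta^2/2})^2\,\d\eta$. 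This last integral is finite: the integrand is bounded by $\eta^2/4$ near $0$ and by $\eta^{-2}$ at infinity. The resulting constant $C_{6,1}$ does not depend on $s$ or $T$.

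For (ii), set $\delta=|x-y|$ and use $|1-e^{i\xi\delta}|^2=2(1-\cos\xi\delta)$. Then
\[
\int_0^T\frac{1}{2\pi}\int_{\R}e^{-(T-s)\xi^2}\,2(1-\cos\xi\delta)\,\d\xi\,\d s
\le\frac1\pi\int_{\R}\frac{1-\cos(\xi\delta)}{\xi^2}\,\d\xi=\frac1\pi\cdot\pi\delta=\delta .
\]
Again I integrate in $s$ and drop $e^{-T\xi^2}$, and the last step uses the classical formula $\int_{\R}(1-\cos a\xi)\xi^{-2}\,\d\xi=\pi|a|$. This yields $C_{6,3}=1$, uniformly in $T$.

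I do not expect a genuine obstacle. The only points requiring care are justifying the Fubini/Plancherel interchange, which is immediate because all integrands are nonnegative and $p_r\in L^1\cap L^2$, and confirming that the bounds are uniform in $s,t\le T$, which holds since the time integrals were extended to infinity. A Fourier-free alternative is also available: expand the squares using $\int p_ap_b=p_{a+b}(0)$ and $\int p_a(x-z)p_a(y-z)\,\d z=p_{2a}(x-y)$, then bound the resulting explicit time integrals. The Fourier route is cleaner, so I will use it.
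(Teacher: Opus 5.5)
Your proof is correct. The paper gives no argument of its own for this lemma; it imports it from \cite[Lemma 3.4]{SZ2023}, so your Plancherel computation is a self-contained replacement rather than a different version of an existing proof. The individual steps all check out:
\begin{itemize}
\item $\hat p_r(\xi)=e^{-r\xi^2/2}$, and the substitution $u=s-r$ gives the symbol $e^{-u\xi^2}(1-e^{-h\xi^2/2})^2$.
\item Extending the time integrals to $(0,\infty)$ produces the factor $\xi^{-2}$.
\item $\int_{\R}(1-\cos a\xi)\,\xi^{-2}\,\d\xi=\pi|a|$ gives $C_{6,3}=1$, which is exactly the value of the $T=\infty$ integral.
\item The rescaled integral in (i) can be evaluated via $\int_{\R}(1-e^{-c\eta^2})\eta^{-2}\,\d\eta=2\sqrt{\pi c}$, giving $C_{6,1}=(\sqrt2-1)/\sqrt\pi$.
\end{itemize}

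Compared with the bare citation, your route gives the paper two things it actually relies on in the proof of Proposition \ref{thm Skeleton}.

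First, your constants do not depend on $s$, $t$ or $T$. This matters because (ii) is applied there with the horizon $T$ replaced by varying times such as $t+s$, and with the time integral taken over a subinterval only.

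Second, the same identity handles the mixed increment $\int_0^{t_1}\!\int_{\R}|p_{t_1-s}(y-x_1)-p_{t_2-s}(y-x_2)|^2\,\d s\,\d y$, which the paper bounds by $\max\{C_{6,1},C_{6,3}\}(|t_1-t_2|^{1/2}+|x_1-x_2|)$. With $h=t_2-t_1$ and $\delta=|x_1-x_2|$, the symbol splits as $e^{-u\xi^2}\bigl[(1-e^{-h\xi^2/2})^2+2e^{-h\xi^2/2}(1-\cos\xi\delta)\bigr]$. This yields $C_{6,1}h^{1/2}+\delta$ directly. Deducing the same bound from (i) and (ii) as stated, via $|a+b|^2\le 2|a|^2+2|b|^2$, would cost a factor $2$.

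The semigroup alternative you mention, $\int_{\R}p_ap_b=p_{a+b}(0)$, works equally well and gives the same constants; for instance $\int_0^\infty 2\bigl(p_{2u}(0)-p_{2u}(\delta)\bigr)\,\d u=\delta$. The Fourier version is simply shorter.
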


The next lemma is about the H\"older regularity of the stochastic integral (see \cite[Proposition 3.2]{BEM10}). For a given predictable random field $V$, set
$$U(t,x):=\sum_{i\geq1}\int_0^t\int_{\R}|p_{t-s}(x-y)V(s,y)|^2\d y\d \beta_i(s),$$
where $\beta_i(t)$ is  independent standard Brownian motions defined by \eqref{230413.1626}.
\begin{lemma}(\cite[Proposition 3.2]{BEM10})\label{lem Holder stochastic} Assume that $\sup_{0\le t\le T}\sup_{x\in\mathbb R}\mathbb E(|V(t,x)|^p)$ is finite for some $p$ large enough. Then,  we have
\begin{itemize}
  \item[(i)] For each $x\in\mathbb R$ a.s., the mapping $t\mapsto U(t,x)$ is  $\beta_1$-H\"older continuous for $0<\beta_1<1/4$.
  \item[(ii)] For each $t\in[0,T]$ a.s., the mapping $x\mapsto  U(t,x)$ is $\beta_2$-H\"older continuous for $0<\beta_2< 1/2$.
\end{itemize}
\end{lemma}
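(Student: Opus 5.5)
The statement is Lemma \ref{lem Holder stochastic}, the H\"older regularity of $U(t,x)=\int_0^t\int_{\R}p_{t-s}(x-y)V(s,y)\,W(\d s\d y)$. I read the displayed definition as this Walsh integral written through the Brownian motions $\beta_i$ of \eqref{230413.1626}. I would prove it by the standard route: $L^p$ increment bounds from the Burkholder--Davis--Gundy inequality, then the Kolmogorov continuity criterion.

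\textbf{Time increments.} Fix $x$ and take $0\le s<t\le T$. Split
\[
U(t,x)-U(s,x)=\int_0^s\!\!\int_{\R}\bigl[p_{t-r}(x-y)-p_{s-r}(x-y)\bigr]V(r,y)\,W(\d r\d y)+\int_s^t\!\!\int_{\R}p_{t-r}(x-y)V(r,y)\,W(\d r\d y).
\]
For each piece, apply the BDG inequality in the form $\|\int\!\int G\,V\,\d W\|_p^2\le c_p\int\!\int G^2\|V\|_p^2$, exactly as in \eqref{Bounded2111}; this rests on Minkowski's inequality in $L^{p/2}(\Omega)$. Then use $K_p:=\sup_{r\le T,y}\|V(r,y)\|_p<\infty$ to pull out $K_p^2$. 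Lemma \ref{lem holder}(i) bounds the two kernel integrals by $C_{6,1}|t-s|^{1/2}$ and $C_{6,2}|t-s|^{1/2}$. Hence
\[
\E|U(t,x)-U(s,x)|^p\le C_pK_p^p|t-s|^{p/4}.
\]

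\textbf{Space increments.} For fixed $t$, the same BDG step combined with Lemma \ref{lem holder}(ii) gives
\[
\E|U(t,x)-U(t,z)|^p\le C_pK_p^p|x-z|^{p/2}.
\]
(Lemma \ref{lem holder}(ii) is stated with upper limit $T$, but it applies verbatim with $t$ in place of $T$.)

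\textbf{Kolmogorov criterion.} Apply the one-parameter Kolmogorov criterion to $t\mapsto U(t,x)$, with exponent $p/4=1+(p/4-1)$. This gives a continuous modification that is H\"older of every order $<(p/4-1)/p=1/4-1/p$. Since $p$ can be taken large, every $\beta_1<1/4$ is reached. Likewise $x\mapsto U(t,x)$ on compacts is H\"older of every order $<1/2-1/p$, so every $\beta_2<1/2$ is reached. This is why the hypothesis requires $p$ large enough: for a target pair $(\beta_1,\beta_2)$ it suffices to have $p>\max\{1/(1/4-\beta_1),\,1/(1/2-\beta_2)\}$.

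\textbf{Main difficulty.} The one genuinely delicate point is justifying the BDG/Minkowski step uniformly, i.e.\ exchanging the $L^p(\Omega)$ norm with the $\d r\,\d y$ integral. Once that step is in place, the heat-kernel integrals are supplied entirely by Lemma \ref{lem holder}. Combining the two increment bounds through the multiparameter Kolmogorov criterion also yields joint H\"older continuity on $[0,T]\times K$ with exponents $(\beta_1,\beta_2)$, which is what will be used later.
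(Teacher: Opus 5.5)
Your argument is correct and is the standard one behind the cited result; the paper gives no proof of its own, only the reference to \cite[Proposition 3.2]{BEM10}. That argument is BDG plus Minkowski, which reduces the $p$-th moments of the increments to $\sup\E|V|^p$ times the heat-kernel integrals of Lemma \ref{lem holder}, giving the exponents $p/4$ in time and $p/2$ in space, followed by Kolmogorov's criterion with $p$ chosen large depending on $(\beta_1,\beta_2)$. Your reading of the garbled displayed definition of $U$ as the Walsh integral $\int_0^t\int_{\R}p_{t-s}(x-y)V(s,y)\,W(\d s\d y)=\sum_i\int_0^t\int_{\R}p_{t-s}(x-y)V(s,y)e_i(y)\,\d y\,\d\beta_i(s)$ is the right one, and it matches how the lemma is invoked in Step 2 of Proposition \ref{Propo C2}.
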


The following lemma is used in the  proof of Propositions \ref{thm Skeleton} and \ref{Propo C2}, with respect to the deterministic version and stochastic version, respectively. The stochastic version of this lemma in \cite[Lemma A.1]{BMS} is proved by the Garsia-Rodemich-Rumsey's lemma. The deterministic case is proved by \cite[Lemma A.4]{LWYZ17}.
\begin{lemma}(\cite[Lemma A.1]{BMS} and \cite[Lemma A.4]{LWYZ17})\label{Lem 3}
Let $K$ be a compact set in $\mathbb R$ and let $\{V^\varepsilon(t,x):(t,x)\in[0,T]\times K\}$ be a family of real-valued functions. Assume that
\begin{itemize}
  \item[(A1).] for any $(t,x)\in[0,T]\times K$,
     $$\lim_{\varepsilon\rightarrow 0}|V^{\varepsilon}(t,x)|=0;$$
  \item[(A2).] there exist $\beta_1,\beta_2>0$ satisfied that for any $(t,x),(t',x')\in[0,T]\times K$,
$$
|V^\varepsilon(t,x)-V^\varepsilon(t',x')|\le C_{6,4}(|t-t'|^{\beta_1}+|x-x'|^{\beta_2}),
$$
where $C_{6,4}$ is a constant independent of $\varepsilon$.
\end{itemize}
Then for any $\theta\in(0,1)$, we have
$$\lim_{\varepsilon\to 0}\|V^\varepsilon\|_{\theta\beta_1,\theta\beta_2}=0. $$
 \end{lemma}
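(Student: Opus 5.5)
The plan is a deterministic interpolation argument; no probability is needed. Write $K_T:=[0,T]\times K$, which is compact. Let $\gamma_i:=\theta\beta_i$ and $\|\cdot\|_{\gamma}$ be the norm \eqref{eq norm} with exponents $(\gamma_1,\gamma_2)$. This norm splits into the sup part and the H\"older seminorm part, and I will show that each tends to zero.

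\textbf{Step 1 (uniform convergence).} By (A2) the family $\{V^\e\}$ is equicontinuous on $K_T$ with a modulus $\omega(\delta)=C_{6,4}(\delta^{\beta_1}+\delta^{\beta_2})$ that does not depend on $\e$. Fix $\eta>0$ and pick $\delta$ with $\omega(\delta)<\eta$. Cover $K_T$ by a finite $\delta$-net $\{z_j\}_{j\le J}$. By (A1) there is $\e_0$ such that $\max_j|V^\e(z_j)|<\eta$ for all $\e<\e_0$. Then
\[
\sup_{K_T}|V^\e|\le \max_j|V^\e(z_j)|+\omega(\delta)<2\eta .
\]
Hence $\sup_{K_T}|V^\e|\to0$. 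This is the Arzel\`a--Ascoli-type step and is routine.

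\textbf{Step 2 (interpolation of the seminorm).} For $(t,x)\ne(t',x')$ in $K_T$ set $D:=|V^\e(t,x)-V^\e(t',x')|$ and $\rho:=|t-t'|^{\gamma_1}+|x-x'|^{\gamma_2}$. Using $D=D^{\theta}D^{1-\theta}$, bound $D^{\theta}$ by (A2) and $D^{1-\theta}$ by $(2\sup_{K_T}|V^\e|)^{1-\theta}$. This gives
\[
D\le C_{6,4}^{\theta}\bigl(|t-t'|^{\beta_1}+|x-x'|^{\beta_2}\bigr)^{\theta}\,\bigl(2\sup_{K_T}|V^\e|\bigr)^{1-\theta}.
\]
Subadditivity of $r\mapsto r^{\theta}$ gives $(a+b)^\theta\le a^\theta+b^\theta$, so the middle factor is at most $\rho$. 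Dividing by $\rho$ and taking the supremum yields
\[
\text{seminorm}_{\gamma}(V^\e)\le C_{6,4}^{\theta}\,2^{1-\theta}\bigl(\sup_{K_T}|V^\e|\bigr)^{1-\theta}\longrightarrow0
\]
by Step 1, since $1-\theta>0$. Adding Step 1 to this bound gives $\|V^\e\|_{\theta\beta_1,\theta\beta_2}\to0$.

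\textbf{Main obstacle.} The argument needs $\theta<1$: the exponent $1-\theta$ is what transfers the uniform convergence to the seminorm. For the stochastic version (as in \cite{BMS}), where (A2) holds only in $L^p$ and (A1) is convergence in probability, I would replace Step 2's pointwise H\"older bound with the Garsia--Rodemich--Rumsey lemma. That lemma gives an a.s. H\"older constant with moments bounded uniformly in $\e$, and the same interpolation then yields convergence in probability. As stated here, though, the lemma is deterministic, so Steps 1--2 suffice.
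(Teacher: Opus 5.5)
Your argument is correct and complete for the statement as written. (A2) gives equicontinuity uniformly in $\varepsilon$, which together with pointwise convergence on a finite net of the compact set $[0,T]\times K$ yields $\sup|V^\varepsilon|\to0$; the interpolation $D=D^{\theta}D^{1-\theta}$ with $(a+b)^{\theta}\le a^{\theta}+b^{\theta}$ then bounds the $(\theta\beta_1,\theta\beta_2)$-seminorm by $C_{6,4}^{\theta}\bigl(2\sup|V^\varepsilon|\bigr)^{1-\theta}$. The paper gives no proof of its own: it defers to Lemma A.4 of \cite{LWYZ17} for this deterministic case and to the Garsia--Rodemich--Rumsey argument of \cite{BMS} for the stochastic version. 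So you have supplied the standard self-contained argument behind the cited result, and your closing remark about the stochastic case matches the paper's comment.
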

The following real-variable lemma from \cite{FKN24} plays an important role to prove Lemma \ref{lemma_EST_uhe}.
\begin{lemma}\label{lemma7}\cite[Lemma 2.5]{FKN24}
Consider a function $f:(0,\infty)\rightarrow(0,\infty)$ and an increasing function $g:(0,\infty)\rightarrow(0,\infty)$. If there exists $\alpha$, $T_0>0$ such that
$$
\sup_{t\in[0,T]}\left[e^{-\alpha t}f(t)\right]\leq e^{-\alpha T}g(T),~~~~~~~~\forall T \in(0,T_0),
$$
then $\sup_{(0,T]}f\leq g(T)$ for every $T \in(0,T_0)$.
\end{lemma}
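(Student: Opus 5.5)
The statement to prove is Lemma \ref{lemma7}, a purely real-variable fact, so the plan is a direct elementary argument with no probabilistic input. Fix $T\in(0,T_0)$. The goal is to show $f(t)\le g(T)$ for every $t\in(0,T]$. The key observation is that the hypothesis can be applied not only at $T$ but at every intermediate time. Since $t\in(0,T]\subset(0,T_0)$, we may use it with $T$ replaced by $t$.

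First I apply the hypothesis at the time $t$ itself. The supremum over $[0,t]$ dominates the value at the endpoint $t$, so
\begin{align*}
e^{-\alpha t}f(t)\le \sup_{r\in[0,t]}\left[e^{-\alpha r}f(r)\right]\le e^{-\alpha t}g(t).
\end{align*}
The factor $e^{-\alpha t}$ is positive and cancels, giving $f(t)\le g(t)$.

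Next I use that $g$ is increasing: since $t\le T$, we get $g(t)\le g(T)$, hence $f(t)\le g(T)$. Taking the supremum over $t\in(0,T]$ yields $\sup_{(0,T]}f\le g(T)$, as claimed.

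There is no genuine obstacle here. The one point to handle carefully is the temptation to apply the hypothesis only at the fixed $T$. That route gives $f(t)\le e^{\alpha(t-T)}g(T)\cdot e^{\alpha(T-t)}\cdot$ and so on, which only yields $f(t)\le e^{\alpha(T-t)}g(T)$ and loses a factor. The trick is to let the terminal time vary and then invoke monotonicity of $g$. I would also note that the value at $t=0$, where $f$ is undefined, plays no role, since only $t\in(0,T]$ matters.
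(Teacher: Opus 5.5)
Your argument is correct: applying the hypothesis with terminal time $t\in(0,T]$ gives $f(t)\le g(t)\le g(T)$. This is the standard proof; the paper itself does not prove the lemma but only cites \cite[Lemma 2.5]{FKN24}, and the assumption there that $g$ is increasing is exactly what your last step uses.

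Your closing remark, however, contains a sign slip. Since $\alpha>0$ and $t\le T$, applying the hypothesis only at the fixed $T$ gives $f(t)\le e^{\alpha(t-T)}g(T)=e^{-\alpha(T-t)}g(T)\le g(T)$. So that route loses nothing, and it does not even need $g$ to be increasing. What your route buys instead is that it works for every real $\alpha$, not just $\alpha\ge 0$.
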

Finally, the proof of Theorem \ref{theorem_regular} proceeds as follows.
\begin{proof}[Proof of Theorem \ref{theorem_regular}]
We claim  that for  any $(t',x')\neq(t,x)\in[0,T]\times K$,
\begin{align}\label{ueEst_theincrements}
\mathbb{E}\left[\left|u^{ \e}(t',x')-u^{ \e}(t,x)\right|^p\right]\leq&C_p\left[|t'-t|^{\frac{p}{2}}+|x'-x|^{\frac{p}{4}}\right].
\end{align}
The above inequality \eqref{ueEst_theincrements} together with Kolmogorov's continuity criterion implies Theorem \ref{theorem_regular}.

It remains to prove \eqref{ueEst_theincrements}. Recalling Eq.\,\eqref{mild_SHE}, notice that
\begin{align*}\label{ue}
&\mathbb{E}\left[\left|u^{ \e}(t',x')-u^{ \e}(t,x)\right|^p\right]\notag\\
\leq&C_p\mathbb{E}\Bigg[\left|\int_{0}^{t'}\int_{\mathbb{R}}p_{t'-s}(y-x')b\left(s,u^{ \e}(s,y)\right)\d s\d y-\int_{0}^{t}\int_{\mathbb{R}}p_{t-s}(y-x)b\left(s,u^{ \e}(s,y)\right)\d s\d y\right|^p\Bigg]\notag\\
&+C_p\mathbb{E}\Bigg[\Bigg|\sqrt{\e}\int_{0}^{t'}\int_{\mathbb{R}}p_{t'-s}(y-x')\sigma\left(s,u^{ \e}(s,y)\right)W(\d s\d y)\notag\\
&\qquad\qquad-\sqrt{\e}\int_{0}^{t}\int_{\mathbb{R}}p_{t-s}(y-x)\sigma\left(s,u^{ \e}(s,y)\right)W(\d s\d y)\Bigg|^p\Bigg]\notag\\
=:&C_p\left(D_1^\e+D_2^\e\right).
\end{align*}
Compared with \eqref{uehe_uehe}, $D_1^\e$ and $D_2^\e$ become $B_1^\e$ and $B_2^\e$, respectively, with $u^{ \e,h_ \e}$ replaced by $u^{ \e}$. Using the same technique as in Step 2 of the proof of Proposition \ref{Propo C2}, the proof is complete.
\end{proof}

\end{document}